\begin{document}
\maketitle\vspace{-6mm}

\begin{abstract}
{\it We show that all knots up to $6$ crossings can be represented by polynomial knots of degree at most $7$, among which except for $5_2, 5_2^*, 6_1, 6_1^*, 6_2, 6_2^*$ and $6_3$ all are in their minimal degree representation. We provide concrete polynomial representation of all these knots. Durfee and O'Shea had asked a question: Is there any $5$ crossing knot in degree $6$? In this paper we try to partially answer this question. For an integer $d\geq2$, we define a set $\pdt$ to be the set of all polynomial knots given by $t\mapsto\fght$ such that $\deg(f)=d-2,\,\deg(g)=d-1$\hskip1.5mm and\hskip1.5mm$\deg(h)=d$. This set can be identified with a subset of $\rtd$ and thus it is equipped with the natural topology which comes from the usual topology $\rtd$. In this paper we determine a lower bound on the number of  path components of $\pdt$ for $d\leq 7$. We define path equivalence between polynomial knots in the space $\pdt$ and show that path equivalence is stronger than the topological equivalence.}

\vskip3mm\noindent {\bf Keywords:}{ double points, crossing data, path equivalence} \\
\noindent {\bf AMS Subject Classification: 57M25, 57Q45}.

\noindent\textit{2000 Mathematics Subject Classification: Primary 57M25; Secondary 14P25}.
\end{abstract}

\section{Introduction}\label{sec1}

\noindent\hskip6mm The idea of representing a long knot by polynomial embeddings was discussed by Arnold \cite{va1}. Later as an attempt to settle a long lasting conjecture of Abhyankar \cite{ssa} in algebraic geometry Shastri \cite{ars} proved that every long knot is ambient isotopic to an embedding given by $t\mapsto\fght$ where $f, g$ and $h$ are real polynomials. These kind of embeddings are referred as polynomial knots.\vo

In his paper Shastri produced a choice of very simple polynomials $f, g$ and $h$ to represent the {\em trefoil knot} and the {\em figure eight knot}. He was hoping that once there are more examples available to represent various knot types, the conjecture of Abhyankar may be solved. This motivated the study of polynomial knots in a more rigorous and constructive manner. Explicit examples were constructed to represent a few classes of knots such as {\em torus knots} (see \cite{rm3} and \cite{rs2}) and {\em two bridge knots} (see \cite{pm1} and \cite{pm4}). To make the polynomials as simple as possible the notions of degree sequence and the minimal degree sequence were introduced. The minimality was with respect to the lexicographic order in $\mathbb{N}^3$. In this respect minimizing a degree sequence of a knot became a concern.\vo

Around the same time Vassiliev \cite{va2} studied and discussed the topology of the space $\vd$ for $d\in\mathbb{N}$, where $\vd$ is the space (with a natural topology coming from $\mathbb{R}^{3d-3}$) of all polynomial knots $t\mapsto\fght$ such that $f, g$ and $h$ are monic polynomials, each of degree $d$ without constant terms. Later, Durfee and O'Shea \cite{do} studied the space $\kd$ for $d\in\mathbb{N}$, where the space $\kd$ is the space of all polynomial knots $t\mapsto\fght$ such that the highest degree among the degrees of $f, g$ and $h$ is exactly equal to $d$. For a nontrivial polynomial knot in $\kd$, by composing it with a suitable orientation preserving linear transformation, we get a polynomial knot $\phi=\fgh$ such that all the component polynomials have degree $d$. On the other hand if $\phi=(f,g,h)$ be a polynomial knot with $f,g,h$ have same degree $d$, by composing $\phi$ with a linear transformation of the form $\xyz\mapsto (x-\alpha z, y-\beta z, z)$ ($\alpha$ and $\beta$ being some suitable real numbers), we get a polynomial knot $t\mapsto\big(f_1(t), g_1(t), h(t)\big)$ with $\deg(f_1)$ and $\deg(g_1)$ being at most $d-1$, which by further composing with a linear transformation of the type $(x,y,z)\mapsto (x-\gamma y, y, z)$ gives a polynomial knot $t\mapsto\big(f_2(t), g_1(t), h(t)\big)$ with $\deg(f_2)$ at most $d-2$. These transformations are orientation preserving and hence the new polynomial knot obtained upon the compositions is topologically equivalent to the old one. Thus, if a nontrivial polynomial knot belongs to the space $\kd$, for $d\geq1$, it is equivalent to a polynomial knot with degree sequence $(d_1,d_2,d_3)$ such that $d_1<d_2<d_3\leq d$.\vo 

For a particular knot type, determining a polynomial representation with least degree is still an unsolved problem.  Another important question that can be asked is: given any positive integer $d$ how many knots can be realized as a polynomial knot in degree $d$? It can be seen that for $d\leq 4$ there is only one knot, namely the unknot. There are three nonequivalent knots that can be realized for $d=5$, namely the unknot, the right hand trefoil and the left hand trefoil. Note that if a knot is realized in degree $d$, then it can be realized in degrees higher than $d.$ For degree $6$ we found an additional knot, namely the figure eight knot. In this connection Durfee and O'Shea asked: are there any $5$ crossing knot in degree $6$?  We note that there are only two knots with $5$ crossings denoted as $5_1$ and $5_2$ in the Rolfsen's table. Using a knot invariant known as {\em superbridge index}, we can prove that $5_1$ can not be represented in degree $6$. For $5_2$ knot the superbridge index is not known. We show that there exists a projection of $5_2$ knot given by $t\mapsto\fgt$ with $\deg(f)=4$ and $\deg(g)=5$
 but there is no generic choice of coefficients of the polynomials $f$ and $g$ such that $5_2$ knot has a polynomial representation in degree $6$. We conjecture that there are no $5$ crossing knots in degree $6$. This will be more clear once the superbridge index of all the possible $3$-superbridge knots in the list of Jin and Jeon \cite{jj1} is completely known. We show that all $5$ crossing knots and all $6$ crossing knots (including the composite knots) are realized in degree $7$. We look at the spaces $\pd$ and $\pdt,$ where $\pd$ is the space of all polynomial knots $t\mapsto\fght$ with $\deg(f)<\deg(g)<\deg(h)\leq d$ and $\pdt$ is the space of of all polynomial knots $t\mapsto\fght$ such that $\deg(f)=d-2,\,\deg(g)=d-1$\hskip1.5mm and\hskip1.5mm$\deg(h)=d$ and define two polynomial knots in these spaces to be path equivalent if they belong to the same path component in that space.\vo

This paper is organized as follows: In section \ref{sec2}, we discuss polynomial knots and introduce the spaces $\pd$ and $\pdt.$   We prove a few relevant results in connection with the polynomial representation of knots with a given crossing information. At the end of section 2, we show that for a generic choice of a regular projection $t\mapsto\fgt$ of the knot $5_2$ with $\deg(f)=4$ and $\deg(g)=5$ there does not exist a polynomial $h$ of degree $6$ such that $t\mapsto\fght$ is its polynomial representation, which partially answers the question asked by Durfee and O'Shea.\vo 

We divide the section 3 into five subsections. In section \ref{sec3.1}, we discuss the topology of the spaces $\pd$ and  $\pdt$ for $d\geq 2$. In section \ref{sec3.2}, we estimate the path components of  $\pd$ and $\pdt$ for $d\leq 4$. Sections \ref{sec3.3}, \ref{sec3.4} and \ref{sec3.5} are devoted towards estimating the path components of $\pdt$ for $d=5,6$ and $7$ respectively. We also provide polynomial knots belonging to each path components for each knot type and at the end of each subsections we summarize the number of path components in the form of a table. We conclude the paper in section \ref{sec4} by mentioning a few remarks for the spaces $\pdt$, for $d>7$ and discussing about how the different topologies on the set $\spk$ of all polynomial knots can be given using different stratification and how they affect the path components of the resulting space.

\section{Polynomial knots}\label{sec2}

\begin{definition}
A {\sf long knot} is a proper smooth embedding $\phi:\ro\to\rt$ such that the map $t\mapsto\left\|\phi(t)\right\|$ is strictly monotone outside some closed interval of the real line and $\left\|\phi(t)\right\|\to\infty$ as $\lvert\hp t\hp\rvert\to\infty$.
\end{definition}

Using the stereographic projection $\pi:\st\setminus\{(0,0,0,1)\}\to\rt$, we can identify the one point compactification of $\rt$ with $\st$. Thus, by this identification, any long knot $\phi:\ro\to\rt$ has a unique extension as a continuous embedding $\tilde{\phi}:\so\rightarrow \st$ which takes the north pole of $\so$ to the north pole of $\st$. The map $\tilde{\phi}$ is a tame knot and it is smooth everywhere except possibly at the north pole where it may have an algebraic singularity (see \cite{do}, proposition 1). 

\begin{definition}\label{def2}
Two long knots $\phi,\hf\psi:\ro\to\rt$ are said to be {\sf topologically equivalent (simply, equivalent)} if there exist orientation preserving diffeomorphisms $F:\ro\to\ro$ and $H:\rt\to\rt$ such that $\psi=H\circ\phi\circ F$. 
\end{definition}

\begin{definition}
A {\sf diffeotopy} (respectively, {\sf homeotopy}) of $\rt$ is a continuous map $H:\zo\times\rt\to\rt$ such that: (i) $H_s=H(s,\hf\cdot\hf)$ is a diffeomorphism (respectively, homeomorphism) of\, $\rt$ for each $s\in\zo$ and (ii) $H_0$ is the identity map of $\rt$. 
\end{definition}

\begin{definition}\label{def4}
Two long knots $\tau,\sigma:\ro\to\rt$ are said to be {\sf ambient isotopic} if there exists a diffeotopy $H:\zo\times\rt\to\rt$ such that $\sigma=H_1\circ\tau$.
\end{definition}

For classical knots as tame embeddings of $\so$ in $\st$, the terms as in the definitions \ref{def2} and \ref{def4} can be defined using orientation preserving self-homeomorphisms of $\so$ and $\st$ and homeotopies of the ambient space $\st$. Using the standard results in topology, the following proposition is easy to prove.
 
\begin{proposition}\label{thm1}
For long knots $\phi, \psi:\ro\to\rt$, the following statements are equivalent:\vo
\noindent\hskip3.5mm i) The knots $\phi$ and $\psi$ are equivalent.\vo
\noindent\hskip2.25mm ii) The knots $\phi$ and $\psi$ are ambient isotopic.\vo
\noindent\hskip1mm iii) The extensions $\tilde{\phi}:\so\rightarrow\st$ and $\tilde{\psi}:\so\rightarrow\st$ are equivalent.\vo   
\noindent\hskip1.6mm iv) The extensions $\tilde{\phi}:\so\rightarrow\st$ and $\tilde{\psi}:\so\rightarrow\st$ are ambient isotopic.
\end{proposition}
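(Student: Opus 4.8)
The plan is to prove the three two‑sided implications (i)$\Leftrightarrow$(ii), (i)$\Leftrightarrow$(iii) and (iii)$\Leftrightarrow$(iv); the equivalence (ii)$\Leftrightarrow$(iv) then follows by transitivity. In each pair one direction is essentially free: if $\psi=H_1\circ\phi$ for a diffeotopy $H$ of $\rt$, then the pair $(F,H_1)$ with $F=\mathrm{id}_{\ro}$ witnesses that $\phi$ and $\psi$ are equivalent (the map $H_1$ is orientation preserving since the path $s\mapsto H_s$ keeps it in the identity component of $\mathrm{Diff}(\rt)$), and reading the same remark over $\so,\st$ gives (iv)$\Rightarrow$(iii). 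So the content lies in the opposite directions, and I would treat the reparametrization of the source and the ambient transformation one at a time in each case.

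For (i)$\Rightarrow$(ii): suppose $\psi=H\circ\phi\circ F$ with $F\in\mathrm{Diff}^{+}(\ro)$ and $H\in\mathrm{Diff}^{+}(\rt)$. To absorb $F$, choose a smooth path $s\mapsto F_s$ in $\mathrm{Diff}^{+}(\ro)$ from $\mathrm{id}$ to $F$ (this group is path connected); then $s\mapsto\phi\circ F_s$ is an isotopy of proper embeddings, each of which is again a long knot, all having the same image, so the isotopy extension theorem in the proper category covers it by a diffeotopy and shows $\phi$ is ambient isotopic to $\phi\circ F$. To absorb $H$, use that $\mathrm{Diff}^{+}(\rt)$ is path connected: after a translation assume $H(0)=0$; the Alexander‑type isotopy $H_s(x)=H(sx)/s$ for $s\in(0,1]$, with $H_0=DH(0)$, connects $H$ to an element of $GL^{+}_{3}(\ro)$, which is connected. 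This produces a diffeotopy $G$ with $G_1=H$, whence $s\mapsto G_s\circ(\phi\circ F)$ is an ambient isotopy from $\phi\circ F$ to $\psi$. Concatenating the two diffeotopies gives the ambient isotopy required for (ii).

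For (iii)$\Leftrightarrow$(iv) I would invoke the classical fact that two oriented tame knots in $\st$ are equivalent if and only if they are ambient isotopic: the reparametrization of $\so$ is absorbed using that $\mathrm{Homeo}^{+}(\so)$ is path connected (together with the isotopy extension theorem on the \emph{compact} manifold $\so$), and the ambient self‑homeomorphism of $\st$ is absorbed using that $\mathrm{Homeo}^{+}(\st)$ is path connected. For (i)$\Leftrightarrow$(iii) one passes between $\phi$ and its extension $\widetilde\phi\colon\so\to\st$: an equivalence $(F,H)$ of long knots extends, through the stereographic identifications $\so=\ro\cup\{\infty\}$ and $\st=\rt\cup\{\infty\}$, to orientation preserving self‑homeomorphisms $\widehat F$ of $\so$ and $\widehat H$ of $\st$ fixing the north poles and realizing $\widetilde\psi=\widehat H\circ\widetilde\phi\circ\widehat F$ (the extension $\widehat H$ is a homeomorphism but may be non‑smooth at the north pole, exactly the algebraic singularity already allowed by the extension statement recalled above), giving (i)$\Rightarrow$(iii); conversely, an equivalence of the extended knots is first normalized so that both homeomorphisms fix the north poles — which amounts to sliding the distinguished point along the knot $\widetilde\psi(\so)$ by a self‑homeomorphism of $\st$ isotopic to the identity — then restricted to $\ro$ and $\rt$, and finally replaced by orientation preserving diffeomorphisms via smoothing theory in dimension three, giving (iii)$\Rightarrow$(i).

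I expect the genuinely delicate points to be the two places where non‑compactness intrudes. First, in (i)$\Rightarrow$(ii) one must justify the isotopy extension step over $\ro$, which is where checking that each $\phi\circ F_s$ is again a long knot — so that the isotopy is proper — does real work. Second, in (iii)$\Rightarrow$(i) one must normalize the ambient homeomorphism of $\st$ to fix the north pole before de‑compactifying; this is handled by the point‑sliding trick above, but it is the step most easily overlooked. Everything else reduces to the ``standard results'' alluded to — isotopy extension, path connectedness of the transformation groups involved (elementary for $\mathrm{Diff}^{+}(\ro)$, $\mathrm{Diff}^{+}(\rt)$ and $\mathrm{Homeo}^{+}(\so)$, and of Cerf--Fisher type for $\mathrm{Homeo}^{+}(\st)$), and low‑dimensional smoothing.
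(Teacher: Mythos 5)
The paper does not actually prove this proposition: it is stated with only the remark that ``using the standard results in topology, the following proposition is easy to prove,'' so there is no argument of record to compare yours against. Your sketch is a correct and considerably more explicit elaboration of exactly the standard facts the paper implicitly invokes: path connectedness of $\mathrm{Diff}^{+}(\mathbb{R})$, $\mathrm{Diff}^{+}(\mathbb{R}^3)$ (via the smooth Alexander trick and connectedness of $GL_3^{+}(\mathbb{R})$), $\mathrm{Homeo}^{+}(S^1)$ and $\mathrm{Homeo}^{+}(S^3)$ (Fisher), isotopy extension, and the passage between a long knot and its one-point compactification. Your identification of the delicate points is also apt: the proper (non-compactly supported) isotopy extension needed to absorb the reparametrization $F$ in (i)$\Rightarrow$(ii) (here it may be cleaner to slide along the knot inside a proper tubular neighborhood $\mathbb{R}\times D^2$ of $\phi(\mathbb{R})$ rather than quote a general extension theorem), and the normalization at the north pole in (iii)$\Rightarrow$(i).

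The one place where your phrasing glosses over real content is the final step of (iii)$\Rightarrow$(i), where the topological equivalence obtained after restricting to $\mathbb{R}$ and $\mathbb{R}^3$ is ``replaced by orientation preserving diffeomorphisms via smoothing theory.'' One cannot simply approximate the homeomorphism $H$ by a diffeomorphism, since the exact relation $\psi=H\circ\phi\circ F$ must be preserved; what is really being used is that topologically equivalent smooth knots in dimension three are smoothly equivalent, which rests on Moise--Bing triangulation/Hauptvermutung-type results together with uniqueness of tubular neighborhoods. This is indeed a ``standard result in topology,'' so your proof is no less complete than the paper's (nonexistent) one, but if you write it out you should cite that equivalence of categories explicitly rather than suggest a pointwise smoothing of $H$.
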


\begin{definition}
A {\sf polynomial map} is a map $\phi:\ro\to\rt$ whose component functions are univariate real polynomials.
\end{definition}

\begin{definition}
A {\sf polynomial knot} is a polynomial map which is an embedding.
\end{definition}

A polynomial knot is a long knot. It has been proved (see \cite{ars}) that each long knot is topologically equivalent to some polynomial knot. Thus, each tame knot $\kappa:\so\to\st$ is ambient isotopic to the extension $\tilde{\phi}:\so\to \st$ of some polynomial knot $\phi:\ro\to\rt$.

\begin{definition}
A polynomial map $\phi=\fgh$ is said to have a {\sf degree sequence $(\hs d_1, d_2, d_3\hs)$} if\, $\deg(f)=d_1,\, \deg(g)=d_2$ and\, $\deg(h)=d_3$.
\end{definition}

\begin{definition}
A {\sf polynomial degree} of a polynomial map $\varphi:\ro\to\rt$ is the maximum of the degrees of its component polynomials.
\end{definition}

By composing with an orientation preserving tame\footnote{A tame polynomial automorphism is a composition of orientation preserving affine transformations and maps which add a multiple of a positive power of one row to an another row.} polynomial automorphism of $\rt$, a nontrivial polynomial knot $\phi$ with degree $d$ acquires the form $\sigma=\fgh$ such that $\fghio$ and none of the degree lie in the semigroup generated by the other two (see \cite{do}, section 5). For a sufficiently small $\varepsilon>0$, by adding $\varepsilon\hp t^{d-2},\,\varepsilon\hp t^{d-1}$\hskip1.5mm and\hskip1.5mm$\varepsilon\hp t^d$ in the respective components, one can make the degrees of $f,\,g$\hskip1.5mm and\hskip1.5mm$h$ respectively to be $d-2,\,d-1$\hskip1.5mm and\hskip1.5mm$d$ without changing the topological type of the knot. In other words, each polynomial knot $\phi$ of degree $d$ is topologically equivalent to a polynomial knot $\varsigma=\fghp$ with $\deg(f')=d-2,\,\deg(g')=d-1$\hskip1.5mm and\hskip1.5mm$\deg(h')=d$.

For any but fixed positive integer $d\geq 2$, consider a set $\ad$ of all polynomial maps $\fgh:\ro\to\rt$ with $\deg(f)\leq d-2$, $\deg(g)\leq d-1$ and $\deg(h)\leq d$. A typical element of this set would be a map $t\mapsto\abct$, where $a_i$'s, $b_i$'s and $c_i$'s are real numbers. The set $\ad$ can be identified with $\rtd$ and so it has a natural topology which comes from the usual topology of $\rtd$. Let $\pd$ be the set of all polynomial knots $\sigma=\fgh$ with $\fghio$ and let $\pdt$ be the set of all polynomial knots $\varsigma=\fghp$ with $\deg(f')=d-2,\,\deg(g')=d-1$\hskip1.5mm and\hskip1.5mm$\deg(h')=d$. Both $\pd$ and $\pdt$ are proper subsets of $\ad$; therefore, they have subspace topologies which comes from the topology of $\ad$. In other words, the spaces $\pd$ and $\pdt$ can be thought of as topological subspaces of $\rtd$ through the natural identification. Also, we may think the elements of the spaces $\pd$ and $\pdt$ as ordered $3d$-tuples of real numbers.

\begin{remark}
Note that\, $\spm\subsetneq\pn$,\, $\pnt\subsetneq\pn$\, and\, $\pmt\nsubseteq\pnt$\, for all $n>m\geq2$.
\end{remark}

\begin{remark}\label{rem1}
For any $n>m$ and any $\phi=\fgh$ in $\pmt$, there exists a sufficiently small $\varepsilon>0$ such that a polynomial knot $\phi_\varepsilon\in\pnt$ given by $t\mapsto\big(\hp\varepsilon\hf t^{n-2}+f(t),\hs\varepsilon\hf t^{n-1}+g(t),\hs\varepsilon\hf t^n+h(t)\hp\big)$ is topologically equivalent to the polynomial knot $\phi$.
\end{remark}

\begin{definition}
An ambient isotopy class $[\kappa]$ of a tame knot $\kappa:\so\to\st$ is said to have a {\sf polynomial representation in degree $d$} if there is a polynomial knot $\phi:\ro\to\rt$ of degree $d$ such that its extension $\tilde{\phi}:\so\to\st$ is ambient isotopic to $\kappa$. In this case, the polynomial knot $\phi$ is called a {\sf polynomial representation} of the knot-type $[\kappa]$.
\end{definition}

\begin{definition} 
A knot-type $[\kappa]$ is said to have {\sf polynomial degree $d$} if it is the least positive integer such that there exists a polynomial knot $\phi$ of degree $d$ representing the knot-type $[\kappa]$. In this case, the polynomial knot $\phi$ is called a {\sf minimal polynomial representation} of the knot-type $[\kappa]$.
\end{definition}

\begin{remark}\label{rem4}
Obviously, the polynomial degree of a knot-type is a knot invariant.
\end{remark}

\begin{remark}\label{rem5}
If a knot-type $[\kappa]$ is represented by a polynomial knot $\phi=\fgh$, then the knots $\mifgh,\, \fmgh,\, \fgmh$\, and\, $\minusfgh$\, represent the knot-type $[\kappa^*]$ of the mirror image of $\kappa$. Thus, a knot and its mirror image have same polynomial degree. This says that the polynomial degree can not detect the chirality of knots.
\end{remark}

Certain numerical knot invariants can be inferred from the polynomial degree of a knot and vice-versa. In this connection some known useful results are summarized in the following proposition.

\begin{proposition}\label{thm2}
For a classical tame knot $\kappa:\so\to\st$, we have the following:\vo
\noindent\hskip1mm 1)\hskip2.5mm $c[\kappa]\leq \frac{(d-2)(d-3)}{2}$\hp,\vo
\noindent\hskip1mm 2)\hskip2.5mm $b[\kappa]\leq \frac{(d-1)}{2}$\hw and\vo
\noindent\hskip1mm 3)\hskip2.5mm $s[\kappa]\leq \frac{(d+1)}{2}$\hp,\vo
\noindent where $c[\kappa],\, b[\kappa],\,s[\kappa]$ and $d$ denote respectively the crossing number, the bridge index, the superbridge index and the polynomial degree of the knot-type $[\kappa]$.
\end{proposition}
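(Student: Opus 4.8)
The plan is to derive each inequality from a known geometric/combinatorial bound on the corresponding invariant together with the special structure forced on a polynomial knot $\phi=(f,g,h)$ of degree $d$. Throughout I would use the reductions already established in the excerpt: up to orientation‑preserving tame automorphisms we may assume $\deg(f)=d-2$, $\deg(g)=d-1$, $\deg(h)=d$, and that the knot‑type $[\kappa]$ is represented by such a $\phi$ whose extension $\tilde\phi$ is ambient isotopic to $\kappa$.

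For (1), the idea is to count double points of a generic planar projection. Project $\phi$ to the $(f,g)$–plane; the self‑intersections of the image correspond to pairs $(s,t)$, $s\neq t$, with $f(s)=f(t)$ and $g(s)=g(t)$. The first equation $f(s)=f(t)$ factors through $(s-t)$ to give a curve of degree $d-3$ in the remaining variable, and intersecting with $g(s)=g(t)$ (which similarly reduces to degree $d-2$) bounds the number of unordered such pairs by $\tfrac{(d-2)(d-3)}{2}$ via a Bézout‑type count after passing to the symmetric coordinates $u=s+t$, $v=st$. Since the crossing number $c[\kappa]$ is at most the number of crossings in \emph{any} regular projection, and one checks that after a small perturbation (as in Remark \ref{rem1}) the $(f,g)$–projection is regular, we get $c[\kappa]\le \tfrac{(d-2)(d-3)}{2}$. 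Here I would cite the appropriate reference (Durfee–O'Shea \cite{do}, or the two‑bridge papers \cite{pm1,pm4}) rather than re‑prove the double‑point count in detail.

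For (2) and (3), the plan is to use the classical facts that the bridge index $b[\kappa]$ equals the minimum over all directions of half the number of local maxima of the height function, and that the superbridge index $s[\kappa]$ is the minimum over all parametrizations of the maximum over directions of the number of local maxima. Take the height function to be the last coordinate $h(t)$: it is a polynomial of degree $d$, so $h'$ has at most $d-1$ real roots, hence $h$ has at most $\lfloor (d-1)/2\rfloor$ local maxima on $\mathbb{R}$, giving the bridge bound $b[\kappa]\le (d-1)/2$. For the superbridge bound, a generic linear functional $\ell\circ\phi = \alpha f+\beta g+\gamma h$ on the polynomial knot is a polynomial of degree exactly $d$ (the $h$–term dominates when $\gamma\neq0$), so it again has at most $\lfloor(d-1)/2\rfloor$ local maxima; the maximum over directions is thus bounded the same way, and taking the minimum over parametrizations (in particular our $\phi$) yields $s[\kappa]\le (d+1)/2$. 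The slight shift from $(d-1)/2$ to $(d+1)/2$ comes from handling the closed‑up knot $\tilde\phi$ on $S^3$: the direction along which the projection degenerates at infinity can contribute one extra bridge, or equivalently one must round up differently after compactification; I would spell out this edge case carefully.

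The main obstacle is the bookkeeping at infinity and the genericity hypotheses: one must be sure that (i) a small perturbation keeping $\phi$ inside $\pdt$ makes the relevant projections regular without changing $[\kappa]$ (Remark \ref{rem1} supplies exactly this), and (ii) the passage from the long‑knot picture on $\mathbb{R}^3$ to the compact knot on $S^3$, where Proposition \ref{thm1} guarantees the equivalence is preserved, does not lose or gain crossings/bridges beyond the stated $\pm1$ adjustments. Once those points are pinned down, each of the three inequalities is a one‑line consequence of a degree count, so I would present the proof as: fix the normal form, do the three elementary estimates, and invoke the cited structural results for the parts that are standard.
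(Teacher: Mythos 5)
Your parts (1) and (3) follow essentially the argument the paper itself only cites (propositions 12--14 of Durfee--O'Shea): a B\'ezout count of double points for the crossing number, and ``at most $\lfloor (d-1)/2\rfloor$ interior local maxima plus possibly one more at the closure point'' for the superbridge index. One small inaccuracy in (1): passing to the symmetric coordinates $u=s+t$, $v=st$ does not by itself produce the factor $\tfrac{1}{2}$. The clean count applies B\'ezout in the $(s,t)$-plane to the curves $F(s,t)=\frac{f(s)-f(t)}{s-t}=0$ and $G(s,t)=\frac{g(s)-g(t)}{s-t}=0$, of degrees $d-3$ and $d-2$ after the normalization $\deg f=d-2$, $\deg g=d-1$; this bounds the number of \emph{ordered} pairs by $(d-2)(d-3)$, and the division by $2$ comes from the $s\leftrightarrow t$ symmetry, not from the degrees of the symmetrized equations. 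Since you defer the details to the reference (as the paper does), this is only a slip of phrasing.

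The genuine gap is in part (2). You take the height function to be the third coordinate $h$, of degree $d$, and count at most $\lfloor(d-1)/2\rfloor$ local maxima on $\mathbb{R}$; but $b[\kappa]$ is the bridge index of the closed-up knot $\tilde\phi$ in $S^3$, and the closure through the point at infinity can contribute one additional local maximum --- exactly the $+1$ correction you yourself invoke in part (3). When $d$ is odd, $h$ tends to $+\infty$ at one end and $-\infty$ at the other, so for either sign of the $h$-direction one end rises to the closure arc and the junction is an extra maximum; your argument then only gives $b[\kappa]\le (d+1)/2$, the superbridge-type bound, not $(d-1)/2$ (for $d=5$ this is the difference between $b\le 3$ and the sharp $b\le 2$ needed for the trefoil). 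The fix, and what the cited proposition of Durfee--O'Shea actually uses, is to take the direction of the \emph{lowest}-degree coordinate: after the normalization you already assume, $\deg f\le d-2$, so $f$ has at most $\lfloor(d-3)/2\rfloor$ interior local maxima when $d$ is odd (and, choosing the sign of the direction so that both ends of $f$ tend to $-\infty$ when $\deg f$ is even, at most $(d-2)/2$ with no extra maximum at the closure); in both parities the total, including the possible maximum at infinity, is at most $\lfloor(d-1)/2\rfloor$, which gives the stated bound.
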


The first part of proposition \ref{thm2} can be proved using the Bezout's theorem. The proofs of the second and third parts are trivial. To get an idea about the proofs, one can refer the propositions $12,\,13$ and $14$ in \cite{do}.\vo 

From the first result as mentioned in proposition \ref{thm2}, it is clear that in order to represent a knot with certain number of crossings the degree of its polynomial representation has some lower bound. The knots with same number of crossings may have different crossing patterns, i.e. over and under crossing information. The result below tells us how the degree relates to the nature of the crossings.

\begin{theorem}\label{thm3}
Let $t\mapsto\fgt$ be a regular projection of a long knot $\kappa:\ro\to\rt$, where $f$ and $g$ are real polynomials; $\deg(f)=n$ and $\deg(g)=n+1$. Suppose the crossing data $\kappa$ is such that there are $r$ changes from over/under crossings to under/over crossings as we move along the knot. Then there exists a polynomial $h$ with degree $d\leq\min\{\hs n+2,r\hs\}$ such that the polynomial map $t\mapsto\fght$ is an embedding which is ambient isotopic to $\kappa$.
 \end{theorem}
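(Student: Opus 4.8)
The plan is to construct $h$ explicitly as an interpolating polynomial whose values at the crossing parameters are dictated by the crossing data, and then to argue that such an $h$ lifts the plane projection to an embedding ambient isotopic to $\kappa$. First I would set up the crossings: since $t\mapsto(f(t),g(t))$ is a regular projection, there are finitely many pairs $\{s_j,t_j\}$ of distinct parameters with $f(s_j)=f(t_j)$ and $g(s_j)=g(t_j)$, giving the double points $p_1,\dots,p_k$ in the plane; at each such crossing the given data prescribes which of the two strands lies above. Walking along $\kappa$ we record at each crossing whether the current strand is the over- or under-strand; by hypothesis this sign changes $r$ times in total, so the crossings can be grouped so that along the knot we see blocks of consecutive ``over'' crossings and blocks of consecutive ``under'' crossings, with $r$ transitions between blocks.

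Next I would realize these over/under signs by the sign of $h$ at the relevant parameters. Concretely, I want a polynomial $h$ such that for each crossing pair $\{s_j,t_j\}$ the larger value of $h$ is taken on the strand that should lie above; it suffices to produce $h$ monotone on $\mathbb{R}$ between the parameters so that the ordering of $h$-values along the knot matches the prescribed over/under pattern. Thinking of the knot as parametrized by $t\in\mathbb{R}$, order all the crossing parameters (both $s_j$ and $t_j$) on the line; the condition ``strand $A$ over strand $B$ at crossing $j$'' becomes a linear inequality $h(\text{param of }A) > h(\text{param of }B)$, and these inequalities, read in order along the real line, change direction exactly $r$ times — i.e. the desired ``profile'' of $h$ is a piecewise-monotone function on $\mathbb{R}$ with at most $r$ turning points. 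Such a profile can be interpolated by a polynomial of degree at most $r$: pick target values at the (finitely many) crossing parameters consistent with the inequalities, and Lagrange-interpolate; the number of sign changes of $h'$ — hence the degree needed — is controlled by the number $r$ of direction reversals. Independently, the construction in \cite{do} (or in Remark~\ref{rem1}-type perturbation arguments) gives a lift of degree $n+2$: since $\deg f=n,\deg g=n+1$, adding a generic small $\varepsilon t^{n+2}$ already produces an embedding of polynomial degree $n+2$ with the correct projection, and one checks its crossing data can be matched to $\kappa$'s. Taking whichever of the two constructions has smaller degree gives $d\le\min\{n+2,r\}$.

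Finally I would verify that the resulting polynomial map $t\mapsto(f(t),g(t),h(t))$ is genuinely an embedding and is ambient isotopic to $\kappa$: injectivity away from the crossing parameters is automatic because the projection already separates those points, and at each crossing pair $\{s_j,t_j\}$ we arranged $h(s_j)\ne h(t_j)$, so the space curve is embedded; immersion-ness (nonvanishing derivative) holds because $(f',g')$ already does not vanish on a regular projection. Ambient isotopy to $\kappa$ then follows from Proposition~\ref{thm1} together with the fact that a knot diagram is determined up to isotopy by its underlying projection plus its crossing data, which is exactly what $h$ has been built to reproduce.

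The main obstacle I anticipate is the degree bookkeeping in the interpolation step: one must show that matching a profile with $r$ sign reversals of the slope can always be done with a polynomial of degree $\le r$ (not merely $\le$ the number of crossing parameters, which could be much larger), and that this polynomial simultaneously respects \emph{all} the crossing inequalities rather than just realizing the right turning-point count. Handling the case where several strands pass through a narrow parameter window, and making sure the chosen interpolation values are consistent (no forced contradiction among the inequalities coming from the fact that $\kappa$ is a genuine knot diagram), is where the care is needed; the bound $\min\{n+2,r\}$ rather than just $r$ is what lets us sidestep pathological cases by falling back on the $\varepsilon t^{n+2}$ construction.
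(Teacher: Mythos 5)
Your plan splits into the same two bounds as the actual proof, but in both halves the decisive step is missing. For the bound $d\le r$: your scheme of prescribing target values at all crossing parameters and Lagrange-interpolating gives a degree bounded by the number of crossing parameters, not by $r$, and you acknowledge you cannot close that gap. The missing idea is that you do not need to control the ordering of all $h$-values or build a piecewise-monotone profile at all; it is enough for $h$ to be (say) positive at every parameter value where the knot should pass over and negative at every parameter value where it should pass under, since at each double point one of the two parameters $s_j,t_j$ is an over point and the other an under point, so opposite signs already place the correct strand on top. Because the over/under labels change exactly $r$ times as $t$ runs through the crossing parameters, one can choose points $a_1<\cdots<a_r$ separating consecutive blocks of like-labelled parameters and take $h(t)=\prod_{i=1}^{r}(t-a_i)$: this has degree $r$ and alternates sign across the blocks, which is exactly the paper's construction. (As a side remark, a degree-$r$ polynomial has at most $r-1$ turning points, so even the turning-point count in your profile argument is off by one.)

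For the bound $d\le n+2$: your fallback is not correct as stated. Taking $h(t)=\varepsilon t^{n+2}$ does generically give an embedding over the given projection, but its crossing data is then forced -- at the double point with parameters $s_i<t_i$ the over-strand is determined by the sign of $t_i^{n+2}-s_i^{n+2}$ -- and there is no remaining freedom with which to ``match'' it to the prescribed data of $\kappa$; you simply obtain some knot with this projection, whereas realizing the given crossing data is the whole content of the theorem. The paper instead takes $h(t)=C_{n+2}t^{n+2}+\cdots+C_1t$ with all coefficients unknown and imposes the conditions $h(t_i)-h(s_i)=e_i$, where the signs of the nonzero numbers $e_i$ encode the prescribed over/under data; this is a linear system in the $C_k$ whose coefficient matrix has full row rank (this is where the double-point relations $f(s_i)=f(t_i)$ and $g(s_i)=g(t_i)$ are used), hence it is solvable, producing $h$ of degree at most $n+2$ whose lift represents $\kappa$. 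Without an argument of this kind your second bound does not follow, and since the first bound is also incomplete, the claimed inequality $d\le\min\{n+2,\,r\}$ is not established by the proposal. Your final verification (injectivity off the crossings, immersedness from the regular projection, and isotopy type determined by projection plus crossing data) is fine once a correct $h$ is in hand.
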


\begin{proof}
Let polynomials $f$ and $g$ be given by $f(t)=a_0 + a_1 t+\cdots + a_n t^n$ and $g(t)={b_0 + b_1 t+\cdots + b_{n+1} t^{n+1}}$. The double points of the curve $t\mapsto\fgt$ can be obtained by finding the real roots of the resultant $\varGamma$ of the polynomials 
\begin{eqnarray*}
F(s, t)&=& a_1 +a_2\hp(s+t)+\cdots + a_n\hp(s^{n-1}+ s^{n-2}t+\cdots+t^{n-1})\hw\mbox{and}\\ 
G(s, t)&=& b_1 +b_2\hp(s+t)+\cdots + b_{n+1}\hp(s^n+ s^{n-1}t+\cdots+t^n)\hp.
\end{eqnarray*} 
Let $[\hf a, b\hf]$ be an interval that contains all the roots of $\varGamma$. Let us call these roots as the crossing points. We divide the interval $[\hf a, b\hf]$ into sub intervals $a=a_0<a_1<\cdots<a_r=b$ in such a way that the $a_i$'s are not from the crossing points and within any sub interval $[\hf a_{i-1}, a_i\hf]$ all the crossing points are either under crossing points or over crossing points. Let $h_1(t)=\Pi_{i=1}^r (t-a_i)$. Clearly, the map $h_1$ is a polynomial of degree $r$ that has opposite signs at under crossing and over crossing points and thus $\phi_1=(\hs f,\hs  g,\hs  h_1\hs)$ represents the knot $\kappa$.
 
For $i\in\{1,2,\ldots,n\}$, let $(s_i, t_i)$ with $s_i<t_i$ and $s_1<s_2<\cdots<s_n$ be pairs of parametric values at which the projection has double points. Let $C_{n+2}\hp t^{n+2}+C_{n+1}\hp t^{n+1}+\cdots+C_1\hp t$ be a polynomial of degree $n+2$, where $C_i$'s are unknowns which we have to find by solving the system
\begin{eqnarray*}
C_{n+2}\hp( t_1^{n+2}-s_1^{n+2})+C_{n+1}\hp( t_1^{n+1}-s_1^{n+1} )+\cdots+C_1\hp( t_1-s_1 )&=& e_1\\
C_{n+2}\hp( t_2^{n+2}-s_2^{n+2})+C_{n+1}\hp( t_2^{n+1}-s_2^{n+1} )+\cdots+C_1\hp( t_2-s_2 )&=& e_2\\
\vdots\hskip36.3mm\vdots\hskip38mm\vdots\hskip7.3mm & &\hskip1mm\vdots\\
C_{n+2}\hp( t_n^{n+2}-s_n^{n+2})+C_{n+1}\hp( t_n^{n+1}-s_n^{n+1} )+\cdots+C_1\hp( t_n-s_n )&=& e_n
\end{eqnarray*}
\noindent of $n$ linear equations in $n+2$ unknowns. The numbers $e_i$'s are arbitrary but fixed non-zero real numbers and they are positive or negative according to which the crossing is under crossing or over crossing. The conditions $f(s_i)=f(t_i)$ and $g(s_i)=g(t_i)$ for $i\in\{1,2,\ldots,n\}$ imply that the coefficient matrix\vo
\[
M=
\left[ 
{\begin{array}{cccc}
t_1^{n+2}-s_1^{n+2} & t_1^{n+1}-s_1^{n+1} & \cdots & t_1-s_1\\
t_2^{n+2}-s_2^{n+2} & t_2^{n+1}-s_2^{n+1} & \cdots & t_2-s_2\\
\vdots              & \vdots              &        & \vdots\\
t_n^{n+2}-s_n^{n+2} & t_n^{n+1}-s_n^{n+1} & \cdots & t_n-s_n\\
\end{array} } 
\right]
\]\vw
\noindent of the above system has rank $n$. This says that the system of linear equations has infinitely many solutions. Let $C_1=c_1, C_2=c_2,\ldots, C_{n+2}=c_{n+2}$ be any one of the solution. With this solution, the polynomial $h_2(t)=c_1\hp t+c_2\hp t^2+\cdots+c_{n+2}\hp t^{n+2}$ is such that the embedding $\phi_2=(\hs f,\hs  g,\hs  h_2\hs)$ represents the knot $\kappa$.
\end{proof}

In connection with polynomial representation of knots, the following questions are of interest namely:
\begin{question}
Given a knot $\kappa$, what is the least degree $d$ such that it has a polynomial representation in the space $\pd$?
\end{question}

\begin{question}
Given a positive integer $d$, what are the knots those can be represented in the space $\pd$?
\end{question}

\begin{question}
Given a positive integer $d$, estimate the number of path components of the spaces $\pd$ and $\pdt$.
\end{question}

We have tried to answer these questions for $d\leq7$. In general, all these problems become difficult and answering one helps in answering the other two.

\begin{proposition}\label{thm4}
The unknot is the only knot that can be represented as a polynomial knot in the space $\pd$ for $d\leq4$.
\end{proposition}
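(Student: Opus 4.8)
The strategy is to derive the claim directly from the crossing-number estimate of Proposition~\ref{thm2}, together with the classical fact that the unknot is the only knot with fewer than three crossings; no case-by-case combinatorial analysis of the possible degree sequences will be needed.

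Suppose $\phi=(f,g,h)$ is a polynomial knot lying in $\pd$ for some $d\leq 4$, and let $[\kappa]$ be the knot-type it represents. By the definition of $\pd$ the degree sequence $(d_1,d_2,d_3)$ of $\phi$ satisfies $d_1<d_2<d_3\leq d\leq 4$, so the polynomial degree of $\phi$ is $d_3\in\{2,3,4\}$. Applying part~(1) of Proposition~\ref{thm2} with polynomial degree $d_3$ gives
\[
c[\kappa]\ \leq\ \frac{(d_3-2)(d_3-3)}{2},
\]
and since the right-hand side equals $0,0,1$ for $d_3=2,3,4$ respectively, we conclude $c[\kappa]\leq 1$ in every case. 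But a classical fact of knot theory asserts that every nontrivial knot has crossing number at least $3$: a knot diagram with one or two crossings is reducible by Reidemeister moves to the trivial diagram. Hence $c[\kappa]=0$, that is, $[\kappa]$ is the unknot.

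It then remains only to remark that the unknot is indeed represented in $\pd$ for each $d\leq 4$, so that the statement is not vacuous: the polynomial knot $t\mapsto\big(1,\,t,\,t^2\big)$ has degree sequence $(0,1,2)$, hence lies in $\pd$ for every $d\geq 2$, and its image is contained in the plane $\{x=1\}$; its extension to $S^1$ therefore lies in a $2$-sphere of $S^3$ and so bounds a disc, which forces the represented knot-type to be trivial.

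I do not anticipate any genuine obstacle in this argument. The only two points deserving a word of care are that $(x-2)(x-3)/2$ is not monotone on $\{2,3,4\}$, so one genuinely evaluates it at the three admissible values of $d_3$ rather than appealing to monotonicity, and that one must invoke and cite the classical nonexistence of knots with crossing number $1$ or $2$, which is exactly what upgrades the inequality $c[\kappa]\leq 1$ to the equality $c[\kappa]=0$.
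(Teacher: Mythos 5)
Your argument is the same as the paper's: apply part (1) of Proposition \ref{thm2} to bound $c[\kappa]\leq 1$ and conclude the knot is trivial, the paper's proof being just the one-line estimate $c[\kappa]\leq\frac{(d-2)(d-3)}{2}\leq\frac{(4-2)(4-3)}{2}=1$. Your extra care about evaluating the bound at each admissible degree and your closing remark that the unknot is actually realized are fine but not needed for the statement.
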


\begin{proof}
Let $\kappa$ be a knot which is represented by a polynomial knot in $\pd$ for $d\leq 4$. By proposition \ref{thm2}, the crossing number $c[\kappa]$ of the knot-type $[\kappa]$ satisfies the inequality $c[\kappa]\leq \frac{(d-2)(d-3)}{2}\leq \frac{(4-2)(4-3)}{2}=1$. Hence $\kappa$ must be the unknot.
\end{proof}

\begin{proposition}\label{thm5}
The unknot, the left hand trefoil and the right hand trefoil are the only knots those can be represented as polynomial knots in the space $\pf$.
\end{proposition}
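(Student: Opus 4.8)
The plan is to combine the crossing-number obstruction of Proposition~\ref{thm2} with three explicit realizations, one for each knot type. First I would run the ``only if'' direction: if a knot-type $[\kappa]$ admits a representative in $\pf$, then its polynomial degree is at most $5$, so part~(1) of Proposition~\ref{thm2} gives
\[
c[\kappa]\ \le\ \frac{(5-2)(5-3)}{2}\ =\ 3 .
\]
By the classification of knots of small crossing number (Rolfsen's table) the only knot-types with crossing number at most $3$ are the unknot and the two trefoils (left- and right-handed); in particular no knot has crossing number $1$ or $2$. Hence no knot other than these three can occur in $\pf$.

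It then remains to exhibit each of the three inside $\pf$. For the unknot, the polynomial map $t\mapsto(t,\,t^2,\,t^3)$ has degree sequence $(1,2,3)$ with $1<2<3\le 5$, so it lies in $\pf$; it is an embedding because its first coordinate is injective, and its projection to the first two coordinates is an embedded parabola, so it carries no crossings and represents the unknot. For the trefoil I would invoke Shastri's explicit example \cite{ars}, which has degree sequence $(3,4,5)$ and hence already lies in $\pf$; this supplies, say, the right-hand trefoil. Finally, by Remark~\ref{rem5}, negating one component polynomial of Shastri's map yields a polynomial knot with the same degree sequence $(3,4,5)$ that represents the mirror image, i.e.\ the left-hand trefoil, again inside $\pf$. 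Combining the two directions proves the proposition.

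The only step that is not purely formal is checking that the exhibited maps are genuinely embeddings of the claimed knot-types. For the unknot this is immediate from the injectivity of a coordinate function together with the shape of a planar projection. For the trefoil the required verification --- computing the real roots of the resultant that detects the double points of the $(f,g)$-projection, observing that they yield a standard three-crossing diagram, and reading off from the third coordinate that the over/under pattern is that of a trefoil --- is exactly what Shastri carries out; alternatively one can reconstruct it in the spirit of the proof of Theorem~\ref{thm3}, by fixing the parametrized double points of the projection and checking the sign pattern of the $z$-coordinate at them. Thus the main (and essentially only) obstacle is the bookkeeping of the crossing data of one concrete degree-$5$ trefoil, and this is already available in the literature.
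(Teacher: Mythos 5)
Your argument is correct and its core is exactly the paper's: apply part (1) of Proposition~\ref{thm2} to get $c[\kappa]\le 3$ and conclude that only the unknot and the two trefoils can occur. The additional verification that all three types are actually realized in $\pf$ (the cubic-curve unknot, Shastri's degree-$5$ trefoil, and its mirror via Remark~\ref{rem5}) is sound but is left implicit in the paper's proof and only appears later, in Section~\ref{sec3.3}.
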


\begin{proof}
Let $\kappa$ be a knot which is represented by a polynomial knot in $\pf$. By proposition \ref{thm2}, the crossing number $c[\kappa]$ of the knot-type $[\kappa]$ satisfies the inequality $c[\kappa]\leq \frac{(5-2)(5-3)}{2}=3$. Thus, $\kappa$ is either the unknot or one of the trefoil knot.
\end{proof}

The figure eight knot has a polynomial representation (figure 3, section \ref{sec3.4}) in the space $\pst$. However, since the knot $5_1$ is $4$-superbridge, so by proposition \ref{thm2}, it can not be represented in degree $6$. Regarding the knot $5_2$, we have the following:

\begin{proposition}\label{thm6}
There exist polynomials $f$ and $g$ of degrees $4$ and $5$ respectively such that the map $t\mapsto\fgt$ represents a regular projection of $5_2$ knot.
\end{proposition}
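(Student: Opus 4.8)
\emph{Proof proposal.} The plan is to build $f$ and $g$ by hand and to certify, by a resultant computation, that the plane curve $t\mapsto\fgt$ realizes the shadow of a minimal diagram of $5_2$. Since we are only asked for a regular projection, it suffices to produce a generic immersion of $\mathbb{R}$ in the plane of the form $t\mapsto\fgt$ with $\deg f=4$ and $\deg g=5$, having finitely many transverse double points, no triple points and no tangencies, whose underlying $4$-valent shadow coincides with that of some diagram $D$ of $5_2$; re-installing on its crossings the over/under data of $D$ then yields a diagram equivalent to $D$, so $t\mapsto\fgt$ represents a regular projection of $5_2$.

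I would first fix the target combinatorics. Take the standard $5$-crossing $2$-bridge diagram $D$ of $5_2$, cut it open at a point lying on no crossing so as to view $5_2$ as a long knot, and record the length-$10$ Gauss word $w$ in which its five crossings are met along the strand. It is then enough to find $f,g$ of the prescribed degrees for which $t\mapsto\fgt$ has exactly five double points, met in the order dictated by $w$ up to the harmless symmetries of the situation --- the choice of cut point, the orientation, and passage to the mirror image (permitted by Remark~\ref{rem5}). Degree is not the obstruction here, which is exactly why this part of the Durfee--O'Shea question is delicate for $5_2$, unlike $5_1$ (excluded by Proposition~\ref{thm2}(3)): the double points of $t\mapsto\fgt$ are the real pairs $(s,t)$, $s\ne t$, with $F(s,t)=G(s,t)=0$ where $F(s,t)=\big(f(t)-f(s)\big)/(t-s)$ and $G(s,t)=\big(g(t)-g(s)\big)/(t-s)$, and the image curve in the plane is rational of degree $5$, so by the Bezout/genus argument behind Proposition~\ref{thm2}(1) it carries at most $6$ double points. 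Thus five transverse double points comfortably fit, and realizing exactly five of them in a prescribed order is a stable condition.

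For the construction proper I would choose the quartic $f$ with positive leading coefficient and three simple critical points, so that over a suitable window of heights $c$ the fibre $f^{-1}(c)$ consists of four points $t_1(c)<t_2(c)<t_3(c)<t_4(c)$; every candidate double point of the curve is then a pair $\big(t_i(c),t_j(c)\big)$, and which of these become genuine double points is governed by the quintic $g$ through the equations $g(t_i(c))=g(t_j(c))$. I would then tune the coefficients of $g$ so that precisely five of these equations are solvable in the admissible range and the five resulting double points occur along the curve in the order $w$ prescribes. Concretely: write down a candidate pair $(f,g)$ with simple rational coefficients guided by the desired folding pattern of $f$ and interleaving pattern of $g$; compute the resultant $\varGamma$ of $F$ and $G$ with respect to one variable, divide out the factor coming from the diagonal $s=t$, and read off from the sign changes of what remains the number and the order of the real double points; finally perturb the coefficients slightly (which changes neither the number nor the order of the double points, by openness) to secure transversality and the absence of triple points, and check that the Gauss word is indeed $w$.

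The main obstacle is not this degree count but pinning down the combinatorics: one must exhibit explicit coefficients for which exactly five real double points occur, in exactly the order dictated by the $5_2$ Gauss code, with no sixth double point surfacing and no degeneracy. Finding such a pair requires genuine experimentation guided by the folding/interleaving heuristic above; once it is found, all that remains is a finite verification --- a resultant, a sign analysis, and reading off the Gauss word.
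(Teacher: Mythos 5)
There is a genuine gap: your argument never actually establishes existence, which is the entire content of the proposition. Everything you set up --- the divided-difference polynomials $F$ and $G$, the resultant, the genus/Bezout bound of at most $6$ double points for a rational quintic, the remark that having exactly five transverse double points in a prescribed order is an open (stable) condition --- only tells you that \emph{if} a pair $(f,g)$ realizing the Gauss word of $5_2$ exists, then it can be certified by a finite computation and survives small perturbation. Openness does not show the set is nonempty, and the inequality $5\le 6$ does not show that the particular crossing sequence of $5_2$ is realizable by a curve of bidegree $(4,5)$: a priori only some chord diagrams with five crossings are achievable in such low degree, and deciding which ones is exactly the delicate point. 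You acknowledge this yourself ("finding such a pair requires genuine experimentation"), but the experimentation is never carried out, so no witness is produced and no theoretical substitute is offered; as written the proof stops precisely where the proposition begins.

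The paper closes this gap in two ways, either of which you would need. First, it gives a structural existence argument: the parametrization $t\mapsto(t^4,t^5)$ has an isolated singularity with $\delta$-invariant $\tfrac{(5-1)(4-1)}{2}=6\ge 5$, and a theorem of Pecker guarantees that this singularity can be deformed, within parametrized curves $t\mapsto\big(a_4t^4+\cdots,\;b_5t^5+\cdots\big)$, so as to create $5$ real nodes (and one imaginary node), with a continuity argument then letting one arrange the nodes in the order required by the $5_2$ projection. Second, it exhibits an explicit witness, $t\mapsto\big(2(t-2)(t+4)(t^2-11),\,t(t^2-6)(t^2-16)\big)$, whose double-point pattern is checked to be that of $5_2$. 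To repair your proposal you should either import a result of Pecker's type (deformation of the $(4,5)$-cusp producing the maximal number of real nodes in prescribed order) or simply produce and verify concrete coefficients by the resultant computation you describe; without one of these the stability and counting observations carry no existential force.
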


\begin{proof}
Consider a plane curve $C$ given by the parametric equation $t\mapsto(\hp t^4,\hp t^5\hp)$. This curve has an isolated singularity at the origin. For such plane curves, there are two important numbers that remain invariant under any formal isomorphisms of plane curves. The first one is the Milnor number $\mu$ and the other is the $\delta$ invariant (see \cite{na2}). For a single component plane curve they satisfy the relation $2\delta =\mu$.\vo
 
In the present case it turns out that the $\delta$ invariant is equal to $\frac{(5-1)(4-1)}{2}=6.$ The $\delta$ invariant of a plane curve which is singular at the origin measures the number of double points that can be created in a neighborhood of the origin. Note that, we have $\delta\geq 5$. Using a result of Daniel Pecker \cite{dp} we can deform the curve $C$ into a new curve $\tilde{C}$ given by $t\mapsto\abtf$ such that $\tilde{C}$ has $5$ real nodes and $1$ imaginary node. By continuity argument, we can choose the coefficients $a_i$'s and $b_i$'s such that the nodes occur in the order they are in the regular projection of the given knot.\vo 

In fact, we have found the curve $t\mapsto\big(\hp2(t - 2)(t + 4)(t^2 - 11),\hs t(t^2 - 6)(t^2 - 16)\hp\big)$ which represents a regular projection of $5_2$ knot as shown in the figure bellow:
\begin{figure}[H]
\begin{center}
\includegraphics[scale=0.32]{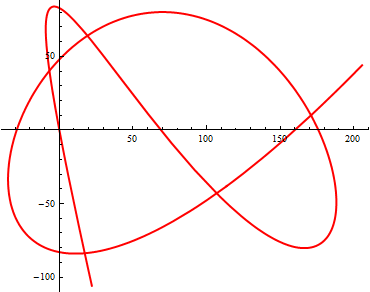}
\caption{Projection of $5_2$ knot}
\end{center}
\end{figure}\vspace{-13.5mm}
\end{proof}\vskip1.5mm

Let us consider the following sets:
\begin{align*}
U_1&=\big\{\hs\abf\in\ro^{11}\mid\mbox{a map}\hskip1.3mm t\mapsto\big(\hp a_4t^4+ a_3 t^3\\ &\hskip8.2mm+a_2 t^2+a_1t+a_0,\hp b_5t^5+b_4t^4+b_3t^3+b_2t^2+b_1t+b_0\hp\big)\hskip1.5mm\mbox{is a regular}\\ &\hskip9.2mm\mbox{projection of}\hskip1.3mm 5_2\hskip1.3mm \mbox{knot with}\hskip1.3mm 5\hskip1.3mm \mbox{double points}\hf\big\}.\\ 
U_2&=\big\{\hs\abf\in\ro^{11}\mid\mbox{a map}\hskip1.3mm t\mapsto\big(\hp a_4t^4+ a_3 t^3\\ &\hskip8.2mm+a_2 t^2+a_1t+a_0,\hp b_5t^5+b_4t^4+b_3t^3+b_2t^2+b_1t+b_0\hp\big)\hskip1.5mm\mbox{is a regular}\\ &\hskip9.2mm\mbox{projection of}\hskip1.3mm 5_2\hskip1.3mm \mbox{knot with}\hskip1.3mm 6\hskip1.3mm \mbox{double points}\hf\big\}.
\end{align*}
By proposition \ref{thm6}, it is easy to see that the set $U=U_1\cup U_2$ is nonempty. For any $\abf\in U$, we must have both $a_4$ and $b_5$ non zero, because otherwise by an application of the Bezout's theorem there would be less than five crossings (see \cite{do}, lemma 4) for the curve $t\mapsto\abtf$. For a projection $t\mapsto\fgt$ such that $f$ and $g$ are polynomials of degrees $4$ and $5$ respectively, we would like to find a polynomial $h$ of least possible degree such that $t\mapsto\fght$ represents $5_2$ knot. In this direction, we have the following theorem:

\begin{theorem}\label{thm7}
For a generic choice of a regular projection $t\mapsto\fgt$ with $\deg(f)=4$ and $\deg(g)=5$, there does not exist any polynomial $h$ of degree $6$ such that a polynomial map $t\mapsto\fght$ represents the knot $5_2$.
\end{theorem}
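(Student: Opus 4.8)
\emph{Proof idea.}\ The plan is to translate the question into linear algebra over the coefficients of the projection. Fix a regular projection $t\mapsto\fgt\in U$ of $5_2$ with $\deg(f)=4$, $\deg(g)=5$; write $f(t)=\sum_{j=0}^{4}a_jt^j$ and $g(t)=\sum_{j=0}^{5}b_jt^j$, let $(s_1,t_1),\dots,(s_m,t_m)$ be its real double points (so $m=5$ if $\fgt\in U_1$ and $m=6$ if $\fgt\in U_2$), and let $\varepsilon=(\varepsilon_1,\dots,\varepsilon_m)\in\{\pm1\}^m$ record the over/under data that turns the diagram into $5_2$. Since the double points of $t\mapsto\fght$ are exactly those of $\fgt$ at which $h$ fails to separate the two strands, the map $t\mapsto\fght$ is an embedding representing $5_2$ precisely when (i) $\mathrm{sign}\big(h(t_i)-h(s_i)\big)=\varepsilon_i$ for $i=1,\dots,m$; (ii) $h$ does not re-glue the finitely many complex double points; and (iii) $\deg(h)=6$. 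Writing $h(t)=\sum_{j=0}^{6}c_jt^j$ and discarding the irrelevant $c_0$, condition (i) reads $M\mathbf{c}=\mathbf{e}$, where $\mathbf{c}=(c_1,\dots,c_6)$, $\mathbf{e}=(e_1,\dots,e_m)$ with $e_i:=h(t_i)-h(s_i)$ subject to $\mathrm{sign}(e_i)=\varepsilon_i$, and $M$ is the $m\times 6$ matrix whose $i$-th row is $v_i=\big(t_i-s_i,\ t_i^2-s_i^2,\ \dots,\ t_i^6-s_i^6\big)$.

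The essential observation is that the rows of $M$ are highly non-generic. At a double point $f(s_i)=f(t_i)$ and $g(s_i)=g(t_i)$, hence $v_i$ is orthogonal to both $(a_1,a_2,a_3,a_4,0,0)$ and $(b_1,b_2,b_3,b_4,b_5,0)$, which are linearly independent since $a_4\ne 0$ and $b_5\ne 0$ for every element of $U$. Thus all $v_i$ lie in a fixed $4$-dimensional subspace of $\ro^6$, so $\mathrm{rank}(M)\le 4$, and for a generic projection the rank is exactly $4$. Consequently the set of attainable $\mathbf{e}$ --- the column space of $M$ in $\ro^m$ --- is only $4$-dimensional, and its annihilator contains a nonzero $\lambda=(\lambda_1,\dots,\lambda_m)$ with $\sum_i\lambda_i v_i=0$, equivalently $\sum_i\lambda_i\big(p(t_i)-p(s_i)\big)=0$ for every polynomial $p$ of degree $\le 6$. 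A vector $\mathbf{e}$ with the prescribed signs can lie in this column space only if the hyperplane $\{\mathbf{e}:\lambda\cdot\mathbf{e}=0\}$ meets the open orthant $\{\mathbf{e}:\mathrm{sign}(e_i)=\varepsilon_i\}$, which happens exactly when the numbers $\lambda_i\varepsilon_i$ are not all of the same sign; the residual requirements (ii) and (iii) are open conditions that are automatically satisfied off a lower-dimensional set once (i) is solvable, and are disposed of by a routine transversality argument. Hence the theorem reduces to showing that, for a generic projection in $U$, the relation vector $\lambda$ satisfies $\mathrm{sign}(\lambda_i)=\varepsilon_i$ for all $i$ (up to an overall sign). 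When $m=6$ the annihilator is $2$-dimensional, say spanned by $\lambda$ and $\mu$, and the analogous criterion is that the six planar vectors $(\varepsilon_i\lambda_i,\varepsilon_i\mu_i)$ all lie in a closed half-plane through the origin --- a stronger requirement, so this case needs no separate treatment once the first is handled.

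The decisive, and hardest, step is therefore to pin down the sign pattern of $\lambda$. I would make $\lambda$ explicit via the resultant $\Gamma=\mathrm{Res}_t\big(F(s,t),G(s,t)\big)$, $F(s,t)=\frac{f(s)-f(t)}{s-t}$, $G(s,t)=\frac{g(s)-g(t)}{s-t}$: the parameters $s_1,t_1,\dots,s_m,t_m$ are among the real roots of $\Gamma$, and, setting $Q(t)=\prod_{i}(t-s_i)(t-t_i)$, the unique (up to scale) relation on the evaluations of degree-$\le 6$ polynomials at these roots can be read off from the partial-fraction expansion of $p/Q$, expressing each $\lambda_i$ as an explicit rational function of the $s_i,t_i$ whose sign is governed by the way the roots $s_i$ and their companions $t_i$ interleave along $\ro$. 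I would then verify, on the explicit projection $t\mapsto\big(2(t-2)(t+4)(t^2-11),\,t(t^2-6)(t^2-16)\big)$ of Proposition \ref{thm6} (and on a representative with six real nodes for the $U_2$ case), that $\mathrm{sign}(\lambda_i)$ agrees with the crossing datum $\varepsilon_i$ forced by the $5_2$ diagram; since the rank of $M$ and the signs of the entries of $\lambda$ are locally constant on $U$, this propagates to all nearby projections, and running over representatives of the relevant components of $U$ yields the generic statement. I expect this last step to be the genuine obstacle: a bare dimension count gives only $\mathrm{rank}(M)\le 4$, which by itself would still permit the $4$-plane to meet the $\varepsilon$-orthant, so one must actually exploit the cyclic order of the crossings along the knot (governed by $\Gamma$) together with the alternating Gauss code of $5_2$ to see that the entries of $\lambda$ are forced to line up with $\varepsilon$; everything else is elementary linear algebra and standard genericity.
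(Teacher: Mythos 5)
Your linear-algebra setup is essentially the paper's: the paper first normalizes $h$ by an affine change so that $h(t)=c_6t^6+c_3t^3+c_2t^2+c_1t$, which is exactly equivalent to your observation that each row $\big(t_i-s_i,\dots,t_i^6-s_i^6\big)$ is orthogonal to the coefficient vectors of $f$ and $g$, so that the attainable vectors $\big(h(t_i)-h(s_i)\big)_i$ form a subspace of dimension at most $4$. The divergence is in how the argument is finished. The paper does \emph{not} analyze whether this subspace meets the sign orthant: it prescribes the actual values $h(t_i)-h(s_i)=e_ir_i$ with the $r_i>0$ treated as additional free parameters, notes that solvability forces the augmented matrix $\tilde A$ (resp.\ $\tilde B_e$) to drop rank, and observes that $\det(\tilde A)\neq 0$ (resp.\ full rank of $\tilde B_e$) is a non-constant analytic condition on $U_1\times(\mathbb{R}^+)^5$ (resp.\ $U_e\times(\mathbb{R}^+)^6$), hence holds on an open dense set; genericity is thus taken jointly in the projection \emph{and} the auxiliary targets $r_i$. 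Your plan instead goes for the stronger assertion that, for a generic projection, the $4$-dimensional column space misses the entire open orthant $\{\operatorname{sign}(e_i)=\varepsilon_i\}$, which you correctly reduce (when the rank is exactly $4$ and no $\lambda_i$ vanishes) to a sign condition on the annihilator vector(s) $\lambda$.

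That reduction is fine, but the decisive claim --- that generically $\operatorname{sign}(\lambda_i)=\pm\varepsilon_i$ (or, for six crossings, that some vector in the annihilator plane has this sign pattern) --- is never established, and you say so yourself. The proposed route is to compute $\lambda$ for the single explicit projection of Proposition \ref{thm6} and propagate by local constancy of the signs, but (i) the computation is not carried out and there is no a priori reason it comes out as desired, since this is strictly stronger than what the paper's analyticity argument yields; (ii) sign propagation works only within one connected component of the generic stratum of $U$, so you would have to enumerate and check representatives of all components of $U_1$ and $U_2$ and all admissible patterns $e$, none of which is done; and (iii) you also need, and do not prove, that generically $\operatorname{rank}(M)=4$ with all $\lambda_i\neq 0$ --- without this, both the orthant criterion and the local constancy of the sign pattern break down. (A minor point: your condition (ii) about ``re-gluing complex double points'' is vacuous, since a real $h$ can only produce double points at real parameter pairs, which for a regular projection are exactly the $(s_i,t_i)$.) So as written the proposal has a genuine gap at its central step; with the paper's trick of pushing the targets $e_ir_i$ into the parameter space, your setup would close immediately, but that yields the paper's weaker joint-genericity conclusion rather than the orthant statement you set out to prove.
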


\begin{proof}
For $f(t)=\atfr$ and $g(t)=\btf$, let $t\mapsto\fgt$ be a regular projection of $5_2$ knot. Note that $a_4b_5\neq0$. Suppose $h(t)=\cts$ be a degree $6$ polynomial such that $t\mapsto\fght$ represents the knot $5_2$. By composing this embedding with a suitable affine transformation, we can assume that the coefficients $c_0$, $c_4$ and $c_5$ are zero. Thus, we can take $h(t)=c_6t^6+c_3t^3+c_2t^2+c_1t$. Note that the projection has either $5$ or $6$ double points. So, we consider the following cases:\vo

Case $i)$ If the projection has $5$ crossings:\\  
For $i\in\{1,2,\ldots,5\}$, let $(s_i, t_i)$ with $s_i<t_i$ and $s_1<s_2<\cdots<s_5$ be the pairs of parametric values at which the crossings occur in the curve $t\mapsto\fgt$. Since we want alternatively over and under crossings, so we should have $h(t_i)-h(s_i)$ is positive if $i$ is odd (i.e. the crossing is under crossing) and it is negative if $i$ is even (i.e. the crossing is over crossing). In other words, we have to find the coefficients $c_1,$ $c_2$, $c_3$ and $c_6$  such that for $i\in\{1,2,\ldots,5\}$ and $r_i\in\ro^+$, $h(t_i)-h(s_i)= r_i$ if $i$ is odd and $h(t_i)-h(s_i)= -r_i$ if $i$ is even. This gives us a system of $5$ linear equations in $4$ unknowns as follows:
\begin{eqnarray}
c_6(t_1^6-s_1^6)+c_3(t_1^3-s_1^3)+c_2(t_1^2-s_1^2)+c_1(t_1-s_1)&=& r_1\label{eq2.1}\\
c_6(t_2^6-s_2^6)+c_3(t_2^3-s_2^3)+c_2(t_2^2-s_2^2)+c_1(t_2-s_2)&=& -r_2\\
c_6(t_3^6-s_3^6)+c_3(t_3^3-s_3^3)+c_2(t_3^2-s_3^2)+c_1(t_3-s_3)&=& r_3\\
c_6(t_4^6-s_4^6)+c_3(t_4^3-s_4^3)+c_2(t_4^2-s_4^2)+c_1(t_4-s_4)&=& -r_4\\
c_6(t_5^6-s_5^6)+c_3(t_5^3-s_5^3)+c_2(t_5^2-s_5^2)+c_1(t_5-s_5)&=& r_5\label{eq2.5}
\end{eqnarray}
\noindent The rank of the coefficient matrix 
\[
A=
\left[ {\begin{array}{cccc}
t_1^6-s_1^6 & t_1^3-s_1^3 & t_1^2-s_1^2 & t_1-s_1 \\
t_2^6-s_2^6 & t_2^3-s_2^3 & t_2^2-s_2^2 & t_2-s_2 \\
t_3^6-s_3^6 & t_3^3-s_3^3 & t_3^2-s_3^2 & t_3-s_3 \\
t_4^6-s_4^6 & t_4^3-s_4^3 & t_4^2-s_4^2 & t_4-s_4 \\
t_5^6-s_5^6 & t_5^3-s_5^3 & t_5^2-s_5^2 & t_5-s_5 \\
\end{array} } \right]
 \]\vw
\noindent of the system of linear equations \ref{eq2.1} - \ref{eq2.5} is at most $4$. The system has a solution if and only if the rank of $A$ is equal to the rank of the augmented matrix $\tilde{A}$. In other words, the system of linear equations \ref{eq2.1} - \ref{eq2.5} has no solution if  $\det\hs(\tilde{A})\neq 0$. For $j\in\{1,2,\ldots,5\}$, let $A_j$ be a submatrix of $A$ obtained by deleting the $j^{th}$ row of $A$. It is easy to see that $$\det\hs(\tilde{A})=r_1\hs\det(A_{1})+r_2\hs\det(A_{2})+r_3\hs\det(A_{3}) +r_4\hs\det(A_{4})+r_5\det\hs(A_{5})\hp.$$ Note that for each $j$, $\det\hs(A_j)$ is an algebraic function of $t_i$'s and $s_i$'s which are actually analytic functions of the coefficients $a_k$'s of $f$ and the coefficients $b_k$'s of $g$. Thus $\det\hs(\tilde{A})$ is a non-constant analytic function of $a_k$'s, $b_k$'s and $r_k$'s. Hence the set $$V_1=\{\hf(\hp a_0,\ldots,a_4,b_0,\ldots,b_5, r_1,\ldots,r_5\hp)\in U_1\times(\mathbb{R}^+)^5\mid\det(\tilde{A})\neq0\hf\}$$ is an open and a dense subset of $U_1\times(\mathbb{R}^+)^5$. For any choice of an element in $V_1$, the system of linear equations \ref{eq2.1} - \ref{eq2.5} has no solution. Therefore, for a generic choice of a regular projection $t\mapsto\fgt$ with $5$ crossings and having $\deg(f)=4$ and $\deg(g)=5$, there does not exist any polynomial $h$ of degree $6$ such that $t\mapsto\fght$ represents the knot $5_2$.\vo

Case $ii)$ If the projection has $6$ crossings:\\  
For $i\in\{1,2,\ldots,6\}$, let $(s_i, t_i)$ with $s_i<t_i$ and $s_1<s_2<\cdots<s_6$ be the pairs of parametric values at which the crossings occur in the curve $t\mapsto\fgt$. Let $e=(e_1,e_2,\ldots,e_6)$ be a pattern such that this together with the projection $t\to\fgt$ describe the knot $5_2$, where $e_i$ is either $1$ or $-1$ according to which the $i^{th}$ crossing is under crossing or over crossing. Let $U_e$ be a set of elements $\abf$ of $U_2$ such that the projection $t\to\abtf$ together with the pattern $e$ describe the knot $5_2$. We want to find the values of the coefficients $c_1,$ $c_2$, $c_3$ and $c_6$ such that for $i\in\{1,2,\ldots,6\}$ and $r_i\in\ro^+$, $h(t_i)-h(s_i)= e_ir_i$. This gives us a system of $6$ linear equations in $4$ unknowns as follows:
\begin{eqnarray}
c_6(t_1^6-s_1^6)+c_3(t_1^3-s_1^3)+c_2(t_1^2-s_1^2)+c_1(t_1-s_1)&=& e_1r_1\label{eq2.6}\\
c_6(t_2^6-s_2^6)+c_3(t_2^3-s_2^3)+c_2(t_2^2-s_2^2)+c_1(t_2-s_2)&=& e_2r_2\\
c_6(t_3^6-s_3^6)+c_3(t_3^3-s_3^3)+c_2(t_3^2-s_3^2)+c_1(t_3-s_3)&=& e_3r_3\\
c_6(t_4^6-s_4^6)+c_3(t_4^3-s_4^3)+c_2(t_4^2-s_4^2)+c_1(t_4-s_4)&=& e_4r_4\\
c_6(t_5^6-s_5^6)+c_3(t_5^3-s_5^3)+c_2(t_5^2-s_5^2)+c_1(t_5-s_5)&=& e_5r_5\\
c_6(t_6^6-s_6^6)+c_3(t_6^3-s_6^3)+c_2(t_6^2-s_6^2)+c_1(t_6-s_6)&=& e_6r_6\label{eq2.11}
\end{eqnarray}
\noindent The rank of the coefficient matrix 
\[
B=
\left[ {\begin{array}{cccc}
t_1^6-s_1^6 & t_1^3-s_1^3 & t_1^2-s_1^2 & t_1-s_1 \\
t_2^6-s_2^6 & t_2^3-s_2^3 & t_2^2-s_2^2 & t_2-s_2 \\
t_3^6-s_3^6 & t_3^3-s_3^3 & t_3^2-s_3^2 & t_3-s_3 \\
t_4^6-s_4^6 & t_4^3-s_4^3 & t_4^2-s_4^2 & t_4-s_4 \\
t_5^6-s_5^6 & t_5^3-s_5^3 & t_5^2-s_5^2 & t_5-s_5 \\
t_6^6-s_6^6 & t_6^3-s_6^3 & t_6^2-s_6^2 & t_6-s_6 \\
\end{array} } \right]
 \]\vw
\noindent of the system of linear equations \ref{eq2.6} - \ref{eq2.11} is at most $4$. The system has a solution if and only if the rank of $B$ is equal to the rank of the augmented matrix $\tilde{B_e}$ (where the subscript $e$ denotes the dependence of $\tilde{B_e}$ on the pattern $e$). Thus, system of linear equations \ref{eq2.6} - \ref{eq2.11} has no solution if $\tilde{B_e}$ has full rank (i.e. $rank(\tilde{B_e})=5$).

Note that for each $i$, $t_i$ and $s_i$ are analytic functions of the coefficients $a_k$'s of $f$ and the coefficients $b_k$'s of $g$. Thus $\tilde{B_e}$ is a non-constant analytic function of $a_k$'s, $b_k$'s and $r_k$'s. Hence the set $$V_e=\{\hf(\hp a_0,\ldots,a_4,b_0,\ldots,b_5, r_1,\ldots,r_6\hp)\in U_e\times(\mathbb{R}^+)^6\mid\tilde{B_e}\hskip1.3mm \mbox{has full rank}\hf\}$$ is an open and a dense subset of $U_e\times(\mathbb{R}^+)^6$. It is clear that, for any choice of an element in $V_e$, the system of linear equations \ref{eq2.6} - \ref{eq2.11} has no solution.\vo
 
Now consider the disjoint union $V_2=\bigsqcup_eV_e$ which is clearly an open and a dense subset of the disjoint union $U_3=\bigsqcup_eU_e\times(\mathbb{R}^+)^6$, where both the unions are taken over all the patterns $e$. Note that $U_2\times(\mathbb{R}^+)^6\subseteq U_3$. It is easy to see that, for any choice of an element in $V_2$, the corresponding system of linear equations has no solution. Hence, for a generic choice of a regular projection $t\mapsto\fgt$ with $6$ crossings and having $\deg(f)=4$ and $\deg(g)=5$, there does not exist a polynomial $h$ of degree $6$ such that $t\mapsto\fght$ represents the knot $5_2$.
\end{proof} 
 
\begin{conjecture}\label{coj1}
 The knot $5_2$ can not be realized in the space $\ps$.
\end{conjecture}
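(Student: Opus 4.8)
By the normalisation recalled in Section~\ref{sec1}, any polynomial representation of $5_2$ in $\ps$ is equivalent to one with degree sequence exactly $(4,5,6)$, so the plan is to rule out a polynomial knot $\phi=\fgh$ with $\deg f=4$, $\deg g=5$, $\deg h=6$ representing $5_2$. The cleanest route is through Proposition~\ref{thm2}(3): for such a $\phi$ every directional height $v_1f+v_2g+v_3h$ has degree at most $6$, hence a derivative of degree at most $5$ and at most three local maxima, so \emph{every} knot realised in $\ps$ has superbridge index at most $3$. It would therefore be enough to prove $s[5_2]=4$, i.e.\ to show that $5_2$ is not one of the genuine $3$-superbridge knots among the Jin--Jeon candidates \cite{jj1} and to supply a Kuiper-type lower bound $s[5_2]\ge4$ like the one available for the torus knot $5_1$. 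I expect this to be the decisive input: if $s[5_2]=4$ the conjecture follows at once, whereas if $s[5_2]=3$ the superbridge bound is silent — as it already is for the absence of the figure eight knot from $\pf$ — and one must argue directly.

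For a direct argument the task is to promote Theorem~\ref{thm7} from a \emph{generic} to a \emph{universal} non-existence statement. First I would handle the case in which the shadow $t\mapsto\fgt$ of a hypothetical $\phi$ is not a regular diagram of $5_2$ (a cusp, where $f'$ and $g'$ vanish together; a triple point; a tangency; or a degeneracy at infinity) by passing to the coordinate projections $t\mapsto\big(f(t),h(t)\big)$ and $t\mapsto\big(g(t),h(t)\big)$ of degree pairs $(4,6)$ and $(5,6)$, bounding their double points by B\'ezout (cf.\ \cite{do}, Lemma~4), and using $c[5_2]=5$ together with Proposition~\ref{thm5}. In the remaining case $(f,g)\in U=U_1\cup U_2$ and, after the affine normalisation of Theorem~\ref{thm7} with $h(t)=c_6t^6+c_3t^3+c_2t^2+c_1t$, the coefficients must solve system (\ref{eq2.1})--(\ref{eq2.5}) or (\ref{eq2.6})--(\ref{eq2.11}) for \emph{every} admissible $(f,g)$ and \emph{every} crossing pattern $e$ realising $5_2$. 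Writing $p_i^{(k)}=t_i^k-s_i^k$ for the crossing pairs, the relations $f(s_i)=f(t_i)$ and $g(s_i)=g(t_i)$ (with $a_4,b_5\neq0$) force $p^{(4)},p^{(5)}\in\mathrm{span}\{p^{(1)},p^{(2)},p^{(3)}\}$, so the column space of the coefficient matrix is $\mathrm{span}\{p^{(1)},p^{(2)},p^{(3)},p^{(6)}\}$, of dimension at most $4$, and solvability with the prescribed over/under signs is equivalent to this subspace meeting the open ``sign cone'' $\{(e_ir_i)_i:r_i>0\}$. In the $5$-crossing case this reduces, exactly as in the proof of Theorem~\ref{thm7}, to showing that the vector $\alpha$ (generically unique up to scale) with $\sum_i\alpha_i\,p_i^{(k)}=0$ for $0\le k\le6$ has sign pattern $e$ or $-e$; in the $6$-crossing case, to an analogous two-dimensional half-plane condition on a pair $\alpha,\beta$ of such vectors.

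The hard part will be precisely this sign statement. The proof of Theorem~\ref{thm7} uses only that the augmented determinant is a non-constant analytic function, which gives genericity but no control of its zero set; a genuine proof must show that for \emph{every} degree-$(4,5)$ shadow carrying a $5_2$ diagram the vector $\alpha$ has its signs arranged exactly as the crossing data $e$ of $5_2$ (up to overall sign). This has the flavour of a total-positivity phenomenon: the Vandermonde-type structure behind the $p_i^{(k)}$ ought to constrain the signs of the ``weights'' $\alpha_i$ in the way dictated by the $5_2$ diagram, and I would try to extract this from the defining relations $\sum_{k=1}^4 a_kp^{(k)}=0$, $\sum_{k=1}^5 b_kp^{(k)}=0$ together with the Pecker-type control of node positions used in Proposition~\ref{thm6}. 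Failing a conceptual argument, the parameter space of $5_2$ projections is semialgebraic of modest dimension, so the $5$- and $6$-crossing sign conditions could in principle be certified by quantifier elimination over each of the finitely many admissible patterns $e$; since either this or a determination of $s[5_2]$ settles the conjecture on its own, I would pursue the two in parallel.
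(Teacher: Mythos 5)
You have not proved the statement, and it is worth being explicit that the paper does not either: this is stated as a conjecture, and the paper's only support for it is exactly the two ingredients you identify, namely Theorem~\ref{thm7} (non-existence of a degree-$6$ lift for a \emph{generic} degree-$(4,5)$ shadow) and the remark that the conjecture would follow from the expectation that $3_1$ and $4_1$ are the only $3$-superbridge knots. Your route (1) is precisely that second remark: by Proposition~\ref{thm2}(3) any knot in $\ps$ has superbridge index at most $3$, so $s[5_2]=4$ would settle everything; but $s[5_2]$ is not known (the paper says so), and, as you yourself note, if $s[5_2]=3$ this route is silent. Your route (2) is a sharpening of Theorem~\ref{thm7}, and your linear-algebra reduction is correct and genuinely cleaner than the paper's augmented-matrix formulation: from $\sum_{k=1}^{4}a_kp^{(k)}=0$ and $\sum_{k=1}^{5}b_kp^{(k)}=0$ with $a_4b_5\neq0$ one does get $p^{(4)},p^{(5)}\in\mathrm{span}\{p^{(1)},p^{(2)},p^{(3)}\}$, and in the $5$-crossing case non-realizability is equivalent to the left-kernel vector $\alpha$ of the coefficient matrix having sign pattern $\pm e$. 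But that sign statement, asserted for \emph{every} admissible shadow and every pattern $e$ realising $5_2$, is exactly the content that separates the generic Theorem~\ref{thm7} from the conjecture, and you offer no argument for it beyond the hope of a total-positivity phenomenon and a quantifier-elimination fallback that is a computation you have not carried out (and whose input, the semialgebraic description of all $5_2$-shadows with degrees $(4,5)$, including the non-regular ones you defer to B\'ezout-type counting, is itself nontrivial to pin down).

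So the gap is concrete: both branches of your plan terminate in open problems --- determining $s[5_2]$, or establishing the universal sign condition on $\alpha$ (respectively the two-dimensional analogue in the $6$-crossing case) --- and neither is addressed by anything in the paper or in your proposal. What you have written is a faithful reconstruction of the evidence the paper itself gives for leaving this as Conjecture~\ref{coj1}, together with a correct reformulation of what a genuine proof would have to show; presented as a proof it is incomplete, and presented as a review of the paper's argument it should say plainly that no proof exists there to compare against.
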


It is conjectured that the only 3-superbridge knots are $3_1$ and $4_1$ \cite{jw}. Once it is proved, then the conjecture \ref{coj1} will follow trivially. Similarly, we conjecture that the knots $6_1, 6_2$ and $6_3$ can not be realized in $\ps$.
 
\section{Spaces of polynomial knots}\label{sec3}

\subsection{The spaces $\pd$ and $\pdt$}\label{sec3.1}

\noindent\hskip6mm For a positive integer $d$, Derfee and O'Shea \cite{do} have discussed the topology (it is inherited from $\ro^{3d+3}$) of the space $\kd$ of all polynomial knots with degree $d$ (that is, the degrees of the component polynomials are at most $d$ and at least one of the degree is $d$). Let $\spk$ denote the set of all polynomial knots. We can write $$\spk=\bigcup_{d\geq1}\kd\hf.$$ This set can be given the inductive limit topology; that is, a set $U\subseteq\spk$ is open in $\spk$ if and only if the set $U\cap\kd$ is open in $\kd$ for all $d\geq1$. Thus, we have the space $\spk$ of all polynomial knots.

\begin{definition}\label{def3.1}
Two polynomial knots $\phi,\psi\in\spk$ are said to be {\sf polynomially isotopic} if there exists a one parameter family $\big\{\hf\Phi_s\in\spk\mid\! s\in\zo\hf\big\}$ of polynomial knots such that $\Phi_0=\phi$ and $\Phi_1=\psi$.
\end{definition}

In the definition above, one has to note that a map $\Phi:\zo\times\ro\rightarrow\rt$ defined by $s\mapsto\Phi_s$ is continuous. It has been proved that if two polynomial knots are topologically equivalent as long knots  then they are polynomially isotopic \cite{rs1}. Any polynomial isotopy within the space $\kd$ for a fixed $d$ gives rise to a smooth path inside $K_d$. However, two polynomial knots belonging to two different spaces $\kd$ and $\mathcal{K}_{d'}$ do not belong to the same path component of the space $\spk$ even though they are polynomially isotopic or topologically equivalent. This motivates us to define a equivalence based on knots belonging to the same path component inside a space of polynomial knots with a given condition on the degrees of the component polynomials. Recall that $\pd$ be the space of all polynomial knots $\sigma=\fgh$ with $\fghio$ and $\pdt$ be the space of all polynomial knots $\varsigma=\fghp$ with $\deg(f')=d-2,\,\deg(g')=d-1$\hskip1.5mm and\hskip1.5mm$\deg(h')=d$.

\begin{definition}
Two polynomial knots are said to be {\sf path equivalent} in $\pd$ if they belong to the same path component of the space $\pd$.
\end{definition}

Similarly, the path equivalence can be defined for the spaces $\pdt$ and $\kd$. Also, it can be defined for the space $\spk$ of all polynomial knots. Using advanced techniques of differential topology Durfee and O'Shea gave a proof (see \cite{do}, proposition 9) for the following fact:

\begin{proposition}\label{thm8}
If two polynomial knots are path equivalent in $\kd$, then they are topologically equivalent.
\end{proposition}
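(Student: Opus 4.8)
The plan is to show that the knot type is \emph{locally constant} along any path in $\kd$. Suppose $\phi$ and $\psi$ lie in the same path component of $\kd$, and let $\{\Phi_s\}_{s\in\zo}$ be a path with $\Phi_0=\phi$ and $\Phi_1=\psi$; this is precisely a polynomial isotopy (Definition \ref{def3.1}) that happens to stay inside $\kd$. By Proposition \ref{thm1} we may freely pass between a long knot and its extension $\tilde\Phi_s:\so\to\st$, and between topological equivalence and ambient isotopy. It therefore suffices to prove: for every $s_0\in\zo$ there is $\varepsilon>0$ such that $|s-s_0|<\varepsilon$ implies $\Phi_s$ is ambient isotopic to $\Phi_{s_0}$. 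Once this is known, the set of $s$ with $\Phi_s$ equivalent to $\phi$ is open with open complement, hence (by connectedness of $\zo$ and $0$ being in it) equal to $\zo$, and in particular $\psi$ is equivalent to $\phi$.

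Fix $s_0$ and write $\Phi_{s_0}=(f,g,h)$. Since a polynomial knot is a long knot, $\|\Phi_{s_0}(t)\|\to\infty$ and is strictly monotone for $|t|$ large. The coefficients of $\Phi_s$ are exactly its coordinates in $\rtd$, so they depend continuously on $s$, while the polynomial degree stays equal to $d$; hence $\|\Phi_s(t)\|^2$ is a polynomial in $t$ of degree $2d$ whose leading coefficient is $a_d^2+b_d^2+c_d^2>0$ and varies continuously with $s$. Consequently we may choose $T>0$, $R>0$ and $\varepsilon>0$ so that for all $s$ with $|s-s_0|<\varepsilon$ one has $\Phi_s([-T,T])\subset B_R$ (the open ball of radius $R$), $\|\Phi_s(t)\|>R$ for $|t|\ge T$, and $\frac{d}{dt}\|\Phi_s(t)\|^2\ne 0$ for $|t|\ge T$; i.e. all these knots sit inside $B_R$ on $[-T,T]$ and leave it through radially monotone tails.

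On the compact interval $[-T,T]$ every derivative of $\Phi_s$ converges uniformly (as $s\to s_0$) to the corresponding derivative of $\Phi_{s_0}$, so for $\varepsilon$ small $\Phi_s|_{[-T,T]}$ is $C^1$-close to $\Phi_{s_0}|_{[-T,T]}$ inside $B_R$. By the tubular neighbourhood theorem together with the isotopy extension theorem there is then a compactly supported diffeotopy of $\rt$, supported near $\Phi_{s_0}([-T,T])$, carrying $\Phi_{s_0}|_{[-T,T]}$ onto $\Phi_s|_{[-T,T]}$ up to an orientation-preserving reparametrisation of $[-T,T]$. For the tails one uses that a radially monotone proper arc in the region $\{\,\|x\|\ge R\,\}$ issuing from the sphere $\{\,\|x\|=R\,\}$ is ambient isotopic, rel a collar of that sphere and rel the point at infinity, to a standard radial ray; hence two such tails whose initial points and initial directions are close are joined by a diffeotopy supported off $B_R$. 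Composing the two diffeotopies, arranged to have essentially disjoint supports, produces orientation-preserving diffeomorphisms $F$ of $\ro$ and $H$ of $\rt$ with $H\circ\Phi_{s_0}=\Phi_s\circ F$, so $\Phi_{s_0}$ and $\Phi_s$ are topologically equivalent (Definition \ref{def2}). This gives local constancy and the proposition. An alternative route is to apply Thom's first isotopy lemma to the tautological family $(\varphi,t)\mapsto(\varphi,\varphi(t))$ over $\kd$, suitably compactified to a family of maps $\so\to\st$, whose fibrewise injectivity yields local triviality of the fibrewise knot type over $\kd$.

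\textbf{Main obstacle.} The delicate point is the behaviour at infinity: one must control the tails \emph{uniformly} for $s$ near $s_0$ and make precise that nearby radially monotone tails are ambient isotopic rel the sphere at infinity, equivalently that $\{\tilde\Phi_s\}$ is a genuine isotopy of embeddings $\so\hookrightarrow\st$ to which an isotopy extension theorem applies despite the possible algebraic (hence non-smooth) singularity of each $\tilde\Phi_s$ at the north pole. This is exactly the step requiring the differential-topology machinery; matching up the domain reparametrisations obtained on the compact part and on the tails is a further, but routine, bookkeeping matter.
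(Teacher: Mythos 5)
The first thing to note is that the paper does not actually prove Proposition \ref{thm8}: it is quoted from Durfee and O'Shea (\cite{do}, Proposition 9), and the text explicitly defers to their ``advanced techniques of differential topology''. So your proposal is not an alternative to an argument given in the paper; it is an attempt to reconstruct the outsourced one, and the strategy you choose --- local constancy of the knot type along the path, with the compact part handled by $C^1$-closeness plus tubular neighbourhood and isotopy extension, and the tails handled via radial monotonicity coming from the non-vanishing top coefficient of $\|\Phi_s(t)\|^2$ --- is the same kind of argument the cited reference carries out. The reduction of path equivalence to local constancy (openness of each equivalence class plus connectedness of $\zo$) is correct, and the uniform choice of the radial-monotonicity region for $s$ near $s_0$ is legitimate because in $\kd$ the degree stays exactly $d$.

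As a self-contained proof, however, it is incomplete, and the incompleteness sits exactly where you flag it yourself. The claims that two nearby radially monotone tails are ambient isotopic rel a collar of the sphere, and --- more importantly --- that this tail diffeotopy can be glued to the compactly supported diffeotopy obtained on $[-T,T]$ so that the various reparametrisations assemble into a single orientation-preserving diffeomorphism $F$ of $\ro$ and diffeomorphism $H$ of $\rt$ with $H\circ\Phi_{s_0}=\Phi_s\circ F$, are precisely the content that needs proving: the compact-part isotopy moves the endpoints $\Phi_{s_0}(\pm T)$ and the directions there, and one must check that this is compatible with the straightening of the tails (equivalently, that $\{\tilde\Phi_s\}$ can be promoted to an ambient isotopy of $\st$ despite the possible algebraic singularity at the north pole). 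Calling this ``routine bookkeeping'' leaves the proposition unproved; it is the very step the paper chose to cite rather than prove. There is also a small internal inconsistency in your set-up: you require both $\Phi_s([-T,T])\subset B_R$ and $\|\Phi_s(t)\|>R$ for $|t|\ge T$, which contradict each other at $t=\pm T$. This is easily repaired (demand radial monotonicity for $|t|\ge T$ and $\|\Phi_s(t)\|>R$ only for $|t|\ge T'$ with $T'>T$ chosen uniformly in $s$), but as written the constants cannot be chosen. In short: right strategy, consistent with the proof the paper points to, but the delicate part of the argument is asserted rather than carried out.
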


\begin{corollary}\label{cor0}
If two polynomial knots are path equivalent in $\spk$, then they are topologically equivalent.
\end{corollary}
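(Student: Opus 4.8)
The plan is to reduce the statement to Proposition~\ref{thm8} by showing that a path in $\spk$ can never leave a single stratum $\kd$. Unwinding the definitions, if $\phi$ and $\psi$ are path equivalent in $\spk$ then there is a continuous map $\gamma\colon\zo\to\spk$ with $\gamma(0)=\phi$ and $\gamma(1)=\psi$. The structural input I would isolate first is that the strata $\kd$ ($d\ge 1$) are pairwise disjoint — a polynomial knot has a well-defined polynomial degree, hence lies in exactly one $\kd$ — and that each $\kd$ is open, indeed clopen, in $\spk$. Openness is immediate from the definition of the inductive limit topology: for a fixed $d$, the intersection $\kd\cap\mathcal{K}_{d'}$ equals $\kd$ when $d'=d$ and $\emptyset$ otherwise, so it is open in $\mathcal{K}_{d'}$ for every $d'$; the complement $\bigcup_{d'\ne d}\mathcal{K}_{d'}$ is then a union of open sets, so $\kd$ is closed as well. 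The same computation shows that a subset of $\kd$ is open in $\spk$ precisely when it is open in $\kd$, i.e. the subspace topology on $\kd$ inherited from $\spk$ is exactly its original topology coming from $\ro^{3d+3}$.

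With this in hand the rest is purely point-set. The sets $\gamma^{-1}(\kd)$, $d\ge 1$, are open, pairwise disjoint, and cover the connected space $\zo$, so exactly one of them is nonempty and equals $\zo$; thus $\gamma(\zo)\subseteq\mathcal{K}_{d_0}$ for a single $d_0$, and in particular $\phi=\gamma(0)$ and $\psi=\gamma(1)$ both lie in $\mathcal{K}_{d_0}$. Since the $\spk$-subspace topology on $\mathcal{K}_{d_0}$ is its intrinsic one, $\gamma$ is a path inside $\mathcal{K}_{d_0}$ in the sense used in Proposition~\ref{thm8}, so $\phi$ and $\psi$ are path equivalent in $\mathcal{K}_{d_0}$; Proposition~\ref{thm8} then yields that they are topologically equivalent.

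The only place that requires any care — and it is bookkeeping rather than a genuine obstacle — is verifying that the inductive limit topology restricts to the familiar topology on each $\kd$ and does not fuse distinct strata; once that is checked, connectedness of $\zo$ does all the work. A mild variant would instead note that $\gamma(\zo)$ is compact and invoke the standard fact that a compact subset of an inductive limit of this type meets only finitely many $\kd$, then use connectedness to cut down to one stratum; I would present the clopen-partition version, since it appeals to nothing beyond the definition of the topology on $\spk$.
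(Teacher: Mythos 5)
Your argument is correct and is essentially the paper's own proof: both rest on the observation that the strata $\kd$ are pairwise disjoint and clopen in $\spk$, so by connectedness of $\zo$ any path stays in a single $\mathcal{K}_{d_0}$, after which Proposition~\ref{thm8} applies. You merely spell out the clopen verification and the agreement of the subspace topology in more detail than the paper does.
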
  

\begin{proof}
Since all the sets $\kd$, for $d\geq1$, are open and closed in $\spk$, so any path in $\spk$ joining two polynomial knots lies wholly in $\kd$ for some $d\geq1$. Thus, if two polynomial knots  are path equivalent in $\spk$, then they are so in $\kd$ for some $d\geq1$ and hence they are topologically equivalent. \end{proof}

\begin{corollary}\label{cor1}
If two polynomial knots are path equivalent in $\pdt$, then they are topologically equivalent. 
\end{corollary}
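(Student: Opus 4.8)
The plan is to observe that $\pdt$ is, in the appropriate sense, a topological subspace of $\kd$, so that any path lying inside $\pdt$ is automatically a path inside $\kd$, and then to invoke Proposition \ref{thm8}.

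First I would make the inclusion precise. An element $\varsigma=\fgh$ of $\pdt$ has $\deg(f)=d-2$, $\deg(g)=d-1$ and $\deg(h)=d$; in particular its polynomial degree is exactly $d$, so $\varsigma$ is a genuine element of $\kd$. Describing $\varsigma$ by its $3d$ coefficients identifies $\pdt$ with a subset of $\rtd$, whereas describing the same polynomial knot by the $3d+3$ coefficients of three polynomials of degree at most $d$ identifies it with a subset of $\ro^{3d+3}$; the passage from the first description to the second is simply the linear coordinate embedding $\rtd\hookrightarrow\ro^{3d+3}$ that inserts zeros in the three ``missing'' top slots (the $t^{d-1}$ and $t^d$ coefficients of $f$ and the $t^d$ coefficient of $g$). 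Being the restriction of a linear map between Euclidean spaces, this embedding is continuous, and it carries $\pdt$ into $\kd$. Hence the natural inclusion $\pdt\hookrightarrow\kd$ (each carrying its stated topology, inherited from $\rtd$ and from $\ro^{3d+3}$ respectively) is continuous.

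Consequently, if $\phi,\psi\in\pdt$ are path equivalent in $\pdt$, any path $\gamma:\zo\to\pdt$ joining them composes with the above continuous inclusion to give a path $\zo\to\kd$ joining $\phi$ and $\psi$ (each $\gamma(s)$ being a polynomial knot of degree exactly $d$, it indeed lands in $\kd$). Thus $\phi$ and $\psi$ are path equivalent in $\kd$, and Proposition \ref{thm8} shows that they are topologically equivalent, which is the assertion of the corollary. Alternatively, one may push the path all the way into $\spk$ and apply Corollary \ref{cor0}.

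There is no serious obstacle here; the only point that requires care is the bookkeeping of the second paragraph, namely verifying that on a polynomial knot with degree sequence $(d-2,\,d-1,\,d)$ the two ambient Euclidean topologies agree under the natural coordinate embedding, so that a path which is continuous for the $\pdt$-topology is also continuous for the $\kd$-topology.
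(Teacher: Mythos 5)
Your proof is correct and follows the same route as the paper: note that $\pdt\subset\kd$ so a path in $\pdt$ is a path in $\kd$, then apply Proposition \ref{thm8}. Your extra verification that the coordinate inclusion $\rtd\hookrightarrow\ro^{3d+3}$ is continuous is a welcome precision, but the argument is essentially identical.
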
  

\begin{proof}
Let $\phi$ and $\psi$ be polynomial knots belonging to a same path component of the space $\pdt$. Since $\pdt\subset\kd$, so $\phi$ and $\psi$ are members of $\kd$ belonging to its same path component. Therefore, by proposition \ref{thm8}, they are topologically equivalent.    
\end{proof}

\begin{theorem}\label{thm9}
Suppose $\phi=\fgh\in\pdt$ be a polynomial representation of a classical tame knot $\kappa$, then $\phi$ and its mirror image $\psi=\fgmh$ belong to the different path components of the space $\pdt$.
\end{theorem}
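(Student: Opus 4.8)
The plan is to exhibit a continuous function from $\pdt$ to the two-point discrete space $\{+1,-1\}$ that takes different values on $\phi$ and $\psi$; any such function is automatically constant on each path component, so this forces $\phi$ and $\psi$ into distinct components. The function I would use is the sign of the leading coefficient of the third component polynomial. Concretely, writing a generic element of $\pdt$ as
\[
t\mapsto\Big(\textstyle\sum_{i=0}^{d-2}a_it^i,\ \sum_{i=0}^{d-1}b_it^i,\ \sum_{i=0}^{d}c_it^i\Big),
\]
this is the assignment $\varsigma\mapsto\mathrm{sgn}(c_d)$.

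First I would check that $\mathrm{sgn}(c_d)$ is a well-defined continuous function on $\pdt$. Under the identification $\ad\cong\rtd$, the coefficient $c_d$ is literally one of the ambient coordinate functions, hence continuous; and by the very definition of $\pdt$ every member has third component of degree exactly $d$, so $c_d\neq 0$ on all of $\pdt$. Since $\mathrm{sgn}$ is continuous on $\mathbb{R}\setminus\{0\}$, the composite $\mathrm{sgn}(c_d)\colon\pdt\to\{+1,-1\}$ is continuous, hence locally constant, hence constant along every path that lies in $\pdt$.

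The remaining step is a one-line evaluation: if $h$ has leading coefficient $c_d$, then the third component $-h$ of $\psi=\fgmh$ has leading coefficient $-c_d$; thus the invariant equals $\mathrm{sgn}(c_d)$ at $\phi$ and $-\mathrm{sgn}(c_d)$ at $\psi$. As these are unequal (recall $c_d\neq 0$), there is no path in $\pdt$ joining $\phi$ to $\psi$, which is the assertion.

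I do not anticipate a genuine obstacle here; the essential — and only — point that matters is that $\pdt$ is precisely the locus on which $\deg(h)$ is pinned to the value $d$, which is exactly what makes $\mathrm{sgn}(c_d)$ a legitimate locally constant invariant. (On the larger space $\pd$, where only $\deg(h)\le d$ is imposed, the coefficient $c_d$ may pass through $0$ along a path, and this argument breaks down — which is why the statement is phrased for $\pdt$ rather than $\pd$.) It is worth noting that the argument makes no use of the knot type of $\kappa$, so the hypothesis that $\phi$ represents a given tame knot $\kappa$ plays no real role: when $\kappa$ is chiral the conclusion also follows from Corollary \ref{cor1} together with Remark \ref{rem5} (since then $\phi$ and $\psi$ represent inequivalent knot types), but the leading-coefficient argument handles amphichiral $\kappa$, such as the figure-eight knot, on an equal footing.
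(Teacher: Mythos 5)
Your proof is correct and is essentially the paper's argument: both hinge on the fact that the leading coefficient $c_d$ of the third component is a continuous, nowhere-vanishing coordinate function on $\pdt$ whose sign differs at $\phi$ and $\psi$. The paper merely phrases this via the intermediate value theorem applied to $\gamma_d(s)$ along a hypothetical path, whereas you package the same obstruction as the locally constant invariant $\mathrm{sgn}(c_d)$.
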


\begin{proof}
Suppose contrary that $\phi=\fgh$ and $\psi=\fgmh$ belong to the same path component of $\pdt$ and let $\Phi:\zo\to\pdt$ be a path from $\phi$ to $\psi$. For $s\in\zo$, let $\Phi_s=\Phi(s)$ and let it be given by
\begin{eqnarray*}
\Phi_s(t)&=&\big(\ho\alpha_{d-2}(s)t^{d-2}+\cdots+\alpha_1(s)t+\alpha_0(s),\hs\beta_{d-1}(s)t^{d-1}+\cdots+\beta_1(s)t+\beta_0(s),\\ & & \hskip3mm\gamma_d(s)t^d+\cdots+\gamma_1(s)t+\gamma_0(s)\ho\big)
\end{eqnarray*}
\noindent for $t\in\ro$. The maps $\alpha_i$'s, $\beta_i$'s and $\gamma_i$'s are continuous. Let $f,\, g$ and $h$ be given by
\begin{eqnarray*}
f(t)&=&\at\hf,\\ g(t)&=&\bt\hw \mbox{and}\\ h(t)&=&\ct\hf.
\end{eqnarray*}
\noindent Since $\Phi_0=\phi=\fgh$ and $\Phi_1=\psi=\fgmh$, so for each $i$, we have $\alpha_i(0)=\alpha_i(1)=a_i,\hs\beta_i(0)=\beta_i(1)=b_i,\hs\gamma_i(0)=c_i$ and $\gamma_i(1)=-c_i$\hf. In particular, $\gamma_d(0)=c_d$ and $\gamma_d(1)=-c_d$. Since $c_d\neq0$ and $\gamma_d$ is a continuous function, so by the intermediate value theorem, $\gamma_d(s_0)=0$ for some $s_0\in\ozo$. Therefore, the third component of $\Phi_{s_0}$ has degree less than $d$ and thus it does not a belong to the space $\pdt$. This is a contradiction to the fact that $\Phi$ is a path in $\pdt$.
\end{proof}

\begin{corollary}\label{cor2}
Let $\varphi=\fgh\in\pdt$ ($d\geq3$) be a polynomial representation of a tame knot $\kappa:\so\to\st$. Then we have the following:\vo
\noindent\hskip2.25mm i) If $\kappa$ is acheiral, then it corresponds to at least eight path components of $\pdt$.\vo  
\noindent\hskip1mm ii) If $\kappa$ is cheiral, then each $\kappa$ and $\kappa^*$ corresponds to at least four path components\vo\noindent\hskip6.6mm of the space $\pdt$.
\end{corollary}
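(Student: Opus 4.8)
The plan is to combine Theorem~\ref{thm9} with the four "coordinate sign-change" symmetries of $\pdt$ and count the distinct path components thereby produced. The key observation is that for $\phi=\fgh\in\pdt$, each of the four maps
\[
\phi\mapsto\mifgh,\qquad \phi\mapsto\fmgh,\qquad \phi\mapsto\fgmh,\qquad \phi\mapsto\minusfgh
\]
is a self-homeomorphism of the space $\pdt$ (each is the restriction of a linear automorphism of $\ro^{3d}$ that preserves the degree conditions $\deg f'=d-2$, $\deg g'=d-1$, $\deg h'=d$), and hence each permutes the path components of $\pdt$. By Remark~\ref{rem5}, the first three of these represent $[\kappa^*]$ while the fourth (being an orientation-reversing change in all coordinates, equivalently $\minusfgh$) represents $[\kappa]$ again; more precisely, $\minusfgh$ is ambient isotopic to $\fgh$ as a long knot.

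First I would establish, for the acheiral case (i), that $\phi$, $\mifgh$, $\fmgh$, $\fgmh$ lie in four pairwise distinct path components. Distinctness of $\phi$ and $\fgmh$ is exactly Theorem~\ref{thm9}. For the other pairs one argues analogously: if $\phi$ and $\mifgh$ were joined by a path $\Phi_s$ in $\pdt$, then (writing $\alpha_{d-2}(s)$ for the leading coefficient of the first component, which is nonzero throughout since $\Phi_s\in\pdt$) the intermediate value theorem applied to $\alpha_{d-2}$ would force $\alpha_{d-2}(s_0)=0$, a contradiction; the same works for $\fmgh$ via $\beta_{d-1}$, and combinations of two sign changes are handled by applying one of the appropriate coordinate automorphisms first and then invoking Theorem~\ref{thm9} (or the analogous statement). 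This gives four distinct components all representing $[\kappa]$ (since $\kappa$ is acheiral, $[\kappa^*]=[\kappa]$). Then I would apply the composite $\minusfgh$: it is a homeomorphism of $\pdt$ sending $\phi$ to a polynomial knot representing the same knot type $[\kappa]$, and I claim $\phi$ and $\minusfgh$ need not be separated by the leading-coefficient argument — rather, one checks that the component of $\minusfgh$ is distinct from all four already listed by tracking the sign-change pattern, or simply notes that applying each of the four sign maps to $\minusfgh$ gives four further candidates; the point is that the group generated by the four sign changes has order $8$ (it is $(\mathbb{Z}/2)^3$, since the product of all three is $\minusfgh$ and that is a fourth generator-relation, giving $2^3=8$ elements), and one shows this group acts freely on the set of path components containing $\phi$. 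Hence at least eight path components, all of type $[\kappa]$.

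For the cheiral case (ii), the same group of order eight acts, but now the orbit of $\phi$ splits: the four maps fixing the "parity class" of $[\kappa]$ (namely $\mathrm{id}$, and those compositions realizing an even number of the three "chirality-flipping" moves — concretely $\mifgh$ combined with $\fmgh$, etc.) keep us in components representing $[\kappa]$, while the other four (an odd number of flips, e.g. $\fgmh$) land in components representing $[\kappa^*]$. So $[\kappa]$ accounts for at least four path components and $[\kappa^*]$ for at least four more. I would write this by partitioning $(\mathbb{Z}/2)^3$ into the kernel and a coset of the homomorphism to $\mathbb{Z}/2$ recording the knot type, each of size four, and invoking the freeness of the action (which is precisely the repeated leading-coefficient / IVT argument of Theorem~\ref{thm9}) to conclude distinctness within each class.

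The main obstacle is the freeness of the $(\mathbb{Z}/2)^3$-action on path components — i.e. showing that no two of the eight images of $\phi$ accidentally lie in the same path component. Theorem~\ref{thm9} handles the single pair differing by $h\mapsto -h$; the pairs differing by $f\mapsto -f$ or $g\mapsto -g$ need the identical IVT argument on $\alpha_{d-2}$ or $\beta_{d-1}$ (valid precisely because the degree conditions defining $\pdt$ force these leading coefficients to stay nonzero along any path); and pairs differing by two simultaneous sign changes reduce to the one-sign-change case after precomposing with the corresponding automorphism. The only genuinely delicate point is that these arguments separate $\phi$ from $g_1\cdot\phi$ for each \emph{generator} $g_1$, and one must check that this suffices to separate all eight cosets — which follows because the separating invariants (signs of $\alpha_{d-2}$, $\beta_{d-1}$, $\gamma_d$, up to the homotopy obstruction) are themselves acted on by the group in the regular representation, so distinct group elements give distinct obstruction-tuples.
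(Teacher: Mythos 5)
Your overall strategy is the paper's: run the intermediate-value-theorem argument of Theorem \ref{thm9} on each leading coefficient to see that the eight knots $(e_1f,\,e_2g,\,e_3h)$, $e\in\{-1,1\}^3$, lie in eight distinct path components of $\pdt$, and then sort them according to whether they represent $[\kappa]$ or $[\kappa^*]$. The separation step is correct, and is in fact simpler than your discussion of ``freeness of the $(\mathbb{Z}/2)^3$-action'' suggests: along any path in $\pdt$ the leading coefficients $\alpha_{d-2}$, $\beta_{d-1}$, $\gamma_d$ never vanish, so by the IVT their sign triple is constant on each path component, and the eight knots above have eight pairwise distinct sign triples; nothing more is needed.

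There is, however, a genuine error in your opening paragraph: you assert that $\minusfgh$ ``represents $[\kappa]$ again'' and is ``ambient isotopic to $\fgh$ as a long knot.'' This is false, and it contradicts Remark \ref{rem5}, which you cite: $\minusfgh$ is obtained from $\phi$ by composing with $x\mapsto -x$, an orientation-reversing linear map of $\rt$, so it represents the mirror type $[\kappa^*]$. The correct rule is the parity rule you yourself invoke in the cheiral case: an odd number of coordinate sign changes (the three single flips and $\minusfgh$) reverses the ambient orientation and yields $[\kappa^*]$, while an even number ($\mathrm{id}$ and the three double flips) preserves it and yields $[\kappa]$. Since your cheiral-case paragraph uses this correct partition, the stated counts (at least eight components for an acheiral $\kappa$, at least four each for $\kappa$ and $\kappa^*$ when $\kappa$ is cheiral) do follow, but as written the argument is internally inconsistent and the sentence about $\minusfgh$ must be corrected before the proof is sound.
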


\begin{proof}
Using the argument as used in the proof of theorem \ref{thm9}, it is easy to see that $\varphi_1=\fgh,\,\varphi_2=(\hp -f,\hp -g,\hp h\hp),\,\varphi_3=(\hp -f,\hp g,\hp -h\hp),\,\varphi_4=(\hp f,\hp -g,\hp -h\hp),\,\varphi_5=\mifgh,\,\varphi_6=\fmgh,\,\varphi_7=\fgmh$ and $\varphi_8=\minusfgh$ belong to eight distinct path components of $\pdt$. If $\kappa$ is acheiral, then all the knots $\varphi_1, \varphi_2,\dots,\varphi_8$ represent it. If $\kappa$ is cheiral, then the knots $\varphi_1, \varphi_2, \varphi_3$ and $\varphi_4$ represent $\kappa$ and the knots $\varphi_5, \varphi_6, \varphi_7$ and $\varphi_8$ represent the mirror image $\kappa^*$ of $\kappa$.
\end{proof}

\begin{remark}\label{rem6}
For $d\geq3$ and $\phi=\fgh\in\pdt$, there are eight distinct path components of the space $\pdt$ each of which contains exactly one of the knot $\phi_e=(e_1 f,e_2\hp g,e_3\hp h)$ for $e=(e_1,e_2,e_3)$ in $\{-1,1\}^3$. This shows that the total number of path components of the space $\pdt$, for $d\geq3$, are in multiple of eight.
\end{remark}

\begin{remark}\label{rem7}
If $n$ distinct knot-types (up to mirror images) are represented in $\pdt$, then it has at least $8n$ distinct path components.
\end{remark}

\subsection{ The spaces $\pd$ and $\pdt$ for $d\leq 4$}\label{sec3.2}

\noindent\hskip6mm For a polynomial map $\phi\in\ad$, let us denote its first, second and third components respectively by $f_\phi,\, g_\phi$\, and\, $h_\phi$. Also, for\, $i=0,1,\ldots, d$,\, we denote the coefficients of $t^i$ in the polynomials $f_\phi,\, g_\phi$ and $h_\phi$ by $a_{i\phi},\,b_{i\phi}$\, and $c_{i\phi}$ respectively. Sometimes we use letters $\varphi,\psi, \tau,\sigma,\varsigma$ and $\omega$ to denote the elements of $\ad$. In such cases, the corresponding components and their coefficients will be denoted using corresponding subscripts. For example, for $\sigma\in\mathcal{A}_4$, its second component will be denoted by $g_\sigma$ and $b_{2\sigma}$ will denote the coefficient of $t^2$ in the polynomial $g_\sigma$.

\begin{proposition}\label{thm10}
The space $\pw$  is open in $\mathcal{A}_2$ and it has exactly four path components.
\end{proposition}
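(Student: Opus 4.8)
The plan is to realise $\mathcal{A}_2$ concretely as $\mathbb{R}^6$ and show that, under this identification, $\pw$ is simply the complement of two coordinate hyperplanes. Writing a general element of $\mathcal{A}_2$ as $t\mapsto(a_0,\ b_1t+b_0,\ c_2t^2+c_1t+c_0)$, we use $(a_0,b_0,b_1,c_0,c_1,c_2)$ as coordinates on $\mathcal{A}_2\cong\mathbb{R}^6$. The key claim, from which both assertions follow, is
\[
\pw \;=\; \{\,(a_0,b_0,b_1,c_0,c_1,c_2)\in\mathbb{R}^6 \ :\ b_1\neq 0 \ \text{and}\ c_2\neq 0\,\}.
\]
The right-hand side is the complement of the hyperplanes $\{b_1=0\}$ and $\{c_2=0\}$, hence open in $\mathcal{A}_2$, which gives the first assertion.

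To establish the displayed equality I would argue both inclusions. For $\subseteq$: if $\phi=(f,g,h)\in\pw$ then the defining condition $\deg f<\deg g<\deg h\le 2$ together with the constraints $\deg f\le 0,\ \deg g\le 1,\ \deg h\le 2$ coming from $\mathcal{A}_2$ forces the degree sequence to be $(0,1,2)$, i.e.\ $b_1\neq 0$ and $c_2\neq 0$, while $a_0$ is left unconstrained. For the reverse inclusion I would check that any $\phi=(a_0,\ b_1t+b_0,\ c_2t^2+c_1t+c_0)$ with $b_1\neq 0$ and $c_2\neq 0$ is automatically a polynomial knot: it is injective because its second component $t\mapsto b_1t+b_0$ is a bijection of $\ro$; it is an immersion because $\phi'(t)=(0,\ b_1,\ 2c_2t+c_1)$ never vanishes; and $\|\phi(t)\|^2$ is a polynomial in $t$ of degree $4$ with positive leading coefficient $c_2^2$, so $\|\phi(t)\|\to\infty$ as $|t|\to\infty$ and, its derivative being a cubic, $t\mapsto\|\phi(t)\|$ is strictly monotone outside a compact interval. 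Thus $\phi$ is a proper smooth embedding that is a long knot, with degree sequence $(0,1,2)$, so $\phi\in\pw$.

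For the number of path components I would use the continuous map $\varepsilon\colon\pw\to\{-1,1\}^2$ given by $\varepsilon(\phi)=(\mathrm{sign}\,b_1,\ \mathrm{sign}\,c_2)$. It is locally constant and onto, so $\pw$ has at least four path components. Conversely each fibre $\varepsilon^{-1}(e)$, $e\in\{-1,1\}^2$, is the product of four copies of $\ro$ (the coordinates $a_0,b_0,c_0,c_1$) with two open half-lines (the prescribed sign ranges of $b_1$ and $c_2$), hence a convex subset of $\mathbb{R}^6$, hence path-connected. So the four fibres are precisely the path components, and $\pw$ has exactly four, distinguished by the signs of the leading coefficients of $g$ and $h$.

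No deep obstacle arises here; the two points that need a little care are that the clause ``$\phi$ is a polynomial knot'' in the definition of $\pw$ imposes nothing beyond $b_1\neq 0$, $c_2\neq 0$ when $d=2$ (this is the content of the reverse inclusion above), and that the coefficient $a_0$ of the degree-$0$ component $f$ ranges over all of $\ro$ — it is not required to be nonzero — so that it contributes a connected factor $\ro$, rather than $\ro\setminus\{0\}$, to each fibre; otherwise the count would come out eight instead of four.
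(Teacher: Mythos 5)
Your proof is correct and follows essentially the same route as the paper: identify $\mathcal{A}_2$ with $\ro^6$, observe that $\pw$ is exactly the locus $b_1c_2\neq 0$, and separate the four components by the signs of $b_1$ and $c_2$. You merely spell out the details the paper leaves implicit (the embedding check for the reverse inclusion and the convexity of each sign fibre), which is fine.
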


\begin{proof}
Note that $\pw=\big\{\hs\phi\in\mathcal{A}_2\mid b_{1\phi}\hp c_{2\phi}\neq0\hs\big\}$\hp. By the identification of the space $\mathcal{A}_2$ with $\ro^6$, it is easy to see that the space $\pw$ is naturally homeomorphic to the open subset $\big\{\hs(\hp a_0,b_0,b_1,c_0,c_1,c_2 \hp) \in\ro^6\mid b_1c_2\neq0 \hs\big\}$ of the Euclidean space $\ro^6$. Therefore, the set $\pw$ is open in $\mathcal{A}_2$ and it has four path components as follows:\vo

\noindent$\mathcal{P}_{21}=\big\{\hs\phi\in\mathcal{A}_2\mid b_{1\phi}>0$ \& $c_{2\phi}>0\hs\big\}$\hp, $\mathcal{P}_{22}=\big\{\hs\phi\in\mathcal{A}_2\mid b_{1\phi}>0$ \& $c_{2\phi}<0\hs\big\}$\hp,\vo
\noindent$\mathcal{P}_{23}=\big\{\hs\phi\in\mathcal{A}_2\mid b_{1\phi}<0$ \& $c_{2\phi}>0\hs\big\}$\hp,
$\mathcal{P}_{24}=\big\{\hs\phi\in\mathcal{A}_2\mid b_{1\phi}<0$ \& $c_{2\phi}<0\hs\big\}$\hp.
\end{proof}

\begin{remark}\label{rem8}
It is easy to see that the sets $\pw$ and $\pwt$ are equal and thus proposition \ref{thm10} is also true if\, $\pw$ is replaced by $\pwt$.
\end{remark}

\begin{lemma}\label{thm11}
Let $X$ be a topological space and let $\mathfrak{F}$ be an arbitrary covering (with at least two members) for $X$ by its non-empty subsets. Let $U$ and $V$ be two distinct members of the covering\, $\mathfrak{F}$. Then for $X$ to be path connected, it is enough to satisfy the following conditions:\vo
\noindent\hskip2.25mm i) For any $u\in U$ and any $v\in V$ there is a path from $u$ to $v$.\vo
\noindent\hskip1mm ii) For any $W\in\mathfrak{F}$ and any $x\in W$ there exists an element $y\in U\cup V$ such that\vo\noindent\hskip6.6mm there is a path from $x$ to $y$.
\end{lemma}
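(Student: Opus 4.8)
The plan is to build a path in $X$ between any two points $x, x' \in X$ by a three-stage concatenation, using conditions (i) and (ii) as the glue. First I would observe that (ii) says every member $W$ of the covering is "connected by a path into $U \cup V$": for each $x \in X$, pick $W \in \mathfrak{F}$ with $x \in W$ (possible since $\mathfrak{F}$ covers $X$), and then (ii) hands us a point $y = y(x) \in U \cup V$ together with a path $\gamma_x$ in $X$ from $x$ to $y$. So it suffices to show that any two points of $U \cup V$ lie in the same path component of $X$; the general case then follows by prepending $\gamma_x$ and appending the reverse of $\gamma_{x'}$.

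Next I would dispose of the reduction to $U \cup V$ by a short case analysis on where the two endpoints $y, y' \in U \cup V$ sit. If $y \in U$ and $y' \in V$ (or vice versa), condition (i) directly supplies a path. If both lie in $U$, I would pick any auxiliary point $v \in V$ (here I use that $V$ is non-empty), apply (i) to get a path from $y$ to $v$ and a path from $v$ to $y'$, and concatenate; symmetrically if both lie in $V$, using a point of $U$. In every case we obtain a path in $X$ joining $y$ to $y'$, and concatenating with $\gamma_x$ and $\overline{\gamma_{x'}}$ (the reversal of $\gamma_{x'}$) gives a path from $x$ to $x'$. Since $x, x'$ were arbitrary, $X$ is path connected.

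There is no serious obstacle here; the statement is essentially a bookkeeping lemma about covers. The only points requiring a word of care are: (a) that concatenation and reversal of paths are again paths (standard, using that $[0,1]$ is path connected and reparametrizations are continuous), and (b) that the hypotheses are used exactly where non-emptiness of $U$ and $V$ and the covering property of $\mathfrak{F}$ are invoked — otherwise the "auxiliary point" step would fail. I would also note that the hypothesis that $\mathfrak{F}$ has at least two members, and that $U \neq V$, is what makes conditions (i) and (ii) non-vacuous; the proof itself does not otherwise need distinctness. The lemma is stated in this form because in the applications (estimating path components of $\pdt$) the cover $\mathfrak{F}$ is a family of explicitly described subsets of polynomial knots, two distinguished ones of which are easy to join, and every other one admits an easy path into their union.
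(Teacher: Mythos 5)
Your proof is correct and complete: the reduction of an arbitrary pair of points into $U\cup V$ via condition (ii), followed by the case analysis joining points of $U\cup V$ via condition (i) (using an auxiliary point of the other set when both endpoints lie in the same member), is exactly the routine concatenation argument intended here. The paper in fact states this lemma without giving a proof, treating it as elementary, so your write-up supplies precisely the argument it implicitly relies on.
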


If the cover $\mathfrak{F}$ contains only two non-empty distinct subsets $U$ and $V$, then for $X$ to be path connected it is sufficient to satisfy the first condition of the lemma.

\begin{theorem}\label{thm12}
The space $\pt$ is path connected.
\end{theorem}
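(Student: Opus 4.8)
The plan is to show that any polynomial knot $\phi=(f,g,h)\in\mathcal{P}_3$ can be joined by a path inside $\mathcal{P}_3$ to a single fixed base point, say $\omega$ given by $t\mapsto(0,\,t,\,t^3)$ (note $\omega\in\mathcal{P}_3$ since the degrees are $0<1<3$ and it is clearly an embedding). The key structural fact is that $\mathcal{P}_3$ consists of maps $(f,g,h)$ with $\deg f<\deg g<\deg h\le 3$, so $h$ has degree exactly $2$ or $3$ and $g$ has degree $1$ or $2$; in all cases $h$ is non-constant, hence $\phi$ is automatically injective as soon as we know $h$ separates points, and in fact for $\mathcal{P}_3$ being an embedding is automatic once the degree constraints hold, so the main work is purely about moving coefficients continuously without ever violating a degree inequality (or, rather, without leaving the set of embeddings — but in this low-degree regime the embedding condition is cheap).

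First I would reduce the highest-degree component: if $\deg h=3$, write $h(t)=c_3t^3+(\text{lower})$ and scale the lower-order part of $h$ down to $0$, i.e. use the path $h_s(t)=c_3t^3+(1-s)(c_2t^2+c_1t+c_0)$; throughout, $\deg h_s=3$ and the map stays an embedding because the $t^3$ term dominates. This brings us to $h(t)=c_3t^3$. If instead $\deg h=2$, I would first push it up to degree $3$ by the trick of Remark~\ref{rem1} (add $\varepsilon t^3$ and then increase $\varepsilon$), landing again in the $\deg h=3$ stratum. Next, normalize $c_3$ to $\pm1$ by scaling (the sign is a genuine invariant of the path component, but I only need to reach \emph{some} base point, and I can pick the base point's sign to match — or more carefully, I should track that $\mathcal{P}_3$ may a priori have more than one component; however $\mathcal{P}_3$ allows arbitrary affine changes so the sign of $c_3$ can be flipped using a reparametrization $t\mapsto -t$ combined with sign changes, which Lemma~\ref{thm11} handles by letting $U,V$ be the two sign strata). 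Then, with $h(t)=t^3$ fixed, I can freely homotope $f$ and $g$: set $f_s=(1-s)f$ to kill $f$ entirely (degree of $f$ only drops, which is allowed since $\mathcal{P}_3$ only requires $\deg f<\deg g$, and with $f\equiv 0$ this is $-\infty<\deg g$, fine), and similarly deform $g$ down to $g(t)=t$, keeping $\deg g\ge 1$ throughout by first scaling the leading coefficient to $1$ and then killing lower terms. At each stage injectivity is preserved because $h(t)=t^3$ already separates all parameter values.

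The cleanest packaging is via Lemma~\ref{thm11}: take $\mathfrak{F}=\{U,V\}$ where $U=\{\phi\in\mathcal{P}_3: c_{3\phi}>0\}\cup\{\deg h_\phi=2\}$ and $V$ its counterpart, or more simply cover $\mathcal{P}_3$ by the strata according to $(\deg g,\deg h)\in\{(1,2),(1,3),(2,3)\}$ together with the sign of the top coefficient of $h$, and verify condition (ii) — every element of every stratum connects into the distinguished stratum containing the base point — by the deformations above, and condition (i) — the base points of $U$ and $V$ are joined — by an explicit path such as rotating $(g,h)=(t,t^3)$ through $(t\cos\theta - t^3\sin\theta,\, t\sin\theta+t^3\cos\theta)$, which stays an embedding for all $\theta$ and connects $(t,t^3)$ to $(-t,-t^3)\sim(t,t^3)$ up to the allowed sign changes of individual components. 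The main obstacle I anticipate is bookkeeping the embedding condition when $\deg h$ temporarily equals $2$ or when lower-order coefficients are being varied: one must check the deformed map $(f_s,g_s,h_s)$ never acquires a double point. I expect this to be genuinely easy here because whenever the top component is a pure monomial $t^3$ (or $t^2$ together with a non-degenerate lower part) the curve is a graph over that coordinate and hence injective; the only delicate spot is the passage through the $\deg h=2$ stratum, which I would simply avoid by always working in the $\deg h=3$ stratum after the very first step.
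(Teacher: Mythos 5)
Your plan has genuine gaps, and they all stem from treating the embedding condition as negligible. First, it is not true that ``being an embedding is automatic once the degree constraints hold'': in the stratum with degree sequence $(0,2,3)$ the map $t\mapsto(0,\,t^2,\,t^3-t)$ satisfies the degree inequalities but has a double point, and---more damaging for your deformations---a polynomial knot must be a \emph{smooth} embedding, so the derivative may never vanish. Your normalizations break exactly this. Starting from $t\mapsto(0,\,t^2,\,t^3+t)\in\mathcal{P}_3$, your first step (scaling the lower-order part of $h$ to zero while keeping $f,g$ fixed) ends at $t\mapsto(0,\,t^2,\,t^3)$, which has vanishing derivative at $t=0$, hence is not in $\mathcal{P}_3$; ``the $t^3$ term dominates'' only controls behaviour at infinity, not critical points. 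Likewise, after you have arranged $f\equiv 0$ and $h(t)=t^3$, the straight-line deformation of $g(t)=t^2-t$ toward $t$ passes through $(0,\tfrac12 t^2,\,t^3)$ at $s=\tfrac12$, again a cusp; your check of ``injectivity because $t^3$ separates points'' never addresses the immersion condition, which is the real constraint once the top component is a pure odd monomial. Finally, the rotation $(g_\theta,h_\theta)=(t\cos\theta-t^3\sin\theta,\;t\sin\theta+t^3\cos\theta)$ you offer to verify condition (i) of Lemma \ref{thm11} is not a path in $\mathcal{P}_3$ at all: for $\theta\in(0,\pi/2)$ both components have degree $3$, violating $\deg(g)<\deg(h)\leq 3$, and at $\theta=\pi/2$ the order is inverted. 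So the one step that was supposed to join the two sign strata is missing.

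The paper avoids all of this by never normalizing toward monomials. It partitions $\mathcal{P}_3$ by degree sequence, $(0,1,2)$, $(0,1,3)$, $(0,2,3)$, $(1,2,3)$, and pushes everything into the $(1,2,3)$ stratum $\ptt$, where the first component has degree exactly $1$ and therefore the map is automatically an injective immersion---no embedding bookkeeping at all. The connecting paths are straight-line homotopies between elements of \emph{different} strata (e.g.\ from a $(0,1,2)$ knot to a $(1,2,3)$ knot), so leading coefficients come from only one endpoint and cannot cancel; in the one place where cancellation could occur, the sign $e_2$ is chosen to prevent it. Lemma \ref{thm11} is then applied with $U$ the $(0,1,2)$ stratum and $V=\ptt$, and the different sign classes get identified automatically because every sign class of $\ptt$ is joined by such a segment to every element of $U$. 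If you want to salvage your base-point approach, you would need to route all deformations through a stratum where one component is linear, rather than through $(0,g,t^3)$-type maps.
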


\begin{proof}
We consider the following sets:\vo 
\hskip10mm$U_1=\{\ho\varphi\in\mathcal{A}_3\mid\varphi$ has degree sequence $(\hp0,1,2\hp)\hs\}$\hs,\vo
\hskip10mm$U_2=\{\ho\varphi\in\mathcal{A}_3\mid\varphi$ has degree sequence $(\hp0,1,3\hp)\hs\}$\hs,\vo
\hskip10mm$U_3=\{\ho\varphi\in\mathcal{A}_3\mid\varphi$ has degree sequence $(\hp0,2,3\hp)\hs\}\cap\mathcal{P}_3$\hw and\vo
\hskip10mm$U_4=\{\ho\varphi\in\mathcal{A}_3\mid\varphi$ has degree sequence $(\hp1,2,3\hp)\hs\}=\ptt$\hs.\vw

\noindent It is easy to note that these sets are pairwise disjoint and their union is exactly equal to $\mathcal{P}_3$. To prove the theorem, we proceed as follows:\vo

$i)$ Let $\phi\in U_1$ and $\psi\in U_4$ be arbitrary elements. Let $\Phi:\zo\rightarrow\mathcal{A}_3$ be given by $\Phi(s)=\Phi_s$ for $s\in\zo$, where$$\Phi_s(t)=(1-s)\hs\phi(t)+s\hs\psi(t)$$\noindent for $t\in\ro$. It is clear that $\Phi_0=\phi$ and $\Phi_1=\psi$. For $s\in\ozo$, $\Phi_s$ has degree sequence $(\hs1,2,3\hs)$ and thus it is an element of $U_4$. This shows that $\Phi$ is a path in $\mathcal{P}_3$ from $\phi$ to $\psi$.\vo

$ii)$ Suppose $\tau=\fgh$ be an arbitrary element of $U_2\cup U_3$. Let us choose an element $e_2\in\{-1,1\}$ such that $e_2\hp b_{2\tau}\geq0$. Let $\Psi:\zo\rightarrow\mathcal{A}_3$ be given by $\Psi(s)=\Psi_s$ for $s\in\zo$, where 
$$\Psi_s(t)=\left(\hs (1-s)f(t)+s\hf t,\hs (1-s)g(t)+e_2\hp s\hf t^2,\hs h(t)\hs\right)$$ 

\noindent for $t\in\ro$. Note that $\Psi_0=\tau$ and $\Psi_1=\sigma$, where $\sigma$ is given by $\sigma(t)=\big(\hp t, e_2\hp t^2, h(t)\hp\big)$ for $t\in\ro$. It is easy to see that $\sigma\in U_4$ and $\Psi_s\in U_4$ for all $s\in\ozo$. Therefore, we have a path in $\mathcal{P}_3$ from $\tau$ to $\sigma$.\vo

The first and second parts above satisfy respectively the first and second assumptions of lemma \ref{thm11}, so by this lemma, the space $\mathcal{P}_3$ is path connected. 
\end{proof}

\begin{proposition}\label{thm17}
The space $\ptt$ has eight path components.
\end{proposition}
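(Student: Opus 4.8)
## Proof proposal

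The plan is to identify $\ptt = \mathcal{P}_{3}^{(3)}$ (polynomial knots with degree sequence exactly $(1,2,3)$) with an explicit open subset of $\mathbb{R}^{9}$ and then count its path components. Writing a general element as $t \mapsto \big(a_1 t + a_0,\, b_2 t^2 + b_1 t + b_0,\, c_3 t^3 + c_2 t^2 + c_1 t + c_0\big)$, the condition that the degree sequence be exactly $(1,2,3)$ forces $a_1 c_3 \neq 0$ and $b_2 \neq 0$; I would first check that, conversely, \emph{every} such polynomial map is automatically an embedding. This is the key structural fact: a degree-$3$ polynomial map with a genuine linear first coordinate is injective because $t \mapsto a_1 t + a_0$ already is, so $\ptt$ is precisely the open set $\{a_1 b_2 c_3 \neq 0\}$ in $\mathbb{R}^9$, hence it has no embedding subtleties and its path components are exactly the components of this open set. (This also explains, via Proposition~\ref{thm5} type reasoning, that all of $\ptt$ represents only the unknot — the crossing bound gives $c \le 1$, but with a linear coordinate there are no crossings at all.)

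Next I would observe that $\{a_1 b_2 c_3 \neq 0\}$ is the product of $\mathbb{R}^6$ (the coordinates $a_0, b_0, b_1, c_0, c_1, c_2$, which are unconstrained and convex) with $\{(a_1,b_2,c_3) \in \mathbb{R}^3 : a_1 b_2 c_3 \neq 0\}$. The latter is the complement of the three coordinate hyperplanes in $\mathbb{R}^3$, which has exactly $2^3 = 8$ connected components — one for each sign pattern $(\operatorname{sgn} a_1, \operatorname{sgn} b_2, \operatorname{sgn} c_3) \in \{-1,+1\}^3$ — and each component is a convex (in fact, open orthant) set, hence path connected. Taking the product with the connected factor $\mathbb{R}^6$ preserves both the count and path-connectedness of each piece, so $\ptt$ has exactly $8$ path components, indexed by the signs of the three leading coefficients. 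I would phrase the components explicitly as $\mathcal{P}_{3j}^{(3)} = \{\varphi \in \mathcal{A}_3 : \operatorname{sgn}(a_{1\varphi}) = \epsilon_1,\ \operatorname{sgn}(b_{2\varphi}) = \epsilon_2,\ \operatorname{sgn}(c_{3\varphi}) = \epsilon_3\}$ for the eight choices of $(\epsilon_1,\epsilon_2,\epsilon_3)$, mirroring the bookkeeping already used for $\pw$ in Proposition~\ref{thm10}.

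Two small points need care rather than real work. First, I should confirm the lower bound of $8$ matches the general principle in Remark~\ref{rem6}: since the unknot is represented in $\ptt$ and the eight sign changes $\phi \mapsto (e_1 f, e_2 g, e_3 h)$ land in eight distinct components (by the intermediate-value argument of Theorem~\ref{thm9}), there are at least $8$; the product computation above shows there are at most $8$, so exactly $8$. Second, I must make sure the leading-coefficient maps $\varphi \mapsto a_{1\varphi}$, etc., are continuous on $\mathcal{A}_3$ — immediate from the identification with $\mathbb{R}^9$ — so that the sign conditions do cut out open-and-closed pieces. The only genuine step where something could go wrong is the claim that the open set $\{a_1 b_2 c_3 \neq 0\}$ is \emph{all} of $\ptt$, i.e.\ that no further inequalities are needed for the map to be an embedding; I expect this to be the main (though still easy) obstacle, and it is settled by the injectivity of the linear first coordinate. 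With that in hand the proof is just the product-of-components count.
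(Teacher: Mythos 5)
Your proposal is correct and follows essentially the same route as the paper: the paper likewise observes that $\ptt$ is exactly the subset of $\mathcal{A}_3\cong\mathbb{R}^9$ cut out by $a_{1\phi}\neq0$, $b_{2\phi}\neq0$, $c_{3\phi}\neq0$, decomposes it into the eight sign classes $\mathcal{\tilde{P}}_{3e}$, $e\in\{-1,1\}^3$, each path connected, with no path joining distinct classes. The only point to tidy in your write-up is that being a polynomial knot requires a (proper, immersive) embedding, not merely injectivity, but this too comes for free from $a_1\neq0$, since the first coordinate has constant nonzero derivative and forces properness.
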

 
\begin{proof}  
For $e=(e_1,e_2,e_3)$ in $\{-1,1\}^3$, consider the following set:\vo
 
\hskip1mm$\mathcal{\tilde{P}}_{3e}=\big\{\hs\phi\in\mathcal{A}_3\,\mid\, e_1\hp a_{1\phi}>0,\; e_2\hp b_{2\phi}>0\hw \text{and}\hw e_3\hp c_{3\phi}>0\hs\big\}$\vo
\hskip7.9mm$\cong\big\{\hp(\hp a_0,a_1,b_0,b_1,b_2,c_0,c_1,c_2,c_3\hp) \in\ro^9\mid e_1\hp a_1>0,\,e_2\hp b_2>0$ and $e_3\hp c_3>0\hp\big\}$,\vo

\noindent where \textquotedblleft\,$\cong$\,\textquotedblright\, denotes the natural homeomorphism of the spaces under the identification of $\mathcal{A}_3$ with $\ro^9$. It is easy to see that $\ptt=\bigcup_e\mathcal{\tilde{P}}_{3e}$. For any $e\in\{-1,1\}^3$, the set $\mathcal{\tilde{P}}_{3e}$ is path connected. Also, for $e\neq e'$, there is no path in $\ptt$ from an element of the set $\mathcal{\tilde{P}}_{3e}$ to an element of the set $\mathcal{\tilde{P}}_{3e'}$\hp. Thus, the sets $\mathcal{\tilde{P}}_{3e}$, for $e\in\{-1,1\}^3$, are nothing but the path components of the space $\ptt$. 
\end{proof}

\begin{proposition}\label{thm13}
A polynomial map $\phi\in\mathcal{A}_4$ given by $t\mapsto(\hs t^2+a\hf t\hs,\hs t^3+b\hf t,\hs t^4+c\hf t\hs)$ is an embedding if and only if\: $3\hf a^2+4\hf b>0$\, or\, $a^3+2\hf a\hf b+c\neq0$\hp.
\end{proposition}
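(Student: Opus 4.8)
The plan is to separate the two ways in which the polynomial map $\phi:t\mapsto(t^2+at,\,t^3+bt,\,t^4+ct)$ can fail to be an embedding of $\ro$ into $\rt$ --- non-injectivity (a genuine self-intersection) and failure of the immersion condition (a point where $\phi'$ vanishes) --- and to show that, after passing to the parameters $a,b,c$, both degeneracies live on the single hypersurface $a^3+2ab+c=0$, distinguished only by the sign of $3a^2+4b$. Since the third component of $\phi$ has degree $4$, the map is proper with $\|\phi(t)\|\to\infty$, so $\phi$ is a long knot precisely when it is a smooth embedding; and a proper injective immersion $\ro\to\rt$ is automatically a (closed) embedding. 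Hence it suffices to decide when $\phi$ is injective \emph{and} $\phi'$ is nowhere zero.

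First I would treat injectivity. If $\phi(s)=\phi(t)$ with $s\neq t$, dividing each coordinate equation by $s-t$ yields
$$s+t+a=0,\qquad s^2+st+t^2+b=0,\qquad s^3+s^2t+st^2+t^3+c=0.$$
Setting $p=s+t$ and $q=st$, the first two equations give $p=-a$ and $q=p^2+b=a^2+b$, and the third reads $p(p^2-2q)+c=0$, which on substitution becomes exactly $a^3+2ab+c=0$. Conversely, whenever $a^3+2ab+c=0$ the numbers $s,t$ are the roots of $X^2+aX+(a^2+b)$, so a genuine self-intersection (real, with $s\neq t$) exists iff this quadratic has two distinct real roots, i.e. iff its discriminant $a^2-4(a^2+b)=-(3a^2+4b)$ is positive. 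Therefore $\phi$ fails to be injective exactly when $a^3+2ab+c=0$ and $3a^2+4b<0$.

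Next I would handle the immersion condition. From $\phi'(t)=(2t+a,\,3t^2+b,\,4t^3+c)$, a zero of $\phi'$ forces $t=-a/2$, and then the other two entries vanish iff $3a^2+4b=0$ and $c=a^3/2$. A short check shows that under the constraint $3a^2+4b=0$ the conditions $c=a^3/2$ and $a^3+2ab+c=0$ coincide; so $\phi$ fails to be an immersion exactly when $3a^2+4b=0$ and $a^3+2ab+c=0$ --- precisely the limiting case $s=t$ of the previous computation. Combining the two cases, $\phi$ is \emph{not} an embedding iff $a^3+2ab+c=0$ and $3a^2+4b\le 0$; negating this gives the stated criterion $3a^2+4b>0$ or $a^3+2ab+c\neq0$.

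The only delicate point --- and what I regard as the real content --- is ensuring that nothing is lost in the passage between the two cases: a tangential self-intersection, where two branches come together as $s\to t$, must be accounted for by the immersion analysis and not silently dropped from the $s\neq t$ count, and one must be sure that for a polynomial curve in $\rt$ there is no obstruction to being a smooth embedding beyond non-injectivity and non-immersion. Checking that the locus $3a^2+4b=0$ on the surface $a^3+2ab+c=0$ is exactly the set where $\phi'(-a/2)=0$ is what makes the two pieces fit together without overlap or gap.
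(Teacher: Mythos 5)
Your proof is correct and follows essentially the same route as the paper: both reduce the embedding condition to the non-existence of a common real solution of the divided-difference equations $s+t+a=0$, $s^2+st+t^2+b=0$, $s^3+s^2t+st^2+t^3+c=0$, leading to the quadratic $t^2+at+(a^2+b)$ with discriminant $-(3a^2+4b)$ and to the expression $a^3+2ab+c$ from the third equation. The differences are only organizational --- you split the failure into non-injectivity ($s\neq t$) and non-immersion ($s=t$, i.e.\ $\phi'=0$) and eliminate via the symmetric functions $s+t$ and $st$, whereas the paper treats both at once and runs a three-case analysis on the sign of $3a^2+4b$; your added properness argument justifying that an injective immersion is here an embedding is a point the paper asserts without comment.
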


\begin{proof}
Note that, a map $\phi\in\mathcal{A}_4$ given by $t\mapsto(\hs t^2+a\hf t\hs,\hs t^3+b\hf t,\hs t^4+c\hf t\hs)$ is an embedding $\Leftrightarrow$  $\phi(s)\neq\phi(t)$ for all $s,t\in\ro$ with $s\neq t$ and $\phi'(u)\neq0$ for all $u\in\ro$ $\Leftrightarrow$ the equations
\begin{eqnarray}
(s+t)+a&=& 0\hf,\label{eq1}\\
(s^2+s\hf t+t^2)+b&=& 0 \label{eq2}\hw\mbox{and}\\
(s^3+s^2\hf t+s\hf t^2+t^3)+c&=& 0\label{eq3}
\end{eqnarray}
\noindent does not have a common real solution. Using \ref{eq1} in \ref{eq2}, we get
\begin{eqnarray}
t^2+a\hf t+(a^2+b)&=& 0\hp.\label{eq4}
\end{eqnarray}

\noindent This quadratic equation has solutions $t=t_1$ and $t=t_2$, where
\begin{eqnarray*}
t_1=\dfrac{-a-\sqrt{-3\hf a^2-4\hf b}}{2}\hskip5mm \mbox{and} \hskip5mmt_2=\dfrac{-a+\sqrt{-3\hf a^2-4\hf b}}{2}\;.
\end{eqnarray*}
In order to prove the proposition, it is sufficient to check the following three statements (in fact, they are easy to check):\vo
1) $3\hf a^2+4\hf b>0$ $\Leftrightarrow$ the equation \ref{eq4} has no real solution $\Leftrightarrow$ the equations \ref{eq1} and \ref{eq2} does not have a common real solution.\vo 
2) $3\hf a^2+4\hf b=0$ and $a^3+2\hf a\hf b+c\neq0$ $\Leftrightarrow$  $3\hf a^2+4\hf b=0$ and $a^3-2\hf c\neq0$ $\Leftrightarrow$ $t=-a/2$ is the only solution of the equation \ref{eq4} and $a^3-2\hf c\neq0$ $\Leftrightarrow$ $(s,t)=(-a/2,-a/2)$ is the only common real solution of the equations \ref{eq1} and \ref{eq2}, but it is not a solution of the equation \ref{eq3}.\vo

3) $3\hf a^2+4\hf b<0$ and $a^3+2\hf a\hf b+c\neq0$ $\Leftrightarrow$ $t=t_1$ and $t=t_2$ are two distinct solutions of the equation \ref{eq4} and $a^3+2\hf a\hf b+c\neq0$ $\Leftrightarrow$ $(s,t)=(t_1,t_2)$ and $(s,t)=(t_2,t_1)$ are the only common real solutions of the equations \ref{eq1} and \ref{eq2}, but no one is a solution of the equation \ref{eq3}.
\end{proof}

\begin{corollary}\label{cor3}
For $e_1, e_2, e_3\in\{\hs-1, 1\hs\}$ and $a,b,c\in\ro$, a polynomial map $\tau\in\mathcal{A}_4$ given by $t\mapsto(\hs e_1\hf t^2+a\hf t\hs,\hs e_2\hf t^3+b\hf t,\hs e_3\hf t^4+c\hf t\hs)$ is an embedding if and only if\: $3 a^2+4 e_2 b>0$\: or\: $e_1 a^3+2 e_1 e_2\hp a\hp b+ e_3 c\neq0$\hp.
\end{corollary}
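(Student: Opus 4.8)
The plan is to deduce this directly from Proposition \ref{thm13} by composing $\tau$ with a sign-flipping linear automorphism of the target. Consider the linear map $L:\rt\to\rt$ given by $(x,y,z)\mapsto(\hs e_1 x,\hs e_2 y,\hs e_3 z\hs)$. Since each $e_i\in\{-1,1\}$, the determinant of $L$ is $e_1 e_2 e_3=\pm1\neq0$, so $L$ is a linear isomorphism and hence a diffeomorphism of $\rt$. Composing an embedding with a diffeomorphism of the target again yields an embedding (injectivity is preserved, and the chain rule preserves the immersion condition $\phi'(u)\neq0$), so $\tau$ is an embedding if and only if $L\circ\tau$ is an embedding. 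This is the only conceptual point, and it is elementary.

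Next I would simply compute $L\circ\tau$. Using $e_i^2=1$ one gets
\[
(L\circ\tau)(t)=\big(\hs t^2 + e_1 a\hf t,\hs t^3 + e_2 b\hf t,\hs t^4 + e_3 c\hf t\hs\big),
\]
which is precisely the family of maps treated in Proposition \ref{thm13}, with the parameters $a,b,c$ there replaced by $e_1 a$, $e_2 b$, $e_3 c$ respectively. Applying that proposition, $L\circ\tau$ is an embedding if and only if $3(e_1 a)^2 + 4(e_2 b)>0$ or $(e_1 a)^3 + 2(e_1 a)(e_2 b) + e_3 c\neq0$.

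Finally I would simplify the two conditions using $e_1^2=1$ and $e_1^3=e_1$: the first becomes $3a^2 + 4 e_2 b>0$ and the second becomes $e_1 a^3 + 2 e_1 e_2\hp a\hp b + e_3 c\neq0$. Combined with the equivalence from the first paragraph, this is exactly the stated criterion, finishing the proof. There is no real obstacle here — all the substance is contained in Proposition \ref{thm13}, and what remains is the observation that embeddings are stable under post-composition with linear automorphisms together with routine sign bookkeeping.
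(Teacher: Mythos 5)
Your proof is correct and is essentially the paper's own argument: the paper likewise multiplies each component by $e_i$ (i.e.\ post-composes with the diagonal automorphism $(x,y,z)\mapsto(e_1x,e_2y,e_3z)$), rewrites the map as $t\mapsto(t^2+e_1a\,t,\,t^3+e_2b\,t,\,t^4+e_3c\,t)$, and applies Proposition \ref{thm13} with the same sign bookkeeping. Your version merely spells out the diffeomorphism justification more explicitly, which is fine.
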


\begin{proof}
It is easy to note that $\tau$ is an embedding $\Leftrightarrow$ $\phi\in\mathcal{A}_4$ given by 
$$t\mapsto\left(\hs e_1(e_1\hf t^2+a\hf t)\hs,\hs e_2(e_2\hf t^3+b\hf t),\hs e_3(e_3\hf t^4+c\hf t)\hs\right)$$ \noindent is an embedding. The map $\phi$ can be written as $t\mapsto\left(\hs t^2+e_1\hp a\hf t,\hf t^3+e_2\hp b\hf t,\hf t^4+e_3\hp c\hf t\hs\right)$. Thus, using proposition \ref{thm13}, one can say that $\phi$ is an embedding $\Leftrightarrow$ $3 a^2+4 e_2 b>0$ or $e_1 a^3+2 e_1 e_2\hp a\hp b+ e_3 c\neq0$.
\end{proof}

\begin{proposition}\label{thm14} 
For $e_1, e_2, e_3\in\{\hs-1, 1\hs\}$ and $a,b,c\in\ro$, a polynomial knot $\varphi\in\pfrt$ given by $t\mapsto(\hs e_1\hf t^2+a\hf t,\hs e_2\hf t^3+b\hf t,\hs e_3\hf t^4+c\hf t\hs)$ is path equivalent (in $\pfr$) to at least one of the polynomial knot $t\mapsto(\hs0,\hs e_2\hf t,\hs e_3\hf t^2\hs)$\, or\, $t\mapsto(\hs0,\hs -e_2\hf t,\hs -2\hf e_3\hf t^2\hs)$.
\end{proposition}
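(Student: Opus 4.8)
The plan is to join $\varphi$ to one of the two listed knots by an explicit concatenation of paths inside $\pfr$, organized in three stages: first normalize the lower-order coefficients $a,b,c$ while keeping the degree sequence $(2,3,4)$, so that $\varphi$ automatically stays an embedding by Corollary \ref{cor3}; then lower the degree of the first component until it vanishes; and finally straighten the remaining two components down to the target parabola. Two cheap sufficient conditions for a polynomial map in $\mathcal{A}_4$ to be an embedding (hence a polynomial knot) will be used repeatedly: (a) if some component is injective with nowhere-vanishing derivative then the whole map is an embedding; in particular (b) any map whose second component has the form $e_2(\lambda t^{3}+t)$ with $\lambda\ge 0$ is an embedding, since $e_2(3\lambda t^{2}+1)$ never vanishes. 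Since $\varphi\in\pfrt$ it is an embedding, so by Corollary \ref{cor3} either $3a^{2}+4e_2 b>0$ or $e_1 a^{3}+2e_1 e_2 ab+e_3 c\neq 0$, and I split the argument according to which clause holds.

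\emph{Case $3a^{2}+4e_2 b>0$.} The set $\{(a,b)\in\ro^{2}\mid 3a^{2}+4e_2 b>0\}$ is path-connected, and over it the value of $c$ imposes no further constraint for $\varphi$ to be an embedding. Staying inside $\pfrt$ the whole time, I first move $c$ to $0$ (the inequality $3a^{2}+4e_2 b>0$ is untouched), then slide $(a,b)$ to $(0,e_2)$ along a path inside $\{3a^{2}+4e_2 b>0\}$ (push $e_2 b$ far up, slide $a$ to $0$, bring $e_2 b$ down to $1$), reaching $\sigma(t)=(e_1 t^{2},\,e_2(t^{3}+t),\,e_3 t^{4})$. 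Now (b) applies, so $s\mapsto((1-s)e_1 t^{2},\,e_2(t^{3}+t),\,e_3 t^{4})$ is a path in $\pfr$ ending at $(0,\,e_2(t^{3}+t),\,e_3 t^{4})$; next $s\mapsto(0,\,e_2((1-s)t^{3}+t),\,e_3 t^{4})$ is a path in $\pfr$ (again by (b)) ending at $(0,e_2 t,e_3 t^{4})$; finally $s\mapsto(0,\,e_2 t,\,e_3((1-s)t^{4}+s t^{2}))$ is a path in $\pfr$ (the second component $e_2 t$ is injective with nonzero derivative, so (a) applies whatever the third component is) ending at $(0,e_2 t,e_3 t^{2})$. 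Concatenation produces a path in $\pfr$ from $\varphi$ to the first listed knot.

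\emph{Case $3a^{2}+4e_2 b\le 0$, so $e_1 a^{3}+2e_1 e_2 ab+e_3 c\neq 0$.} Here the second component of $\varphi$ need not be monotone, the cheap criteria (a),(b) are unavailable for the intermediate maps, and the embedding property must be checked directly from the simultaneous equations for double points; I expect this case to be the main obstacle. The plan: the relevant parameter region is the complement, inside $\{3a^{2}+4e_2 b\le 0\}\times\ro$, of the graph surface $c=-e_1 e_3\,a^{3}-2e_1 e_2 e_3\,ab$; normalize $(a,b,c)$, while remaining in $\pfrt$, to a standard triple of the form $(0,-e_2,c_0)$ with $c_0$ chosen on the appropriate side of that surface, and then run a further sequence of paths — each verified by hand to steer clear of the non-embedding locus — ending at $(0,-e_2 t,-2e_3 t^{2})$; the particular coefficients $-e_2$ and $-2e_3$ are precisely those for which these verifications succeed, and it is the need to thread the normalizing path and then the straightening path through the complement of that cubic surface that forces the second normal form. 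Since both $(0,e_2 t,e_3 t^{2})$ and $(0,-e_2 t,-2e_3 t^{2})$ do lie in $\pfr$, concatenating the paths of whichever case applies proves the proposition.
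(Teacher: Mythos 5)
Your first case is fine: when $3a^2+4e_2b>0$ your concatenation (kill $c$, slide $(a,b)$ to $(0,e_2)$ inside the region $3a^2+4e_2b>0$, then use monotonicity of the second component to collapse the first and third) stays in $\mathcal{P}_4$ and reaches $(0,e_2t,e_3t^2)$; this is a more pedestrian route than the paper, which handles the same case by the single interpolation $\Phi_s(t)=\big((1-s)(e_1t^2+at),\,(1-s)(e_2t^3+bt)+e_2st,\,(1-s)(e_3t^4+ct)+e_3st^2\big)$, rescaling by $(1-s)^{-1}$ and shearing off the $t^2$ term of the third component so that Corollary \ref{cor3} applies at every time. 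Both work for that case.

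The genuine gap is your second case. There you only state a plan ("run a further sequence of paths --- each verified by hand to steer clear of the non-embedding locus") without producing a single explicit path or verifying that any intermediate map is an embedding; even the preliminary claim that $(a,b,c)$ can be normalized to $(0,-e_2,c_0)$ while remaining in $\tilde{\mathcal{P}}_4$ needs an argument about the connectivity of the complement of the cubic graph over the parabolic region $3a^2+4e_2b\le 0$, which you do not give. So half of the proposition is unproven, and it is precisely the half that dictates the second normal form. The missing idea, which is how the paper does it, is that no threading around the surface is needed: take the straight-line interpolation toward the target, $\varUpsilon_s(t)=\big((1-s)(e_1t^2+at),\,(1-s)(e_2t^3+bt)-e_2st,\,(1-s)(e_3t^4+ct)-2e_3st^2\big)$. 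For $0<s<1$, multiply by $(1-s)^{-1}$ and subtract $\tfrac{2e_1e_3s}{1-s}$ times the first component from the third to remove the $t^2$ term; the result is again of the normal form of Corollary \ref{cor3} with parameters $\big(a,\;b-\tfrac{e_2s}{1-s},\;c+\tfrac{2e_1e_3as}{1-s}\big)$, and the quantity $e_1a^3+2e_1e_2ab+e_3c$ is exactly unchanged because the contributions $2e_1e_2a\big({-\tfrac{e_2s}{1-s}}\big)$ and $e_3\cdot\tfrac{2e_1e_3as}{1-s}$ cancel --- this cancellation is the real reason the coefficients $-e_2$ and $-2e_3$ appear in the second target. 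Hence Corollary \ref{cor3} certifies that every $\varUpsilon_s$ is an embedding, and $\varUpsilon$ is a path in $\mathcal{P}_4$ from $\varphi$ to $(0,-e_2t,-2e_3t^2)$. Without this (or some equally explicit construction), your proposal does not establish the proposition.
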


\begin{proof}
By corollary \ref{cor3}, for $\varphi\in\pfr$ given by $t\mapsto(\hs e_1\hf t^2+a\hf t,\hs e_2\hf t^3+b\hf t,\hs e_3\hf t^4+c\hf t\hs)$, we have\, $3 a^2+4 e_2 b>0$\, or\, $e_1\hf a^3+2\hf e_1\hf e_2\hf a\hf b+e_3\hf c\neq0$. We now consider the following two cases:\vo

$i)$ If $3 a^2+4 e_2 b>0$\hp: Let $\Phi:\zo\rightarrow\mathcal{A}_4$ be a map which is given by $\Phi(s)=\Phi_s$ for $s\in\zo$, where$$\Phi_s(t)=\left(\hs (1-s)(e_1\hf t^2+a\hf t),\hs(1-s)(e_2\hf t^3+b\hf t)+e_2\hf s\hs t,\hs (1-s)(e_3\hf t^4+c\hf t)+e_3\hs s\hs t^2\hs\right)$$\noindent for $t\in\ro$. For $s\in\ozo$, the polynomial map $\Phi_s$ is an embedding $\Leftrightarrow$ the polynomial map $\frac{1}{1-s}\Phi_s\in\mathcal{A}_4$ given by $$t\mapsto\bigg(\hs e_1\hf t^2+a\hf t,\hs e_2\hf t^3+\Big(b+\dfrac{e_2\hf s}{1-s}\Big)t,\hs e_3\hf t^4+\dfrac{e_3\hf s}{1-s}\hf t^2+c\hf t\hs\bigg)$$ is an embedding $\Leftrightarrow$ the polynomial map $\Psi_s\in\mathcal{A}_4$ given by $$t\mapsto\bigg(\hs e_1\hf t^2+a\hf t,\hs e_2\hf t^3+\Big(b+\dfrac{e_2\hf s}{1-s}\Big)t,\hs e_3\hf t^4+\Big( c-\dfrac{e_1\hf e_3\hf a\hf s}{1-s}\Big) t\hs\bigg)$$ \noindent is an embedding. For $s\in\ozo$, since $3\hf a^2+4\hf e_2\big(b+\frac{e_2\hf s}{1-s}\big)=3\hf a^2+4\hf e_2\hf b+\frac{4\hf s}{1-s}>0$, so by corollary \ref{cor3}, the map $\Psi_s$ is an embedding and hence so is the map $\Phi_s$. Note that $\Phi_0=\varphi$ and $\Phi_1=\tau$, where $\tau$ is given by $t\mapsto(\hs0,\hs e_2\hf t,\hs e_3\hf t^2\hs)$. Thus, there is path in $\pfr$ from $\varphi$ to $\tau$.\vo

$ii)$ If $e_1\hf a^3+2\hf e_1\hf e_2\hf a\hf b+e_3\hf c\neq0$\hp: Let $\varUpsilon:\zo\rightarrow\mathcal{A}_4$ be a map which given by $\varUpsilon(s)=\varUpsilon_s$ for $s\in\zo$, where
$$\varUpsilon_s(t)=\left((1-s)(e_1\hf t^2+a\hf t),\hf(1-s)(e_2\hf t^3+b\hf t)-e_2\hf s\hf t,\hf (1-s)(e_3\hf t^4+c\hf t)-2\hf e_3\hf s\hf t^2\right)$$ 
\noindent for $t\in\ro$. For $s\in\ozo$, the polynomial map $\varUpsilon_s$ is an embedding $\Leftrightarrow$ the polynomial map $\frac{1}{1-s}\varUpsilon_s\in\mathcal{A}_4$ given by
$$t\mapsto\bigg(\hs e_1\hf t^2+a\hf t,\hs e_2\hf t^3+\Big(b-\dfrac{e_2\hf s}{1-s}\Big)t,\hs e_3\hf t^4-\dfrac{2\hf e_3\hf s}{1-s}\hf t^2+c\hf t\hs\bigg)$$ is an embedding $\Leftrightarrow$ the polynomial map $\varGamma_s\in\mathcal{A}_4$ given by $$t\mapsto\bigg(\hs e_1\hf t^2+a\hf t,\hs e_2\hf t^3+\Big(b-\dfrac{e_2\hf s}{1-s}\Big)t,\hs e_3\hf t^4+\Big( c+\dfrac{2\hf e_1\hf e_3\hf a\hf s}{1-s}\Big) t\hs\bigg)$$ \noindent is an embedding. For $s\in\ozo$, since $$e_1\hf a^3+2\hf e_1\hf e_2\hf a\Big(b-\dfrac{e_2\hf s}{1-s}\Big)+e_3\Big( c+\dfrac{2\hf e_1\hf e_3\hf a\hf s}{1-s}\Big) =e_1\hf a^3+2\hf e_1\hf e_2\hf a\hf b+e_3\hf c\neq0\hp,$$ so by corollary \ref{cor3}, the map $\varGamma_s$ is an embedding and hence so is the map $\varUpsilon_s$. Note that $\varUpsilon_0=\varphi$ and $\varUpsilon_1=\sigma$, where $\sigma$ is given by $t\mapsto(\hs0,\hs -e_2\hf t,\hs -2\hf e_3\hf t^2\hs)$. This shows that the map $\varUpsilon$ is a path in $\pfr$ from $\varphi$ to $\sigma$.
\end{proof}

\begin{proposition}\label{thm15}
For any $\phi\in\pfrt$, there exist $e_1, e_2, e_3\in\{\hs-1, 1\hs\}$ and $a,b,c\in\ro$  such that  a polynomial map $\psi$ given by $t\mapsto(\hs e_1\hf t^2+a\hf t\hs,\hs e_2\hf t^3+b\hf t,\hs e_3\hf t^4+c\hf t\hs)$ is an embedding which is path equivalent (in $\pfrt$) to the polynomial knot $\phi$.
\end{proposition}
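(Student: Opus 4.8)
The plan is to bring an arbitrary $\phi\in\pfrt$ into the stated normal form by a finite concatenation of elementary deformations, each obtained by post-composing $\phi$ with a one-parameter family of orientation-preserving affine automorphisms of $\rt$ that starts at the identity. Write $\phi(t)=\big(a_2t^2+a_1t+a_0,\ b_3t^3+b_2t^2+b_1t+b_0,\ c_4t^4+c_3t^3+c_2t^2+c_1t+c_0\big)$ with $a_2b_3c_4\neq0$. The guiding observation is that if $L_s$ ($s\in\zo$) is a continuous family of affine automorphisms of $\rt$ with $L_0=\mathrm{id}$, then $s\mapsto L_s\circ\phi$ is a path of embeddings, and it stays inside $\pfrt$ provided no $L_s$ drops the degree of the first, second or third component below $2,3,4$ respectively; all automorphisms used below — translations, ``row operation'' shears, and positive diagonal rescalings — have this property, so the concatenated path lies in $\pfrt$.

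First I would translate the target by the family $\phi_s=\phi-s\,\phi(0)$, which alters only constant terms and hence is a path in $\pfrt$ ending at a knot with $a_0=b_0=c_0=0$. Next I would clear the unwanted lower-order monomials one degree at a time by shears that subtract a multiple of a lower-degree component from a higher-degree one: the family $(x,y,z)\mapsto\big(x,\,y-s(b_2/a_2)x,\,z\big)$ removes the $t^2$-term of the second component (its degree stays $3$ since $\deg f<\deg g$); then, in the third component, the families $(x,y,z)\mapsto\big(x,y,z-s(c_3/b_3)y\big)$ and $(x,y,z)\mapsto\big(x,y,z-s\nu x\big)$, with $\nu$ the $t^2$-coefficient of the third component at that stage divided by $a_2$, remove its $t^3$- and then its $t^2$-term (its degree stays $4$ since $\deg f,\deg g<\deg h$); since the first and second components no longer carry constant terms, these shears introduce none. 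At this point $\phi$ has been deformed within $\pfrt$ to $t\mapsto\big(a_2t^2+a_1't,\ b_3t^3+b_1't,\ c_4t^4+c_1't\big)$. Finally the straight-line family of diagonal maps $\mathrm{diag}\big(1-s+s/|a_2|,\ 1-s+s/|b_3|,\ 1-s+s/|c_4|\big)$, invertible for every $s\in\zo$ because all three entries remain positive, rescales the leading coefficients to $e_1=\mathrm{sgn}(a_2)$, $e_2=\mathrm{sgn}(b_3)$, $e_3=\mathrm{sgn}(c_4)$, producing $\psi(t)=\big(e_1t^2+at,\ e_2t^3+bt,\ e_3t^4+ct\big)$ for suitable $a,b,c\in\ro$. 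Being a post-composition of the embedding $\phi$ with an affine automorphism, $\psi$ is itself an embedding, so $\psi\in\pfrt$, and the concatenation of the above paths is a path in $\pfrt$ from $\phi$ to $\psi$.

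I do not expect a genuine obstacle. The only point needing care — which I would verify step by step — is that every intermediate map keeps the exact degree sequence $(2,3,4)$, so that the path never leaves $\pfrt$; this dictates the order of the operations (clear constant terms first, and clear each off-degree monomial of a component using only strictly-lower-degree components), but each move is a standard linear-algebra step joined to the identity through automorphisms of the same type, so the argument closes directly. As a consistency check one may reconfirm, via Corollary \ref{cor3}, that the resulting $\psi$ is an embedding, though this is already implied by $\psi$ being $\phi$ composed with an automorphism of $\rt$.
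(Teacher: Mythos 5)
Your proposal is correct and follows essentially the same route as the paper's proof: first kill the constant terms by a straight-line homotopy, then shear away the $t^2$-term of the second component and the $t^3$- and $t^2$-terms of the third using lower components, and finally rescale the leading coefficients to $\pm1$ by positive diagonal maps, each stage visibly preserving the degree sequence $(2,3,4)$. The only cosmetic difference is that the paper packages the three shears into a single homotopy $\Psi_s$, whereas you concatenate them and justify embeddedness explicitly via post-composition with affine automorphisms.
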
   

\begin{proof} We prove this proposition in the following steps:\vo
$i)$ Let $\phi\in\pfrt$ be a polynomial knot and let it be given by $$t\mapsto\big(\hs a_2 t^2+a_1t+a_0,\hs b_3t^3+b_2t^2+b_1t+b_0,\hs c_4t^4+c_3t^3+c_2t^2+c_1t+c_0\hs\big).$$ \noindent We take a map  $\Phi:\zo\rightarrow\mathcal{A}_4$ which is given by $\Phi(s)=\Phi_s$ for $s\in\zo$, where\vw
\hskip10mm$\Phi_s(t)=\big(\hs a_2 t^2+a_1t+(1-s)\hf a_0,\hs b_3t^3+b_2t^2+b_1t+(1-s)\hf b_0,\hs c_4t^4+c_3t^3+$\vo\hskip27.4mm $c_2t^2+c_1t+(1-s)\hf c_0\hs\big)$\vw
\noindent for $t\in\ro$. It is easy to check that $\Phi_s\in\pfrt$ for all $s\in\zo$. Let $\tau=\fgh$, where $f(t)=a_2 t^2+a_1t,\hs g(t)=b_3t^3+b_2t^2+b_1t$ and $h(t)=c_4t^4+c_3t^3+c_2t^2+c_1t$ for $t\in\ro$. Clearly $\Phi_0=\phi$ and $\Phi_1=\tau$. This gives a path in $\pfrt$ from $\phi$ to $\tau$.\vo

$ii)$ Note that $a_2, b_3$ and $c_4$ all are nonzero. Let $\Psi:\zo\rightarrow\mathcal{A}_4$ be a map which is given by $\Psi(s)=\Psi_s$ for $s\in\zo$, where $$\Psi_s(t)=\Big( \hs f(t),\hs g(t)-\frac{b_2}{a_2}s\hf f(t),\hs h(t)+\frac{b_2c_3-b_3c_2}{a_2b_3}s\hf f(t)-\frac{c_3}{b_3}s\hf g(t)\hs\Big)$$ \noindent for $t\in\ro$. Note that $\Psi_s\in\pfrt$ for all $s\in\zo$. Let $\sigma=(\hp f, g_1, h_1\hp)$, where
\begin{align*}
g_1(t)&=b_3t^3+b_{11}t=b_3t^3+\Big(b_1-\frac{a_1\hf b_2}{a_2}\Big)\hp t\hw \mbox{and}\\
h_1(t)&=c_4t^4+c_{11}t=c_4t^4+\Big(c_1+\frac{a_1b_2c_3-a_1b_3c_2}{a_2b_3}-\frac{b_1c_3}{b_3}\Big)\hp t
\end{align*}
\noindent for $t\in\ro$. It is easy to check that $\Psi_0=\tau$ and $\Psi_1=\sigma$. So we have a path in $\pfrt$ from $\tau$ to $\sigma$.\vo

$iii)$ Let $\varUpsilon:\zo\rightarrow\mathcal{A}_4$ be given by $\varUpsilon(s)=\varUpsilon_s$ for $s\in\zo$, where
$$\varUpsilon_s(t)=\bigg(\hs\Big( 1-s+\dfrac{s}{|a_2|}\Big)f(t),\hs \Big( 1-s+\dfrac{s}{|b_3|}\Big) g_1(t),\hs\Big( 1-s+\dfrac{s}{|c_4|}\Big) h_1(t)\hs\bigg)$$ 
\noindent for $t\in\ro$. For $s\in\zo$, since $1-s+\frac{s}{|a_2|}>0,\ho 1-s+\frac{s}{|b_3|}>0$ and $1-s+\frac{s}{|c_4|}>0$, so $\varUpsilon_s$ is a polynomial knot in $\pfrt$. Let $\psi=(\hp f_2, g_2, h_2\hp)$, where
\begin{eqnarray*}
f_2(t)&=& e_1\hf t^2+a\hf t=\dfrac{a_2t^2+a_1t}{|a_2|}\,,\\
g_2(t)&=& e_2\hf t^2+b\hf t=\dfrac{b_3t^3+b_{11}t}{|b_3|}\hw \mbox{and}\\
h_2(t)&=& e_3\hf t^2+c\hf t=\dfrac{c_4t^4+c_{11}t}{|c_4|}
\end{eqnarray*}
\noindent for $t\in\ro$. It is easy to note that $\varUpsilon_0=\sigma$ and $\varUpsilon_1=\psi$. This shows that the map $\varUpsilon$ is a path in $\pfrt$ from $\sigma$ to $\psi$.
\end{proof}

The following corollary follows trivially from propositions \ref{thm14} and \ref{thm15}.

\begin{corollary}\label{cor4}
Any polynomial knot $\phi\in\pfrt$ is path equivalent (in $\pfr$) to a polynomial knot $\psi\in\pfr$ having degree sequence $(\hp0,1,2\hp)$.
\end{corollary}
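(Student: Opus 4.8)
The plan is to chain Propositions~\ref{thm15} and~\ref{thm14}, using only transitivity of the relation ``lying in the same path component of $\pfr$'' together with the elementary observation that a path contained in the subspace $\pfrt$ is in particular a path in the larger space $\pfr$ (this is immediate from the definition of the subspace topology).

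First I would take an arbitrary $\phi\in\pfrt$ and invoke Proposition~\ref{thm15} to produce signs $e_1,e_2,e_3\in\{-1,1\}$, reals $a,b,c\in\ro$, and an embedding $\psi_0$ given by $t\mapsto(e_1 t^2+a\hf t,\,e_2 t^3+b\hf t,\,e_3 t^4+c\hf t)$ which is path equivalent to $\phi$ \emph{within} $\pfrt$. Since $\pfrt\subset\pfr$, the connecting path also lies in $\pfr$, so $\phi$ and $\psi_0$ are path equivalent in $\pfr$ as well.

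Next, observe that $\psi_0$ has component degrees exactly $2,3,4$, hence $\psi_0\in\pfrt$, so Proposition~\ref{thm14} applies to it verbatim (with this same triple $(e_1,e_2,e_3)$): $\psi_0$ is path equivalent in $\pfr$ either to $t\mapsto(0,\,e_2 t,\,e_3 t^2)$ or to $t\mapsto(0,\,-e_2 t,\,-2\hf e_3 t^2)$. In both cases the resulting polynomial knot $\psi$ has degree sequence $(0,1,2)$. Combining the two path equivalences by transitivity in $\pfr$ shows that $\phi$ is path equivalent in $\pfr$ to such a $\psi$, which is exactly the assertion.

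There is essentially no obstacle here: the two cited propositions already carry all the analytic content — the explicit one-parameter families and the embedding criterion of Corollary~\ref{cor3}. The only points requiring a word of care are (i) verifying that the intermediate knot $\psi_0$ genuinely lands in $\pfrt$, so that Proposition~\ref{thm14} is applicable to it with the appropriate sign triple, and (ii) noting that path equivalence in the subspace $\pfrt$ implies path equivalence in $\pfr$. Both are routine, so the corollary indeed follows trivially.
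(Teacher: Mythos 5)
Your argument is exactly the paper's: the corollary is stated there as following directly from Propositions \ref{thm14} and \ref{thm15}, chained by transitivity together with the inclusion $\pfrt\subset\pfr$, which is precisely what you do. Correct, and essentially the same approach.
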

 
\begin{theorem}\label{thm16}
The space $\pfr$ is path connected.
\end{theorem}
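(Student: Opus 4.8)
The plan is to apply Lemma~\ref{thm11} to $X=\pfr$, taking for the covering the family of all its degree-sequence strata $S_{(d_1,d_2,d_3)}=\{\,\phi\in\pfr\mid\phi\ \mbox{has degree sequence}\ (d_1,d_2,d_3)\,\}$ for $0\le d_1<d_2<d_3\le 4$ (ten of them, each nonempty), and the two distinguished members $V=S_{(0,1,2)}$ and $U=S_{(1,2,3)}$. Throughout I would use the elementary fact already exploited in the proof of Theorem~\ref{thm12}: a polynomial map is automatically an embedding as soon as its first component has degree~$1$, or its first component is constant and its second component has degree~$1$; hence $S_{(0,1,2)},S_{(0,1,3)},S_{(0,1,4)},S_{(1,2,3)},S_{(1,2,4)},S_{(1,3,4)}$ each coincide with the set of \emph{all} maps in $\mathcal{A}_4$ with the prescribed degree sequence.

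First I would check condition (i) of Lemma~\ref{thm11}: for $v\in V$ and $u\in U$, the straight-line homotopy $\Phi_s=(1-s)\,v+s\,u$ has, for $s\in\ozo$, components with leading coefficients $s\,a_{1u}$, $s\,b_{2u}$, $s\,c_{3u}$, all nonzero, so $\Phi_s\in S_{(1,2,3)}=U\subseteq\pfr$ on $\ozo$, while $\Phi_0=v$ and $\Phi_1=u$; this is word for word part~$i)$ of the proof of Theorem~\ref{thm12}.

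Next I would check condition (ii): joining an arbitrary $\phi=\fgh\in\pfr$ to $U\cup V$. If $\phi\in U\cup V$ there is nothing to do. If $\phi\in S_{(2,3,4)}=\pfrt$, Corollary~\ref{cor4} already furnishes a path in $\pfr$ from $\phi$ to a knot of degree sequence $(0,1,2)$, i.e.\ to an element of $V$. In every remaining case $\deg f\le 1$ and $\deg h\ge 3$, and I would use the straight-line homotopy $\Phi_s$ that interpolates each component towards the matching monomial of the model $(t,t^2,t^3)\in U$ precisely when that component has the wrong degree: replace $f(t)$ by $(1-s)f(t)+s\,t$ if $\deg f=0$; replace $g(t)$ by $(1-s)g(t)+s\,t^2$ if $\deg g\neq 2$; replace $h(t)$ by $(1-s)h(t)+s\,t^3$ if $\deg h=4$. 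Inspecting leading coefficients, for $s\in\ozo$ the map $\Phi_s$ has degree sequence $(1,2,3)$, $(1,2,4)$ or $(1,3,4)$ --- hence is an embedding lying in $\pfr$ --- and $\Phi_0=\phi$, $\Phi_1\in U$. With both conditions verified, Lemma~\ref{thm11} gives that $\pfr$ is path connected.

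The only genuinely hard step is the treatment of $S_{(2,3,4)}$, which I simply reduce to Corollary~\ref{cor4} (and hence ultimately to Propositions~\ref{thm14} and~\ref{thm15}); everything else is bookkeeping with explicit polynomial homotopies. The single point needing care is that these interpolating homotopies never leave $\pfr$, i.e.\ that no component collapses in degree along the way; but here, unlike in the proof of Theorem~\ref{thm12}, no sign adjustment is needed, because each interpolation either strictly raises the degree of a too-low component or, when lowering a too-high component, is dominated on $\ozo$ by the already-present top-degree term, so the degree sequence stays strictly increasing and bounded by~$4$.
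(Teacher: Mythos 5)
Your proposal is correct and follows essentially the same route as the paper: the same decomposition of $\pfr$ into its ten degree-sequence strata, the same appeal to Lemma~\ref{thm11} with the $(0,1,2)$ stratum as one distinguished member, straight-line/monomial interpolations (as in Theorem~\ref{thm12}) for the intermediate strata, and Corollary~\ref{cor4} to handle the hard stratum $\pfrt$. The only differences are cosmetic --- you target the $(1,2,3)$ stratum where the paper targets $(1,3,4)$, and you avoid the sign adjustment by keeping the first component of degree~$1$ along your homotopies, which indeed guarantees the maps stay embeddings with strictly increasing degrees.
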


\begin{proof}
Consider the following sets:\vw 
\hskip10mm$V_1=\{\hf\phi\in\mathcal{A}_4\mid\phi$ has degree sequence $(\hp0,1,2\hp)\hf\}$\hs,\vo
\hskip10mm$V_2=\{\hf\phi\in\mathcal{A}_4\mid\phi$ has degree sequence $(\hp0,1,3\hp)\hf\}$\hs,\vo
\hskip10mm$V_3=\{\hf\phi\in\mathcal{A}_4\mid\phi$ has degree sequence $(\hp0,1,4\hp)\hf\}$\hs,\vo
\hskip10mm$V_4=\{\hf\phi\in\mathcal{A}_4\mid\phi$ has degree sequence $(\hp0,2,3\hp)\hf\}\cap\pfr$\hs,\vo
\hskip10mm$V_5=\{\hf\phi\in\mathcal{A}_4\mid\phi$ has degree sequence $(\hp0,2,4\hp)\hf\}\cap\pfr$\hs,\vo
\hskip10mm$V_6=\{\hf\phi\in\mathcal{A}_4\mid\phi$ has degree sequence $(\hp0,3,4\hp)\hf\}\cap\pfr$\hs,\vo
\hskip10mm$V_7=\{\hf\phi\in\mathcal{A}_4\mid\phi$ has degree sequence $(\hp1,2,3\hp)\hf\}$\hs,\vo
\hskip10mm$V_8=\{\hf\phi\in\mathcal{A}_4\mid\phi$ has degree sequence $(\hp1,2,4\hp)\hf\}$\hs,\vo
\hskip10mm$V_9=\{\hf\phi\in\mathcal{A}_4\mid\phi$ has degree sequence $(\hp1,3,4\hp)\hf\}$\hw and\vo
\hskip8.6mm$V_{10}=\{\hf\phi\in\mathcal{A}_4\mid\phi$ has degree sequence $(\hp2,3,4\hp)\hf\}\cap\pfr=\pfrt$\hp.\vw

\noindent Note that these sets are pairwise disjoint and their union is exactly equal to $\pfr$. Using the similar argument as used in the proof of theorem \ref{thm12}, one can show that there is a path in $\pfr$ from an arbitrary element of $V_1$ to an arbitrary element of $V_9$. Also by the similar argument as used in the second part of the proof of theorem \ref{thm12}, it is easy to produce a path from an arbitrary element of $\bigcup_{i=2}^8 V_i$ to an element of $V_9$. Using corollary \ref{cor4}, one has a path from an arbitrary element of $V_{10}$ to an element of $V_1$. This satisfies both the assumptions of lemma \ref{thm11} and hence $\pfr$ is path connected. 
\end{proof}

We have proved that the spaces $\pt$ and $\pfr$ are path connected, so in general, we would like to conjecture the following:

\begin{conjecture}\label{coj2}
The space $\pd$, for $d\geq3$, is path connected. 
\end{conjecture}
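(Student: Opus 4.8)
\medskip\noindent\emph{A strategy towards a proof.} The plan is to argue by induction on $d$, the cases $d=3$ and $d=4$ being Theorems \ref{thm12} and \ref{thm16}. So assume $d\ge 5$ and that $\mathcal{P}_{d-1}$ is path connected. Since the inclusion $\mathcal{P}_{d-1}\hookrightarrow\pd$ is continuous and $\mathcal{P}_{d-1}$ is precisely the set $\{\phi\in\pd:\deg h_\phi\le d-1\}$, the induction hypothesis puts every polynomial knot of $\pd$ whose third component has degree $<d$ into a single path component $\mathcal{C}$ of $\pd$; in particular $\mathcal{C}$ already swallows the bottom stratum of knots of degree sequence $(0,1,2)$, which is itself disconnected but whose sign pieces are linked through $\mathcal{P}_{d-1}$. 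It therefore suffices to show that every $\phi\in\pd$ with $\deg h_\phi=d$ can be joined to $\mathcal{C}$ by a path in $\pd$, and one stratifies such $\phi$ by the pair $(\deg f_\phi,\deg g_\phi)$.

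The strata with $\deg f_\phi\le 1$ are disposed of exactly as in the proofs of Theorems \ref{thm12} and \ref{thm16}: when $\deg f_\phi=1$ the first component of any deformation that keeps its leading term stays linear, hence injective, so the deformation automatically remains among embeddings, and one pulls $\deg g_\phi$ down to $2$ and then $\deg h_\phi$ down to $3$ by straight-line homotopies onto suitably signed monomials $\pm t^2,\pm t^3$, landing in $\mathcal{C}$ (here $d\ge 5$ is what makes $3\le d-1$); when $\deg f_\phi=0$ and $\deg g_\phi=1$ the map is injective through its second component and one just pulls $\deg h_\phi$ down to $2$; and a stratum with $\deg f_\phi=0$, $\deg g_\phi\ge 2$ is first pushed to a $\deg f_\phi=1$ stratum by the safe homotopy $f_s=(1-s)f_\phi+s\,t$, which is injective for every $s\in\ozo$.

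The heart of the matter is the strata with $\deg f_\phi\ge 2$, the generic one being $\pdt$, and for these the goal is the degree-$d$ analogue of Corollary \ref{cor4}: \emph{every polynomial knot in $\pdt$ is path equivalent in $\pd$ to one whose third component has degree $<d$.} Following Propositions \ref{thm15} and \ref{thm14} this splits into a normal-form step and a degree-lowering step. The normal-form step is routine for all $d$: killing the constant terms, subtracting suitable constant multiples of $f$ from $g$ and of $f$ and $g$ from $h$ so as to delete the $t^{d-2}$-coefficient of $g$ and the $t^{d-1}$- and $t^{d-2}$-coefficients of $h$, and rescaling the three leading coefficients to $\pm1$, all amount to composing with an $s$-dependent family of affine shears and positive diagonal scalings of $\rt$, and these preserve both the embedding property and the degree sequence; they carry an arbitrary $\phi\in\pdt$ to a normal form $\phi_0$ depending on $3(d-3)$ residual real parameters and on a sign vector $(e_1,e_2,e_3)\in\{-1,1\}^3$. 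The degree-lowering step is the genuinely new ingredient: one must write down, as in Proposition \ref{thm14}, an explicit one-parameter family $\Phi_s$ in $\pd$ with $\Phi_0=\phi_0$, with $\Phi_s\in\pdt$ for $s\in\ozo$ (each old term scaled by $1-s$, plus freshly inserted low-degree terms), and with the third component of $\Phi_1$ of degree $\le d-1$; because such a $\Phi_s$ is \emph{not} a composition with diffeomorphisms --- it genuinely simplifies the knot --- one has to check by hand that it stays an embedding, i.e.\ that the resultant whose real roots are the double points of the projection (cf.\ the proof of Theorem \ref{thm3}) and the discriminant of $\Phi_s'$ do not vanish on $\ozo$, just as the reparametrisation in Proposition \ref{thm14} kept $3a^2+4e_2b+\tfrac{4s}{1-s}$ positive.

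The main obstacle is this last step, and it is why the statement is still a conjecture. For $d=4$ the normal form has three parameters and the embedding criterion of Proposition \ref{thm13} is a single, very simple semialgebraic condition, so one uniform homotopy settles every case at once; already for $d=5$ the normal form carries six parameters, and the non-embeddings cut out, inside its parameter space, a genuine real-algebraic hypersurface --- a union of a resultant component and a discriminant component --- whose complement has many chambers, and it is no longer evident that one degree-lowering homotopy can avoid it in all of them. One therefore expects to need either a finite but rapidly growing case analysis over these chambers, or a more conceptual replacement of the explicit-formula method, for instance a transversality argument showing that within each knot-type chamber the regular projections $(f,g)$ admitting \emph{some} degree-$(d-1)$ lift $h$ form a dense, path-connected subset. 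I would begin by carrying out the explicit-homotopy approach in degree $5$, where the case analysis is still tractable, to see whether the pattern of Proposition \ref{thm14} really persists; an affirmative answer there would establish the conjecture for $d=5$ and would very likely reveal the inductive mechanism for arbitrary $d$.
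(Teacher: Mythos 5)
Note first that the statement you are addressing is Conjecture \ref{coj2}: the paper offers no proof of it, only the cases $d=3$ and $d=4$ (Theorems \ref{thm12} and \ref{thm16}), so there is nothing in the paper to compare your argument against step by step. What you have written is, as you say yourself, a strategy rather than a proof, and its skeleton is sensible and consistent with how the paper treats $d\leq4$: induct on $d$, absorb the strata with $\deg f_\phi\leq1$ by straight-line homotopies whose intermediate maps are injective through a linear component (the mechanism of Theorems \ref{thm12} and \ref{thm16}), and reduce everything to a degree-lowering statement generalizing Corollary \ref{cor4}, i.e.\ that every knot in $\pdt$ is path equivalent in $\pd$ to one whose third component has degree at most $d-1$. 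Your normal-form step (translations, shears by $f$ and $g$, positive rescalings, all realized as paths of affine automorphisms composed with the knot) is also fine and mirrors Proposition \ref{thm15}.

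The genuine gap is exactly the step you flag: for $d\geq5$ there is no analogue of the embedding criterion of Proposition \ref{thm13}, and hence no way offered to certify that the proposed degree-lowering family $\Phi_s$ remains injective and regular for all $s\in\ozo$. In the $d=4$ case the whole of $\pfrt$ reduces to a three-parameter normal form governed by one explicit semialgebraic condition, and Proposition \ref{thm14} exploits a monotonicity of that condition along the homotopy ($3a^2+4e_2b+\tfrac{4s}{1-s}$ stays positive); nothing in your outline replaces this for the $3(d-3)$-parameter normal forms in degree $d\geq5$, where the non-embedding locus is a genuinely complicated resultant--discriminant hypersurface. Since this is the only step that is not routine, the proposal does not establish the conjecture. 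I would also caution you about the fallback you suggest, namely that within each knot-type chamber the projections admitting a degree-$(d-1)$ lift are dense: Theorem \ref{thm7} shows that for generic degree-$(4,5)$ projections of $5_2$ even a degree-$6$ lift fails to exist, so density of low-degree liftability with the crossing pattern held fixed is in tension with what the paper proves. Note, however, that a path in $\pd$ is allowed to change knot type at a parameter where the top degree drops (this is what happens at $s=1$ in Proposition \ref{thm14}, where everything collapses to an unknot), so the degree-lowering target need not represent the same knot; any successful attack on the conjecture should exploit this extra freedom rather than try to preserve the crossing data, and your plan as written does not yet do so.
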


\begin{proposition}\label{thm18}
For any fixed element $e=(e_1,e_2,e_3)$ in $\{-1,1\}^3$, the space $\mathcal{N}_{4e}=\big\{\hs \phi\in\mathcal{\tilde{P}}_4\mid\; t\xrightarrow{\phi}(\hp e_1t^2+a_\phi\hp t,e_2t^3+b_\phi\hp t,e_3t^4+c_\phi\hp t\hp)$\hw for some\hw $a_\phi,b_\phi,c_\phi\in\ro\hs\big\}$\\ is path connected.
\end{proposition}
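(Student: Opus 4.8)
The plan is to show that $\mathcal{N}_{4e}$ is path connected by reducing every member to a single canonical knot by explicit paths, exactly in the spirit of Propositions~\ref{thm14} and~\ref{thm15}. Fix $e=(e_1,e_2,e_3)$. A typical $\phi\in\mathcal{N}_{4e}$ is $t\mapsto(e_1t^2+a\hf t,\,e_2t^3+b\hf t,\,e_3t^4+c\hf t)$ for some $a,b,c\in\ro$, and by Corollary~\ref{cor3} such a $\phi$ is an embedding iff $3a^2+4e_2b>0$ or $e_1a^3+2e_1e_2ab+e_3c\neq0$. So $\mathcal{N}_{4e}$ is naturally identified with the (open) subset $S_e\subseteq\ro^3$ cut out by this disjunction, and it suffices to show $S_e$ is path connected in $\ro^3$; any path in $S_e$ pulls back to a path in $\mathcal{N}_{4e}$ because the correspondence $(a,b,c)\mapsto\phi$ is continuous into $\mathcal{A}_4$ and lands in $\mathcal{\tilde P}_4$ whenever the embedding condition holds.

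First I would dispose of the ``half-space'' piece: the set $H=\{(a,b,c):3a^2+4e_2b>0\}$ is convex (it is an open half-space after the affine change $b\mapsto 3a^2+4e_2b$ is not linear, but for fixed $a$ it is a ray in $b$, and the whole region is star-shaped with respect to any point with $e_2b$ very large), hence path connected; concretely, from any $(a,b,c)\in H$ run the straight segment to $(0,e_2,0)$, checking $3((1-s)a)^2+4e_2((1-s)b+e_2 s)=3(1-s)^2a^2+4(1-s)e_2b+4s>0$ for $s\in(0,1]$, which is essentially the computation already done in case $(i)$ of Proposition~\ref{thm14}. This shows every element of $H$ is joined inside $\mathcal{N}_{4e}$ to the fixed knot $\tau_0:\,t\mapsto(e_1t^2,\,e_2t^3+e_2t,\,e_3t^4)$, and after a further obvious linear contraction of the quadratic term one may instead land on $(0,e_2,0)\leftrightarrow t\mapsto(0,e_2t,e_3t^2)$ — but note this last point is \emph{not} in $\mathcal{N}_{4e}$ since it has the wrong form; so the honest target to fix is a point of $S_e$, e.g. $p_0=(0,1,0)$ when $e_2=1$ (resp.\ $(0,-1,0)$ when $e_2=-1$), which lies in $H\subseteq S_e$.

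Next I would handle the ``generic'' piece $G=\{(a,b,c):e_1a^3+2e_1e_2ab+e_3c\neq0\}$. Writing $c=c(a,b)$ with $e_3c\neq -e_1a^3-2e_1e_2ab$, for each fixed $(a,b)$ the allowed $c$ form $\ro$ minus one point, so $G$ is an open set whose complement is the $2$-dimensional surface $c=-e_1e_3a^3-2e_1e_2e_3ab$; since removing a codimension-one graph from $\ro^3$ leaves at most two components, one must move a given point of $G$ into $H$ (which, being open and meeting both sides of that surface, reconnects them). This is exactly the content of case $(ii)$ of Proposition~\ref{thm14}: the path $\varUpsilon_s$ there deforms $\phi$ through embeddings to $t\mapsto(0,-e_2t,-2e_3t^2)$, and by the same bookkeeping one can instead steer into the interior of $H$ while keeping $e_1a^3+2e_1e_2ab+e_3c$ constant and nonzero, then invoke the previous paragraph.

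Combining, every point of $S_e=H\cup G$ is path-connected within $\mathcal{N}_{4e}$ to the fixed basepoint $p_0\in H$, so $\mathcal{N}_{4e}$ is path connected; formally one applies Lemma~\ref{thm11} with the cover $\{H,G\}$, condition $(i)$ being the straight-line argument above and condition $(ii)$ being the steering argument. The main obstacle is purely the case $(ii)$ bookkeeping: one must exhibit an explicit homotopy that keeps the curve an embedding (via Corollary~\ref{cor3}) while driving $(a,b,c)$ from the ``generic'' region into the ``discriminant-positive'' region without ever crossing the bad surface — this is the step where the algebra of the perturbations $b\mapsto b+\tfrac{e_2 s}{1-s}$, $c\mapsto c-\tfrac{e_1e_3 a s}{1-s}$ (as in Proposition~\ref{thm14}) has to be checked to preserve the relevant inequality/non-vanishing, and I would model it line-for-line on that proof.
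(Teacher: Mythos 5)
Your overall strategy is the same as the paper's: reduce everything to Corollary \ref{cor3}, identify $\mathcal{N}_{4e}$ with the set of parameters $(a,b,c)$ satisfying $3a^2+4e_2b>0$ or $e_1a^3+2e_1e_2ab+e_3c\neq0$, and join every point to a basepoint lying in the first region (your $p_0=(0,e_2,0)$ is exactly the paper's $\varphi_0$; your two components of $G$ are the paper's $\mathcal{N}_{4e}^2$ and $\mathcal{N}_{4e}^3$). However, the one path you actually write down is wrong. The straight segment from $(a,b,c)$ to $(0,e_2,0)$ does not stay in $H$: the inequality you assert, $3(1-s)^2a^2+4(1-s)e_2b+4s>0$, fails for instance at $a=10$, $e_2b=-74$ (so $3a^2+4e_2b=4>0$) and $s=1/2$, where it equals $75-148+2=-71$. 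For the same reason $H$ is neither convex nor star-shaped about a point with $e_2b$ large (take $a=\pm10$, $e_2b=-74$ and their midpoint; and for any fixed center one can scale this example to defeat it). This is not harmless: $H$ places no condition on $c$, so one may choose $c$ so that at the moment the quadratic condition fails along your segment the cubic expression $e_1a^3+2e_1e_2ab+e_3c$ vanishes too, and then the segment leaves the space of embeddings altogether. Nor is this ``essentially case $(i)$ of Proposition \ref{thm14}'': there $a$ is held fixed and $b$ is pushed by $+\frac{e_2s}{1-s}$, which preserves $3a^2+4e_2b>0$ trivially, whereas your segment shrinks $a$ and $b$ at the same linear rate and the quadratic-versus-linear mismatch is exactly what breaks the inequality. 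The paper's fix is the weighted scaling $a\mapsto sa$, $b\mapsto s^2b+e_2(1-s^2)$, $c\mapsto sc$, for which the quantity becomes $s^2(3a^2+4e_2b)+4(1-s^2)>0$; alternatively, observing that $(a,b,c)\mapsto\big(a,\,3a^2+4e_2b,\,c\big)$ is a homeomorphism of $\ro^3$ carrying $H$ onto an open half-space would also repair this step.

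The second half of your argument is the right idea but is only a sketch, and the model you cite points the wrong way: case $(ii)$ of Proposition \ref{thm14} moves $b$ in the direction $-e_2$, i.e.\ away from $H$, so copying it ``line-for-line'' will not steer you into $H$. What does work, and what you gesture at, is the explicit path $\lambda\mapsto(a,\,b+\lambda e_2,\,c-2e_1e_3a\lambda)$ for $\lambda\in[0,\Lambda]$: it keeps $e_1a^3+2e_1e_2ab+e_3c$ constant (hence every stage is an embedding by Corollary \ref{cor3}) while $3a^2+4e_2b+4\lambda$ eventually becomes positive. With that path written out and the first step corrected, your two-piece cover $\{H,G\}$ together with Lemma \ref{thm11} closes the argument; the paper instead joins the two signs of the cubic expression to the basepoints $t\mapsto(e_1t^2,\,e_2t^3,\,e_3t^4\pm e_3t)$ by cubic-weighted scalings and then crosses into $\mathcal{N}_{4e}^1$. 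As submitted, though, the key homotopies are either incorrect or deferred, so the proof is not complete.
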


\begin{proof}
Suppose $e=(e_1,e_2,e_3)$ be an arbitrary element in the set $\{-1,1\}^3$. By corollary \ref{cor3}, an element $\phi\in\mathcal{A}_4$ given by $t\mapsto(\hp e_1t^2+a_\phi\hp t,e_2t^3+b_\phi\hp t,e_3t^4+c_\phi\hp t\hp)$ is an embedding if and only if $3a_\phi^2+4e_2b_\phi>0$\, or\, $e_1a_\phi^3+2e_1e_2a_\phi b_\phi+e_3c_\phi\neq0$. Therefore, it is easy to see that the set $\mathcal{N}_{4e}$ is the union of the following sets:\vw

$\hskip20mm \mathcal{N}_{4e}^1=\big\{\hf\phi\in\mathcal{N}_{4e}\mid 3a_\phi^2+4e_2b_\phi>0\hf\big\}$\hf,\vo

$\hskip20mm\mathcal{N}_{4e}^2=\big\{\hf\phi\in\mathcal{N}_{4e}\mid e_1a_\phi^3+2e_1e_2a_\phi b_\phi+e_3c_\phi>0\hf\big\}$\, and\vo

$\hskip20mm\mathcal{N}_{4e}^3=\big\{\hf\phi\in\mathcal{N}_{4e}\mid e_1a_\phi^3+2e_1e_2a_\phi b_\phi+e_3c_\phi<0\hf\big\}$\hf.\vw

\noindent We show that every element of the sets $\mathcal{N}_{4e}^1$, $\mathcal{N}_{4e}^2$ and $\mathcal{N}_{4e}^3$ is connected by a path in $\mathcal{N}_{4e}$ to a fixed element $\varphi_0\in\mathcal{N}_{4e}^1$ given by $t\mapsto\big(\hp e_1t^2,e_2t^3+e_2t,e_3t^4\hp\big)$.\vo

$i)$ Let $\varphi$ be an arbitrary element of the space $\mathcal{N}_{4e}^1$. Suppose $F:\zo\to\mathcal{A}_4$ be a map which is given by $F(s)=F_s$ for $s\in\zo$, where $$F_s(t)=\big(\hp e_1t^2+a_\varphi s\hp t,\,e_2t^3+(b_\varphi s^2+ e_2-e_2s^2)t,\,e_3t^4+c_\varphi s\hp t\hp\big)$$ for $t\in\ro$. For $s\in\ozo$, since $3(a_\varphi s)^2+4e_2(b_\varphi s^2+ e_2-e_2s^2)=s^2(3a_\varphi^2+4e_2b_\varphi)+4(1-s^2)>0$, so the map $F_s$ is an element of the space $\mathcal{N}_{4e}^1$.  Also, we have $F_0=\varphi_0$ and $F_1=\varphi$. This shows that $F$ is a path in $\mathcal{N}_{4e}^1$ from $\varphi_0$ to $\varphi$.\vo 

$ii)$ Let $\psi$ be an arbitrary element of the space $\mathcal{N}_{4e}^2$. We choose an element $\psi_0\in \mathcal{N}_{4e}^2$ given by $t\mapsto\big(\hp e_1t^2,e_2t^3,e_3t^4+e_3t\hp\big)$. Let a map $G:\zo\to\mathcal{A}_4$ be given by $G(s)=G_s$ for $s\in\zo$, where $$G_s(t)=\big(\hp e_1t^2+a_\psi s\hp t,\,e_2t^3+b_\psi s^2\hp t,\,e_3t^4+(c_\psi\hp s^3+ e_3-e_3s^3)\hp t\hp\big)$$ for $t\in\ro$. For $s\in\ozo$, the map $G_s$ is an element of the space $\mathcal{N}_{4e}^2$, because $e_1(a_\psi s)^3+2e_1e_2(a_\psi s)(b_\psi s^2)+e_3(c_\psi s^3+e_3-e_3s^3)=s^3(e_1a_\psi^3+2e_1e_2a_\psi b_\psi+e_3c_\psi)+1-s^3>0$.  Also, since $G_0=\psi_0$ and $G_1=\psi$, so the map $G$ is a path in $\mathcal{N}_{4e}^2$ from $\psi_0$ to $\psi$. Now take a map $\varPhi:\zo\to\mathcal{A}_4$ given by $\varPhi(s)=\varPhi_s$ for $s\in\zo$, where $$\varPhi_s(t)=\big(\hp e_1t^2,e_2t^3+e_2(1-s)t,e_3t^4+e_3 s\hp t\hp\big)$$ for $t\in\ro$. For $s\in\ozo$, the map $\varPhi_s$ is an element of the space $\mathcal{N}_{4e}^1$. Note that $\varPhi_0=\varphi_0$ and $\varPhi_1=\psi_0$. This shows that $\varPhi$ is a path in $\mathcal{N}_{4e}$ from $\varphi_0$ to $\psi_0$. The maps $G$ and $\varPhi$ both together gives a path in $\mathcal{N}_{4e}$ from $\varphi_0$ to $\psi$.\vo 

$iii)$ Let $\sigma$ be an arbitrary element of the space $\mathcal{N}_{4e}^3$. Take an element $\sigma_0\in \mathcal{N}_{4e}^3$ given by $t\mapsto\big(\hp e_1t^2,e_2t^3,e_3t^4-e_3t\hp\big)$. Let $H:\zo\to\mathcal{A}_4$ be a map which is given by $H(s)=H_s$ for $s\in\zo$, where $$H_s(t)=\big(\hp e_1t^2+a_\sigma s\hp t,\,e_2t^3+b_\sigma s^2\hp t,\,e_3t^4+(c_\sigma s^3-e_3+e_3s^3) t\hp\big)$$ for $t\in\ro$. For $s\in\ozo$, since $e_1(a_\sigma s)^3+2e_1e_2(a_\sigma s)(b_\sigma s^2)+e_3(c_\sigma s^3-e_3+e_3s^3)=s^3(e_1a_\sigma^3+2e_1e_2a_\sigma b_\sigma+e_3c_\sigma)-1+s^3<0$, so the map $H_s$ is an element of the space $\mathcal{N}_{4e}^3$.  Also $H_0=\sigma_0$ and $H_1=\sigma$, so $H$ is a path in $\mathcal{N}_{4e}^3$ from $\sigma_0$ to $\sigma$. Let us take a map $\varPsi:\zo\to\mathcal{A}_4$ given by $\varPsi(s)=\varPsi_s$ for $s\in\zo$, where $$\varPsi_s(t)=\big(\hp e_1t^2,e_2t^3+e_2(1-s)t,e_3t^4-e_3 s\hp t\hp\big)$$ for $t\in\ro$. It is easy to see that $\varPsi_0=\varphi_0$ and $\varPsi_1=\sigma_0$. For $s\in\ozo$, the map $\varPsi_s$ is an element of the space $\mathcal{N}_{4e}^1$. This shows that $\varPsi$ is a path in $\mathcal{N}_{4e}$ from $\varphi_0$ to $\sigma_0$. The maps $H$ and $\varPsi$ both together gives a path in $\mathcal{N}_{4e}$ from $\varphi_0$ to $\sigma$.
\end{proof}

\begin{theorem}\label{thm19}
The space $\pfrt$ has eight path components.
\end{theorem}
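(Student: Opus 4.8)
The plan is to combine Remark \ref{rem6} with Proposition \ref{thm18} and the path--connectedness results already at hand. By Remark \ref{rem6}, the space $\pfrt$ decomposes into at least eight pieces indexed by the sign vector $e=(e_1,e_2,e_3)\in\{-1,1\}^3$: for any $\phi=(f,g,h)\in\pfrt$ the leading coefficients of $f,g,h$ are nonzero, hence have well--defined signs, and no path in $\pfrt$ can change any of these signs (the intermediate value theorem argument of Theorem \ref{thm9}). So $\pfrt$ has \emph{at least} eight path components, and the sets
\[
\mathcal{Q}_e=\big\{\hs\phi\in\pfrt\mid \operatorname{sign}(a_{2\phi})=e_1,\ \operatorname{sign}(b_{3\phi})=e_2,\ \operatorname{sign}(c_{4\phi})=e_3\hs\big\}
\]
for $e\in\{-1,1\}^3$ are pairwise--separated open--closed subsets covering $\pfrt$. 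It therefore suffices to show that each $\mathcal{Q}_e$ is path connected, which gives exactly eight.

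First I would reduce an arbitrary $\phi\in\mathcal{Q}_e$ to normal form. Proposition \ref{thm15} produces, for any $\phi\in\pfrt$, signs $e_1',e_2',e_3'$ and reals $a,b,c$ so that the normalized knot $\psi:\ t\mapsto(e_1't^2+at,\,e_2't^3+bt,\,e_3't^4+ct)$ is an embedding path--equivalent to $\phi$ \emph{in $\pfrt$}. Inspecting the three steps of that proof, the path stays inside $\pfrt$ and the transformations used (adding lower--order multiples of one component to another, and positive scalings of each component) do not change the signs of the leading coefficients; hence $e_i'=e_i$ and the whole path lies in $\mathcal{Q}_e$. Thus every point of $\mathcal{Q}_e$ is joined, within $\mathcal{Q}_e$, to the subset $\mathcal{N}_{4e}$ of normalized knots. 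Proposition \ref{thm18} says $\mathcal{N}_{4e}$ is path connected; moreover the explicit paths in its proof (the families $F_s,G_s,H_s,\varPhi_s,\varPsi_s$) all keep the leading coefficients equal to $e_1,e_2,e_3$, so they are paths in $\mathcal{Q}_e$. Concatenating, $\mathcal{Q}_e$ is path connected.

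Finally I would assemble the count: the eight sets $\mathcal{Q}_e$ are nonempty (each contains $\varphi_0:\ t\mapsto(e_1t^2,\,e_2t^3+e_2t,\,e_3t^4)$, which is an embedding by Corollary \ref{cor3} since $3\cdot0+4e_2\cdot e_2=4>0$), pairwise disjoint, open and closed in $\pfrt$, and each is path connected; so $\pfrt$ has exactly eight path components.

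The main obstacle I expect is bookkeeping rather than anything deep: one must check carefully that \emph{none} of the moves invoked from Propositions \ref{thm15} and \ref{thm18} — in particular the affine clearing of constant terms, the shearing that kills the $t^3,t^2$ terms of $g,h$, and the scalings — ever drives a leading coefficient through $0$ or otherwise leaves $\pfrt$ or $\mathcal{Q}_e$. Since those propositions are already proved with paths manifestly inside $\pfrt$ and the sign data is visibly preserved, this verification is routine; the genuinely substantive input is Proposition \ref{thm18}, which has been established.
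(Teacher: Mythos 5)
Your proposal is correct and follows essentially the same route as the paper: decompose $\pfrt$ by the signs of the leading coefficients into eight classes that no path can leave, use the paths from the proof of Proposition \ref{thm15} to reach the normal-form set $\mathcal{N}_{4e}$, and invoke Proposition \ref{thm18} for its path connectedness. Your extra checks (sign preservation along the explicit paths and nonemptiness of each class) are sound refinements of the same argument.
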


\begin{proof}
For an element $e=(e_1,e_2,e_3)$ in $\{-1,1\}^3$, we consider the following set:\vw

\hskip20mm$\mathcal{\tilde{P}}_{4e}=\big\{\hs \tau\in\mathcal{\tilde{P}}_4\mid\,e_1a_{2\tau}>0$, $e_2b_{3\tau}>0$\, and\, $e_3c_{4\tau}>0\hs\big\}$\hp.\vw

\noindent Note that $\mathcal{\tilde{P}}_4=\bigcup_e\mathcal{\tilde{P}}_{4e}$\hp. Also, for $e\neq e'$, there is no path in $\mathcal{\tilde{P}}_4$ from an element of the set $\mathcal{\tilde{P}}_{4e}$ to an element of the set $\mathcal{\tilde{P}}_{4e'}$\hp. Now it is enough to prove that for each $e\in\{-1,1\}^3$, the space $\mathcal{\tilde{P}}_{4e}$ is path connected.\vo 

Let $e=(e_1,e_2,e_3)$ be an arbitrary element in the set $\{-1,1\}^3$. From the proof of proposition \ref{thm15}, one can see that every element in $\mathcal{\tilde{P}}_{4e}$ is connected by a path in $\mathcal{\tilde{P}}_{4e}$ to some element in $\mathcal{N}_{4e}$. Since by proposition \ref{thm18}, the space $\mathcal{N}_{4e}$ is path connected, so the space $\mathcal{\tilde{P}}_{4e}$ is also path connected.
\end{proof}

\subsection{ The space $\pft$}\label{sec3.3}

\noindent\hskip6mm From proposition \ref{thm5}, we can realize only the unknot $0_1$, the left hand trefoil $3_1$ and the right hand trefoil $3_1^*$ in degree $5$. In fact, Shastri \cite{ars} had shown a realization of the trefoil knot in degree $5$.\vo 

A {\it mathematica} plot of the Shastri's trefoil $t\mapsto\big(\hp t^3-3\hp t,\hp t^4-4\hp t^2,\hp t^5-10\hp t\hp\big)$ and its mirror image $t\mapsto\big(\hp t^3-3\hp t,\hp t^4-4\hp t^2,\hp -(t^5-10\hp t)\hp\big)$ is shown in the figure bellow:
\begin{figure}[H]
\begin{center}
\includegraphics[scale=0.44]{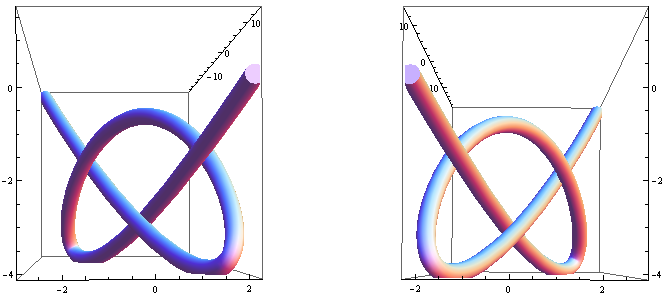}
\caption{Representations of $3_1$ and $3_1^*$}
\end{center}
\end{figure} 
The polynomial degree of the trefoil knot is $5$. The degrees $3$ and $4$ of the first and second components are minimal in the sense that there is no polynomial representation of the trefoil knot belonging to the space $\pf\setminus\pft$. By corollary \ref{cor2}, the right hand trefoil and the left hand trefoil each corresponds to at least $4$ path components of the space $\pft$. For example, the knots\\
{\small $t\mapsto\big(\hp t^3-3\hp t,\hp t^4-4\hp t^2,\hp t^5-10\hp t\hp\big)$\hp,\hw
$t\mapsto\big(-(t^3-3\hp t),\hp-(t^4-4\hp t^2),\hp t^5-10\hp t\hp\big)$\hp,\\ 
$t\mapsto\big(-(t^3-3\hp t),\hp t^4-4\hp t^2,\hp-(t^5-10\hp t)\hp\big)$\hw and\hw
$t\mapsto\big(\hp t^3-3\hp t,\hp-(t^4-4\hp t^2),\hp-(t^5-10\hp t)\hp\big)$}\\
\noindent represent the same trefoil, but they lie in the different path components of the space $\pft$. Also, by the same corollary, the unknot has $8$ path components in the space $\pft$. Thus, the space $\pft$ has at least $16$ path components corresponding to the knots $0_1, 3_1$ and $3_1^*$. We summarize the details in the following table:\vskip5mm
\begin{center}
\begin{tabular}[H]{|p{5mm}| p{12mm} | p{33mm}| p{57mm}|}
\hline Sr. No. & Knot type & Polynomial degree of a knot type & Number of path components of $\pft$ corresponding to a knot type\\ \hline
$1.$ &$0_1$ & 1 & at least 8\\
$2.$ &$3_1$& 5 & at least 4\\ 
$3.$ &$3_1^*$& 5 & at least 4\\ \hline
\multicolumn{3}{|p{60mm}|}{Number of path components of $\pft$} & at least 16\\[2.5mm]\hline
\end{tabular}
\end{center}
\subsection{ The space $\pst$}\label{sec3.4}

\noindent\hskip6mm The knots which have polynomial representation in degree $5$ naturally have their representation in degree $6$ as well. By proposition \ref{thm2}, for a knot $\kappa$ having a polynomial representation in degree $6$, the minimal crossing number $c[\kappa]$ must be less than or equal to $6$. Since the knots $5_1, 5_1^*, 3_1\#3_1, 3_1^*\#3_1^*$ and $3_1\#3_1^*$ are $4$-superbridge, so they can not be represented in degree $6$ (see proposition \ref{thm2}). Also, by theorem \ref{thm7}, it is almost impossible to represent the knots $5_2$ and $5_2^*$ in the space $\pst$. The same is true for the knots $6_1, 6_1^*, 6_2, 6_2^*$ and $6_3$. But, we can represent the figure-eight knot in the space $\pst$. In fact, we have a polynomial representation $t\mapsto\fght$ of the figure-eight knot ($4_1$ knot) with degree sequence $(4, 5, 6)$, where{\footnotesize
\begin{eqnarray*}
f(t)&=& (-4.8 + t)\ho  (-0.3 + t)\ho  (3.6 + t)\ho  (10 + t)\hp,\\
g(t)&=& (-4.8 + t)\ho  (-3.3 + t)\ho  (-0.3 + t)\ho  (2.3 + t)\ho  (4.6 + t)\hw\mbox{and}\\
h(t)&=& 0.5\ho  t\ho  (-0.19 + t)\ho  (21.22 - 9.19\ho  t + t^2)\ho  (17.78 + 8.42\ho  t + t^2)\hp.
\end{eqnarray*}}
A {\em mathematica} plot of this representation is shown in the following figure:
\begin{figure}[H]\label{fig3}
	\begin{center}
		\includegraphics[scale=0.32]{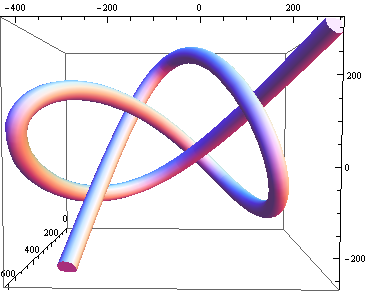}
		\caption{Representation of $4_1$}
	\end{center}
\end{figure}
By proposition \ref{thm5}, it follows that the polynomial degree of the figure-eight knot is $6$. Note that in the polynomial representation of this knot, the degrees $4$ and $5$ of the first and second components are minimal in the sense that there is no  polynomial representation of the figure-eight knot belonging to the space $\ps\setminus\pst$. By corollary \ref{cor2}, the space $\pst$ has at least $8$ path components corresponding to the figure-eight knot. Also, the knots $0_1, 3_1$ and $3_1^*$ can be realized in $\pst$ (since they have representation in $\pft$). The unknot $0_1$ corresponds to at least $8$ path components of the space $\pst$. The right hand trefoil $3_1$ and the left hand trefoil $3_1^*$, each corresponds to at least $4$ path components of the space $\pst$. Hence the space $\pst$ has at least $24$ path components. We summarize the details in the table bellow:\vskip5mm
\begin{center}
	\begin{tabular}[H]{|p{5mm}| p{12mm} | p{33mm}| p{57mm}|}
		\hline Sr. No. & Knot type & Polynomial degree of a knot type & Number of path components of $\pst$ corresponding to a knot type\\ \hline
		$1.$ &$0_1$ & 1 & at least 8\\ 
		$2.$ &$3_1$& 5 & at least 4\\ 
		$3.$ &$3_1^*$& 5 & at least 4\\ 
		$4.$ &$4_1$& 6 & at least 8\\ \hline
		\multicolumn{3}{|p{60mm}|}{Number of path components of $\pst$} & at least 24\\[2.5mm] \hline
	\end{tabular}
\end{center}

\subsection { The space $\psvt$}\label{sec3.5}

\noindent\hskip6mm By proposition \ref{thm2}, for a knot having a polynomial representation in degree $7$, the minimal crossing number must be less than or equal to $10$. In fact, we have produced some polynomial representations of the knots $5_1, 5_1^*, 5_2, 5_2^*, 6_1, 6_1^*, 6_2, 6_2^*, 6_3,$ $3_1\#3_1, 3_1^*\#3_1^*, 3_1\#3_1^*, 8_{19}$ and $8_{19}^*$ in the space $\psvt$.\vo 
\noindent 1) A polynomial representation $t\mapsto\uvwt$ of the knot $5_1$ with degree sequence $(5, 6, 7)$ is given by{\footnotesize 
\begin{eqnarray*}
u(t)&=& 0.00001\,t^5 + 4\ho  (-24.01 + t^2)\ho  (-4 + t^2)\hp,\\
v(t)&=& 0.00001\,t^6 + t\ho  (-30.25 + t^2)\ho  (-12.25 + t^2)\hw\mbox{and}\\
w(t)&=& -\ho  0.1\ho  t\ho  (-26.8328 + t^2)\ho  (-13.6702 + t^2)\ho (0.1135 + t^2)
\end{eqnarray*}}
\begin{figure}[H]
	\begin{center}
		\includegraphics[scale=0.32]{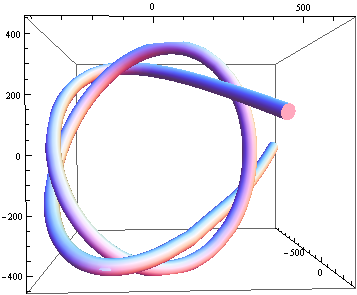}
		\caption{Representation of $5_1$}
	\end{center}
\end{figure}
\noindent 2) A polynomial representation $t\mapsto\xyzt$ of the knot $5_2$ with degree sequence $(5, 6, 7)$ is given by{\footnotesize 
\begin{eqnarray*} 
x(t)&=& 0.00001\,t^5 + 20\ho(-17 + t)\ho(-10 + t)\ho(15 + t)\ho(21 + t)\hp,\\
y(t)&=& 0.00001\,t^6 + t\ho  (-400 + t^2)\ho  (-121 + t^2)\hw\mbox{and}\\
z(t)&=& -\ho  0.005\ho  t\ho  (-20.1133216 + t)\ho (0.0107598 - 0.0343124 \ho   t + t^2)\\ 
    & & (12.2430449 + t)\ho  (20.5785825 + t)\ho (-14.260128 + t)\hp.
\end{eqnarray*}}
\begin{figure}[H]
	\begin{center}
		\includegraphics[scale=0.32]{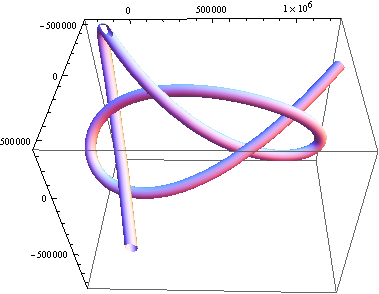}
		\caption{Representation of $5_2$}
	\end{center}
\end{figure}
\noindent 3) A polynomial representation $t\mapsto\fght$ of the knot $6_1$ with degree sequence $(5, 6, 7)$ is given by{\footnotesize  
\begin{eqnarray*} 
f(t)&=& 60\ho  (-43.4 + t)\ho  (-28 + t)\ho  (5 + t)\ho  (31.4 + t)\ho  (47.6 + t)\hp,\\
g(t)&=& (-49 + t)\ho  (-38 + t)\ho  (-8 + t)\ho  (-6 + t)\ho  (28 + t)\ho  (43.6 + t)\hw\mbox{and}\\
h(t)&=& -\ho  0.07\ho  (-45.995024874 + t)\ho  (5.231021635 + t) \ho (758.763745443 - 54.4650519227\ho  t + t^2)\\ & & (19.036560084 + t)\ho (2059.948386689 + 90.4819595699\ho  t + t^2)\hp.
\end{eqnarray*}}
\begin{figure}[H]
	\begin{center}
		\includegraphics[scale=0.32]{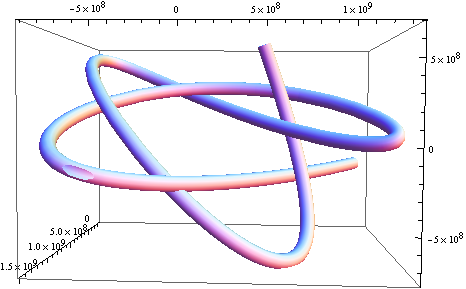}
		\caption{Representation of $6_1$}
	\end{center}
\end{figure}
\noindent 4) A polynomial representation $t\mapsto\uvwt$ of the knot $6_2$ with degree sequence $(5, 6, 7)$ is given by{\footnotesize 
\begin{eqnarray*}
u(t)&=& 4\ho  (-39 + t)\ho  (-5 + t)\ho  (35 + t)\ho  (-625 + t^2)\hp,\\
v(t)&=& 0.1\ho  (-39 + t)\ho  (-30 + t)\ho  (-10 + t)\ho  (20 + t)\ho  (25 + t)\ho  (41 + t)\hw\mbox{and}\\
w(t)&=& 0.005\ho  t\ho  (-39.8753791 + t)\ho  (-27.4156408 + t)\ho  (28.436878 + t)\\ && (37.25572585 + t)\ho (0.002423881 - 0.005429486\ho  t + t^2)\hp.
\end{eqnarray*}}
\begin{figure}[H]
	\begin{center}
		\includegraphics[scale=0.32]{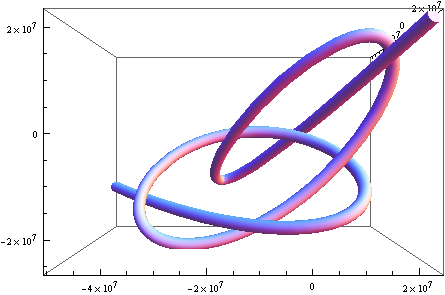}
		\caption{Representation of $6_2$}
	\end{center}
\end{figure}
\noindent 5) A polynomial representation $t\mapsto\xyzt$ of the knot $6_3$ with degree sequence $(5, 6, 7)$ is given by{\footnotesize 
\begin{eqnarray*}
x(t)&=& 15\ho  (-29 + t)\ho  (-20 + t)\ho  (10 + t)\ho  (30 + t)^2\hp,\\
y(t)&=& (-32 + t)\ho  (-6 + t)\ho  (4 + t)\ho  (30 + t)\ho  (-400 + t^2)\hw\mbox{and}\\
z(t)&=& -\ho  0.06\ho  (376.737563885 - 37.8892469397\ho  t + t^2)\ho (144.275534095 + 21.404400212\ho  t + t^2)\\ & & (-33.329044815 + t)\ho (955.985733648 + 61.56649851\ho  t + t^2)\hp. 
\end{eqnarray*}}
\begin{figure}[H]
	\begin{center}
		\includegraphics[scale=0.32]{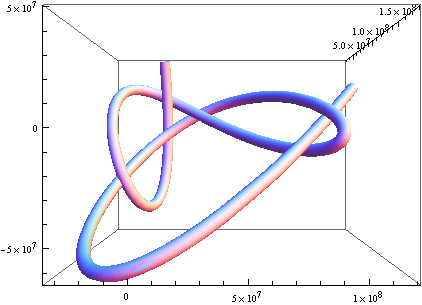}
		\caption{Representation of $6_3$}
	\end{center}
\end{figure}
\noindent 6) A polynomial representation $t\mapsto\fght$ of $3_1\#3_1$ knot with degree sequence $(5, 6, 7)$ is given by{\footnotesize  
\begin{eqnarray*}
f(t)&=& 5\ho t\ho (77.3 - 17.5\ho  t + t^2)\ho (77.3 + 17.5\ho  t + t^2)\hp,\\
g(t)&=& (-102.01 + t^2)\ho  (-53.29 + t^2)\ho  (-4.84 + t^2)\hw\mbox{and}\\
h(t)&=& -\ho  0.15\ho  t\ho  (-99.695462027 + t^2)\ho  (-68.11720396 + t^2)\ho  (0.025367747 + t^2)\hp.
\end{eqnarray*}}
\begin{figure}[H]
	\begin{center}
		\includegraphics[scale=0.32]{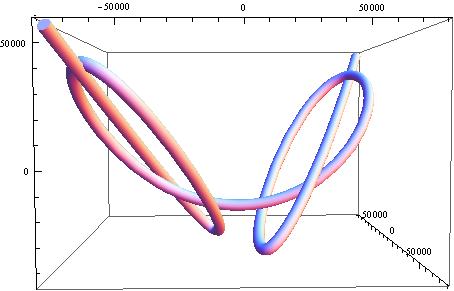}
		\caption{Representation of $3_1\#3_1$}
	\end{center}
\end{figure}
\noindent 7) A polynomial representation $t\mapsto\uvwt$ of $3_1\#3_1^*$ knot with degree sequence $(5, 6, 7)$ is given by{\footnotesize
\begin{eqnarray*}
u(t)&=& 30\ho  (-32.5 + t)\ho  (-21.3 + t)\ho  (-3.3 + t)\ho  (16.2 + t)\ho  (28 + t)\hp,\\
v(t)&=& (-34 + t)\ho  (-23 + t)\ho  (-6.8 + t)\ho  (12 + t)\ho  (21.7 + t)\ho  (33.1 + t)\hw\mbox{and}\\
w(t)&=& -\ho 0.03\ho  t\ho  (-32.807367 + t)\ho  (-24.209735 + t)\ho (15.257278 + t)\\ & & (28.289226 + t)\ho (0.0043718 - 0.0082068\ho  t + t^2)\hp.
\end{eqnarray*}}
\begin{figure}[H]
	\begin{center}
		\includegraphics[scale=0.32]{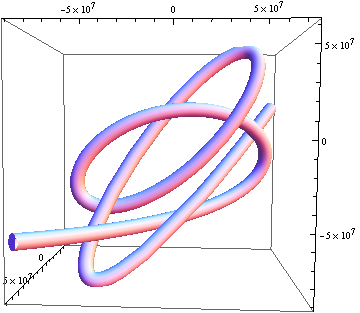}
		\caption{Representation of $3_1\#3_1^*$}
	\end{center}
\end{figure}
\noindent 8) A polynomial representation $t\mapsto\xyzt$ of the knot $8_{19}$ with degree sequence $(5, 6, 7)$ is given by{\footnotesize
\begin{eqnarray*}
x(t)&=& t^5 - 5.5\ho t^3 + 4.5\ho t\hp,\\
y(t)&=& t^6 - 7.35\ho t^4 + 14\ho t^2\hw\mbox{and}\\
z(t)&=& t^7 - 8.13297\ho t^5 + 18.5762\ho  t^3 - 10.4337\ho  t\hp.
\end{eqnarray*}}
\begin{figure}[H]
	\begin{center}
		\includegraphics[scale=0.35]{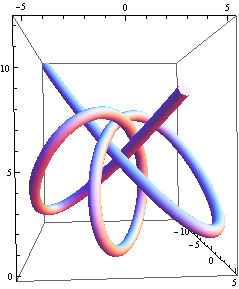}
		\caption{Representation of $8_{19}$}
	\end{center}
\end{figure}
Each of the knot $5_1, 5_1^*, 3_1\#3_1, 3_1^*\#3_1^*, 3_1\#3_1^*,$ $8_{19}$ and $8_{19}^*$ is $4$-superbridge, so by proposition \ref{thm2}, one can not represent any one by a polynomial knot with degree less than $7$. In other words, each has polynomial degree $7$. However, the polynomial degree the knots $5_2, 5_2^*, 6_1, 6_1^*, 6_2, 6_2^*$ and $6_3$ is either $6$ or $7$. For the polynomial representation of $8_{19}$, the degrees $5$ and $6$ of the first and second components are minimal in the sense that there is no representation of the knot $8_{19}$ belonging to the space $\psv\setminus\psvt$. The space $\psvt$ has at least $88$ path components corresponding to the knots $0_1, 3_1, 3_1^*,4_1, 5_1, 5_1^*, 5_2, 5_2^*,6_1, 6_1^*, 6_2, 6_2^*, 6_3, 3_1\#3_1, 3_1^*\#3_1^*, 3_1\#3_1^*, 8_{19}$ and $8_{19}^*$. A comprehensive table of an estimation of the number of path components of the space $\psvt$ is given below:\vskip5mm
\begin{center}
	\begin{tabular}[H]{|p{5mm}| p{12mm} | p{33mm}| p{57mm}|}
		\hline Sr. No. & Knot type & Polynomial degree of a knot type & Number of path components of $\psvt$ corresponding to a knot type\\ \hline
		$1.$ &$0_1$ & 1 & at least 8\\
		$2.$ &$3_1$& 5 & at least 4\\
		$3.$ &$3_1^*$& 5 & at least 4\\
		$4.$ &$4_1$& 6 & at least 8\\
		$5.$ &$5_1$& 7 & at least 4\\
		$6.$ &$5_1^*$& 7 & at least 4\\
		$7.$ &$5_2$& 6 or 7 & at least 4\\ 
		$8.$ &$5_2^*$& 6 or 7 & at least 4\\
		$9.$ &$6_1$& 6 or 7 & at least 4\\
		$10.$ &$6_1^*$& 6 or 7 & at least 4\\
		$11.$ &$6_2$& 6 or 7 & at least 4\\ 
		$12.$ &$6_2^*$& 6 or 7 & at least 4\\ 
		$13.$ &$6_3$& 6 or 7 & at least 8\\ 
		$14.$ &$3_1\#3_1$& 7 & at least 4\\ 
		$15.$ &$3_1^*\#3_1^*$& 7 & at least 4\\ 
		$16.$ &$3_1\#3_1^*$& 7 & at least 8\\ 
		$17.$ &$8_{19}$& 7 & at least 4\\ 
		$18.$ &$8_{19}^*$& 7 & at least 4\\ \hline
		\multicolumn{3}{|p{60mm}|}{Number of path components of $\psvt$} & at least 88\\[2.5mm]\hline
	\end{tabular}
\end{center}

\section{Conclusion}\label{sec4}

\noindent\hskip6mm  We have seen that the space $\pfr$ is path connected where as the space $\pfrt$ has eight path components. Thus, it makes a difference if we are considering a space with fixed degrees of components polynomials or with a flexible range of degrees.  We also see that the space $\spk$ of all polynomial knots can be stratified in two different ways such as $$\spk=\bigcup _{d\geq 1} K_d=\bigcup _{d\geq 2} O_d,$$  where $\od$ is the space of all polynomial knots $t\mapsto (f(t),g(t),h(t))$ with $\deg(f)\leq d-2$, $\deg(g)\leq d-1$ and $\deg(h)\leq d.$ We can provide the inductive limit topology using any of these stratifications. It needs to be observed that the number of path components in these resulting spaces are different. For a fixed $d$, there are many more interesting spaces of polynomial knots with conditions on the degrees of the component polynomials. Each one of them gives interesting topology. One may try to find a suitable topology on $\spk$ such that its path components correspond precisely to  knot types.  In general  we might like to study the spaces $\mathcal{K}_{p,q,r}$  as the set of all polynomial knots $t\to (f(t),g(t),h(t))$ with $\deg(f)=p,\deg(g)=q, \deg(h)=r$ where $p,q$ and $r$ are any given positive integers.  We would like to explore if the number of path components in $\mathcal{K}_{p,q,r}$ corresponding to a knot type gets affected in case $(p,q,r)$ is the minimal degree sequence for a given knot type. 



\begin{thebibliography}{99}
	
	\bibitem{ars} A. R. Shastri, Polynomial representations of knots, T\^{o}hoku Math. J. (2), Vol. 44, No. 1 (1992), 11-17.
	
	\bibitem{do} Alan Durfee and Donal O'Shea, Polynomial knots, http://arxiv.org/pdf/math\\/0612803v1.pdf
	
	\bibitem{jj1} C. B. Jeon and G. T. Jin, A computation of superbridge index of knots, Journal of Knot Theory and Its Ramifications, Vol. 11, No. 3 (2002), 461-473.
	
	\bibitem{jj2} C. B. Jeon and G. T. Jin, There are only finitely many 3-superbridge knots, Journal of Knot Theory and Its Ramifications, Vol. 10, No. 2 (2001), 331-343.
	
	\bibitem{ca} Colin Adams and 2007 SMALL Research Group, Superbridge number of knots, Preprint, 2007.
	
	\bibitem{dp} D. Pecker, Sur le th\'{e}or\`{e}me local de Harnack, C. R. Acad. Sci. Paris, Vol. 326, Series 1, 1998, 573-576.
	
	\bibitem{mcf} Donovan McFeron, The minimal degree sequence of the polynomial figure eight knot (REU 2002).
	
	\bibitem{jw} Jacob Wagner, Geometric degree of $2$-bridge knots, http://library.williams.\\edu/theses/pdf.php?id=481
	
	\bibitem{nk} Nicolaas H. Kuiper, A new knot invariant, Mathematische Annalen, Vol. 278 (1987), 193-209.
	
	\bibitem{na1} Norbert A'Campo, Le groupe de monodromie du d\'{e}ploiement des singularit\'{e}s isol\'{e}es de courbes planes I, Mathematische Annalen, Vol. 213 (1975), 1-32.
	
	\bibitem{na2} Norbert A'Campo, Singularities and related knots, University of Tokyo, Notes by William Gibson and Masaharu Ishikawa.
	
	\bibitem{pm1} P. Madeti and R. Mishra, Minimal degree sequence for 2-bridge knots, Fundamenta Mathematicae, 190 (2006) 191-210.
	
	\bibitem{pm2} P. Madeti and R. Mishra, Minimal degree sequence for torus knots of type $(p, 2p-1)$, Journal of Knot Theory and Its Ramifications, Vol. 15, No. 9 (2006), 1141-1151.
	
	\bibitem{pm3} P. Madeti and R. Mishra, Minimal degree sequence for torus knots of type $(p, q)$, Journal of Knot Theory and Its Ramifications, Vol. 18, No. 4 (2009), 485-491. 
	
	\bibitem{pm4} P. Madeti and R. Mishra, Polynomial representation of long knots, International Journal of Mathematical Analysis, Vol. 3, No. 7 (2009), 325-337. 
	
	\bibitem{pk} Peter Kim, Lee Stemkoski and C. Yuen, Polynomial knots of degree five, MIT Undergraduate Journal of Mathematics, Vol. 3 (2001). 
	
	\bibitem{rm1} R. Mishra, Minimal degree sequence for torus knots, Journal of Knot Theory and Its Ramifications, Vol. 9, No. 6 (2000), 759-769.
	
	\bibitem{rm2} R. Mishra, Polynomial representation of strongly invertible knots and strongly negative amphicheiral knots, Osaka Journal of Mathematics, Vol. 43, No. 3 (2006), 625-639.
	
	\bibitem{rm3} R. Mishra, Polynomial representation of torus knots of type {$(p,q)$}, Journal of Knot Theory and Its Ramifications, Vol. 8, No. 5 (1999), 667-700.
	
	\bibitem{rs1} R. Shukla, On polynomial isotopy of knot-types, Proc. Indian Acad. Sci. (Math. Sci.), Vol. 104, No. 3 (1994), 543-548.
	
	\bibitem{rs2} R. Shukla and A. Ranjan, On polynomial representation of torus knots, Journal of Knot Theory and Its Ramifications, Vol. 5, No. 2 (1996), 279-294.
	
	\bibitem{rb} Riccardo Benedetti and Jean-Jacques Risler, Real algebraic and semi-algebraic sets, Hermann \'{E}diteurs Des Sciences Et Des Arts, 1990.
	
	\bibitem{ssa} S. S. Abhyankar, On the semigroup of meromorphic curves, Proceedings of the International Symposium of Algebraic Geometry-Part I, Kyoto, 1977, 240-414.
	
	\bibitem{va1} V. A. Vassiliev, Cohomology of knots spaces, Theory of singularities and its applications (V. I. Arnold, ed.), Advances In Soviet Maths, Vol. 1, 1990, 23-69 (AMS, Providence, RI). 
	
	\bibitem{va2} V. A. Vassiliev, On spaces of polynomial knots, Sbornik: Mathematics, Vol. 187, No. 2 (1996), 193-214.
	
\end{thebibliography}
\end{document}